\newtheorem{ThmIntro}{Theorem}
\newtheorem{PropIntro}[ThmIntro]{Proposition}
\newtheorem{thm}{Theorem}[section]
\newtheorem{cor}[thm]{Corollary}
\newtheorem{lem}[thm]{Lemma}
\newtheorem{prop}[thm]{Proposition}
\theoremstyle{definition}
\newtheorem{defn}[thm]{Definition}
\theoremstyle{remark}
\numberwithin{equation}{section}
\newcommand{\Z}{\mathbb{Z}}
\newcommand{\F}{\mathbb{F}}
\newcommand{\N}{\mathbb{N}}
\newcommand{\C}{\mathbb{C}}
\newcommand{\SL}{\text{SL}}
\newcommand{\HH}{\mathcal{H}}
\newcommand{\PP}{\mathcal{P}}
\newcommand{\eps}{\varepsilon}
\newcommand{\Lip}{\textnormal{Lip}}
\newcommand{\diam}{\textnormal{diam}}
\begin{document}
\title{Relative expanders}

\author{Goulnara Arzhantseva}
\address{Universit\"at Wien, Fakult\"at f\"ur Mathematik\\
Oskar-Morgenstern-Platz 1, 1090 Wien, Austria.}
\email{goulnara.arzhantseva@univie.ac.at}

\author{Romain Tessera}
\address{Laboratoire de Math\'ematiques, B\^atiment 425\\ Universit\'e Paris-Sud 11, 91405 Orsay, France
}
\email{romain.tessera@math.u-psud.fr}
\date{}
\subjclass[2010]{46B85, 20F69, 22D10, 20E22}
\keywords{Relative Kazhdan's property (T), Haagerup property, Gromov's a-T-menability, expander, box space.}

\thanks{The research of G.A.\ was partially supported by the ERC grant ANALYTIC no.\ 259527. The research of R.T.\ was partially supported by 
the ANR Blanc ANR-10-BLAN 0116, acronym GGAA} 
\baselineskip=16pt

\begin{abstract}
We exhibit a finitely generated  group $G$ and a sequence of finite index normal subgroups 
$N_n\trianglelefteqslant G$ such that  for every finite generating subset $S\subseteq G$, the sequence of finite Cayley graphs $(G/N_n, S)$ does not coarsely embed 
into any $L^p$-space for $1\leqslant p<\infty$ (moreover, into any uniformly curved Banach space), and
yet admits no weakly embedded expander. The reason why our examples do not coarsely embed is a new phenomenon called relative expansion, which we define in terms of Poincar\'e inequalities. \end{abstract}
\maketitle

\section{Introduction}
 A well-known obstruction for a metric space to coarsely embed into a Hilbert space is to admit a weakly embedded expander~\cites{M,Gr_sp,Gr_rw}. In this paper, we address 
 the following question:\smallskip

\emph{Given a metric space which does not embed coarsely into a Hilbert space, does it necessary contain a weakly embedded expander?}\smallskip

As a metric space weakly containing an expander does not coarsely embed into $\ell^p$ for all $1\leqslant p<\infty$, a counterexample is given by $\ell^p$ for any $2<p<\infty$ which does not coarsely embed into $\ell^2$ \cite{JR}  (see \cites{Ran,MN} for further results of the same kind).

However, the question remained open in the context of metric spaces with bounded geometry, and in particular for graphs with bounded degree (see \cite{O}*{Chapter 7}). We provide a strongly negative answer by constructing a sequence of $10$-regular finite graphs that does not coarsely embed into any $L^p$-space for any $1\leqslant p<\infty$, 
nor into any uniformly curved Banach space,
and yet does not admit any weakly embedded expander.

Before stating more precise results, we recall a few definitions.

\noindent{\bf Coarse embedding.}
Let $(X_n,d_n)_{n\in \N}$ be a sequence of metric spaces and let $(Y, d_Y)$ be a metric space.
A sequence of maps $\phi_n\colon X_n\to Y$ is a \emph{ coarse embedding of $(X_n)_{n\in \N}$ into $Y$} if there exist two 
proper functions $\rho, \gamma\colon [0,\infty)\to [0,\infty)$ such that for all $n\in \N$ and all $x,y\in X_n$,
$$\rho(d_n(x,y))\leqslant d_Y\left(\phi_n(x),\phi_n(y)\right)\leqslant \gamma(d_n(x,y)).$$

\noindent{\bf Expander.}
Given a finite connected graph $X$ with $\vert X\vert$ vertices and a subset $A\subseteq X$, denote by $\partial A$ the set of edges between $A$ and $X\setminus A$. 
The \emph{Cheeger constant} of $X$ is defined as $$h(X):=\min_{1\leqslant |A|\leqslant \vert X\vert/2}\frac{|\partial A|}{|A|}.$$ 

An \emph{expander} is a sequence $(X_n)_{n\in\mathbb{N}}$  of finite connected graphs with uniformly bounded degree, $\vert X_n\vert\to\infty$ as $n\to \infty$,  and
$h(X_n)\geqslant c$ uniformly over $n\in \mathbb{N}$ for some constant $c>0$.\smallskip

\noindent{\bf Weakly embedded sequence of finite metric spaces.}
This notion is used by Gromov \cite{Gr_rw} in his (random) construction of Gromov's monsters: finitely generated groups that do not coarsely embed into a Hilbert space. 

Let $(X_n)_{n\in \mathbb N}$ be a sequence of finite metric spaces  and let $Y$ be a metric space. A sequence of maps $\phi_n\colon X_n\to Y$ is a \emph{weak embedding} 
if there exists $K>0$ such that $\phi_n$ are $K$-Lipschitz, and for all $R>0$, 
 \begin{equation}\label{eq:weak}
 \lim_{n\to \infty}\sup_{x\in X_n}\frac{|\phi_n^{-1}(B(\phi_n(x),R))|}{|X_n|}=0, 
\end{equation}
where $B(y,R)$ denotes the ball of radius $R$ centered at $y\in Y$.

If $(X_n)_{n\in \mathbb N}$ satisfies that for all $R>0$, $\sup_{n, x \in X_n}|B(x,R)|<\infty$,  and $|X_n|\to \infty$ as $n\to\infty$, then a coarse embedding of 
the sequence $(X_n)_{n\in \mathbb N}$ into $Y$ is a weak embedding. In particular, a coarsely embedded expander is a weakly embedded expander. 
It is an open question whether the converse holds: does the existence of a weakly embedded expander imply the existence of a coarsely embedded expander?

On the other hand, if the target space $Y$ is such that for every $R>0$ we have $\sup_{y\in Y}|B(y,R)|<\infty$ (for instance, if $Y$ is the vertex set of a graph with bounded degree), then (\ref{eq:weak}) is equivalent to the following, see \cite{GK}*{Definition 5.4}:  $$\lim_{n\to \infty}\sup_{x\in X_n}\frac{|\phi_n^{-1}(\phi_n(x))|}{|X_n|}=0.$$ 
\smallskip

\noindent{\bf Box space.} 
Let $G$ be a finitely generated residually finite group and $S$ a finite generating subset.
The \emph{box space} of $G$, associated to $S$ and to a nested sequence of finite index normal subgroups
$G=N_0\trianglerighteqslant N_1 \trianglerighteqslant \cdots$ with trivial intersection 
$\bigcap_{n=0}^{\infty}N_n=\{ e \}$, is the sequence\footnote{Usually, the box space is defined to be a single metric space $X:=\sqcup^{\infty}_{n=1}X_n$, where the distance $d_{n,m}$ between $X_n$ and $X_m$, for $m\neq n$, is so that $d_{n,m}\to \infty$ as $\max\{n,m\}\to \infty$. One can easily check that a coarse (respectively, weak) embedding of such an individual box metric space $X$ 
provides  a coarse (respectively, weak) embedding of  the above sequence 
$(X_n, d_n)_{n\in \mathbb N}$, and vice versa.}  $(X_n)_{n\in \mathbb N}$ of Cayley graphs
$(X_n,d_n):= (G/N_n, d_{S_n}),$ where each $S_n$ is $S$ modulo $N_n$.

\

Our main theorem is the following. Recall that an $L^p$-space is a Banach space of the form $L^p(\Omega,\nu),$ where $(\Omega,\nu)$ is a measured space.

\begin{ThmIntro}\label{ThmIntro:MainExample}(Theorem \ref{thm:SL2})
There exist a finitely generated residually finite group $G$ and a box space $(Y_n)_{n\in \mathbb N}$ of $G$  
which does not coarsely embed into any $L^p$-space for $1\leqslant p<\infty$, neither into any uniformly curved Banach space, and yet does not admit any sequence of weakly embedded expanders.
\end{ThmIntro}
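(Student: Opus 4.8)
The plan is to build the group $G$ and the subgroups $N_n$ so that the box space has two competing features that are usually hard to reconcile: a relative Kazhdan-type property forcing non-embeddability into $L^p$-spaces, while simultaneously lacking the "spreading" of mass that an expander requires. Given the keywords (relative property (T), Haagerup/a-T-menability), I would look for a group of the form $G = H \rtimes \Gamma$ where the pair $(G, H)$ has relative property (T) — the prototype being $\mathbb{Z}^2 \rtimes \mathrm{SL}_2(\mathbb{Z})$, or a suitable wreath-type or semidirect construction — but where $\Gamma$ itself is amenable (or more precisely a-T-menable, indeed Haagerup). The relative property (T) of the pair is what one exploits to defeat coarse embeddings, and the amenability/a-T-menability of the quotient direction is what prevents a genuine expander from appearing.

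First I would make precise the obstruction to embedding. The standard mechanism is that relative property (T) of $(G,H)$ translates, at the level of the finite quotients $G/N_n$, into a \emph{relative expansion} phenomenon: the restriction of the averaging/Markov operator to the subspace of functions supported on the $H$-direction has a uniform spectral gap. Concretely, I would show that for any equivariant (or, after a standard averaging argument, any $1$-Lipschitz) map $\phi_n$ into an $L^p$-space, the relative property (T) forces the distortion of $\phi_n$ restricted to cosets of the image of $H$ to blow up, via a Poincaré-type inequality $\sum_{x} \|\phi_n(x) - \phi_n(sx)\|^p \geqslant c \sum_x \|\phi_n(x) - \mathrm{mean}\|^p$ on that subspace. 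Since relative property (T) survives passage to $L^p$ for all $1 \leqslant p < \infty$ (Bader–Fisher–Monod–Nowak type results) and to uniformly curved Banach spaces, the same argument rules out embeddings into all those targets at once; this is where I would invoke the known strengthenings of relative (T) rather than reprove them.

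The harder and more delicate half is the \emph{absence} of a weakly embedded expander. Here the strategy is to exploit that the obstruction above is only \emph{relative}: the spectral gap lives on a thin subspace (functions varying along the $H$-fibers), while in the transverse $\Gamma$-direction the geometry is amenable, so mass does \emph{not} spread. Concretely, I would construct an explicit sequence of maps $\psi_n \colon G/N_n \to Y$ (with $Y$ a fixed graph of bounded degree, or $\mathbb{R}$, or a tree) coming from the quotient structure $G/N_n \to \Gamma/(\text{image})$, and verify that the preimage condition \eqref{eq:weak} fails to persist — i.e. that any putative weakly embedded expander, pulled back through the fibration onto the amenable $\Gamma$-direction, would contradict amenability (a Følner / isoperimetric argument showing $h(X_n) \to 0$ in the transverse direction). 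The key structural point is that the fibers $H N_n / N_n$ have size growing slower than $|G/N_n|$, or more precisely that the box space "looks amenable at large scale" after collapsing the relatively-expanding fibers, so no bounded-degree subgraph with uniform Cheeger constant can be weakly embedded.

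The main obstacle I anticipate is quantitative compatibility: I must choose $G$ and the filtration $(N_n)$ so that the relative spectral gap is uniform in $n$ (needed for non-embeddability) while the transverse isoperimetry degenerates at exactly the right rate (needed to kill weak expanders), and then rule out \emph{all} weak embeddings rather than just the fiber-respecting ones. Ruling out a weak expander is genuinely stronger than ruling out a genuine (coarsely embedded) expander, since \eqref{eq:weak} only controls preimages of balls; the decisive estimate will be showing that for every $K$-Lipschitz sequence into an expander, the mass-concentration forced by the amenable transverse direction violates the uniform Cheeger lower bound, and assembling this into a clean contradiction is where the real work lies.
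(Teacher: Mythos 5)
Your proposal gets the right group (a relative property (T) pair $\Z^2\rtimes \Gamma$ with $\Gamma$ Haagerup, essentially the paper's $G=\Z^2\rtimes_Q\F_3$) and the right mechanism for the embeddability half, but the mechanism you propose for the harder half — no weakly embedded expander — is wrong, and it hides the actual content of the construction. You claim that ``the amenability/a-T-menability of the quotient direction is what prevents a genuine expander from appearing.'' This is false: whether the box space contains expanders is a property of the chosen filtration $(N_n)$, not of the group $\Gamma$. Concretely, take your own prototype $G'=\Z^2\rtimes \SL(2,\Z)$ (with $\SL(2,\Z)$ Haagerup, virtually free) and the congruence filtration, so that the box space is $(\Z/m_n\Z)^2\rtimes \SL(2,\Z/m_n\Z)$. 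The subgroups $\SL(2,\Z/m_n\Z)$, with the projected generators, form an expander by Selberg's theorem, and their inclusions into the box space are injective and $1$-Lipschitz, hence give a weakly (indeed coarsely) embedded expander. So no F\o lner/isoperimetric argument ``in the transverse direction'' can possibly work from the Haagerup property of $\Gamma$ alone; also note that your statement ``$h(X_n)\to 0$'' cannot be the goal, since $X_n$ is the putative expander and its Cheeger constant is bounded below by hypothesis — what must be contradicted is the weak-embedding condition \eqref{eq:weak} itself.

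What the paper actually does, and what your proposal is missing, is the choice of a filtration for which \emph{both} the fiber sequence and the quotient sequence of finite graphs coarsely embed into a Hilbert space: the fibers $(\Z/2^n\Z)^2$ embed uniformly because they are abelian (this is where amenability really enters — for the fibers, not the transverse direction), while for the quotient direction one takes the Arzhantseva--Guentner--\v{S}pakula box space $H_n=\F_3/\Gamma_n(\F_3)$ of the free group, which coarsely embeds into Hilbert space, and uses the factorization lemma (every $2$-group of order $2^n$ is a quotient of $H_n$) to make the congruence $2$-groups $Q_n\leqslant \SL(2,\Z/2^n\Z)$ factor through $H_{3n-3}$, so that $(\Z/2^n\Z)^2\rtimes_{Q_n}H_{3n-3}$ is a genuine box space of $\Z^2\rtimes_Q\F_3$. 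Given this, the concentration argument (Proposition \ref{PropIntro:exactsequence}) runs: compose a $K$-Lipschitz map from an expander with the coarse embedding of the quotient sequence; the expander Poincar\'e inequality forces a definite proportion of $X_n$ into a single coset of the fiber; then repeat with the coarse embedding of the fibers to crush a definite proportion onto a single point, contradicting \eqref{eq:weak}. Separately, your treatment of non-embeddability into $L^p$ and uniformly curved spaces by citing that relative (T) ``survives passage to $L^p$'' is too quick: the paper has to prove this (Propositions \ref{prop:poincareLp} and \ref{prop:unifcurved}) via the Lafforgue--Pisier complex interpolation method applied to the commuting operators $M_S$ and $M_{q(H)}$ on finite quotients, which requires $H$ normal; for uniformly curved targets no off-the-shelf relative-(T) statement exists — the interpolation-type argument is essentially the definition of that class.
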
 
Let us briefly outline our first example, the precise construction and its variants being given in~\S~\ref{sec:construc}. 

The rough idea is to use the fact that 
$\Z^2\rtimes \SL(2,\Z)$ has Kazhdan's Property~T relative to $\Z^2$, to deduce that any sequence of quotients $(Q_n)_{n\in \mathbb N}$, whose kernels have trivial intersection, does not coarsely embed into a Hilbert space. It is natural to start with the sequence $(\Z/n\Z)^2\rtimes \SL(2,\Z/n\Z)$, ${n\in \mathbb N}$. However, this sequence is not satisfactory as $\SL(2,\Z/n\Z)$, ${n\in \mathbb N}$, is well-known to be itself an expander. To outcome this major difficulty, our idea is to replace $\SL(2,\Z/n\Z)$ by an appropriately chosen finite extension of it. Namely,
we choose a suitable sequence of finite extensions that coarsely embeds into a Hilbert space, as provided by \cite{AGS}. This choice also 
guarantees that the resulting sequence admits no weakly embedded expander. 
This is about the main idea. In building our example, some technicalities appear, therefore, the actual details of the construction are more involved.

Theorem \ref{ThmIntro:MainExample} relies on the two main observations, of independent interest.

\subsection{Expanders and group extensions}
The following proposition roughly says that there are no expanders in an extension of two groups which admit coarse embeddings into a 
Hilbert space. It is an open question whether or not the coarse embeddability into a Hilbert space is preserved under taking extensions.

\begin{PropIntro}\label{PropIntro:exactsequence}
Let $(G_n)_{n\in\mathbb{N}}$ be a sequence of finitely generated groups equipped with finite generating sets $S_n$ of size $k$.
 We assume that for every $n$, there is an exact sequence 
$$1\to N_n\to G_n\to Q_n\to 1$$
such that:
\begin{itemize}
\item the sequence $(N_n)_{n\in\mathbb{N}}$ equipped with the induced metric coarsely embeds into a Hilbert space;

\item the sequence  $(Q_n)_{n\in\mathbb{N}}$ equipped with the word metric associated to the projection $T_n$ of $S_n$  coarsely embeds into a Hilbert space.  
\end{itemize}
Then, given a number $K>0$,  an expander $(X_n)_{n\in\mathbb{N}}$, and a sequence of $K$-Lipschitz maps $h_n\colon X_n\to Y_n=(G_n,S_n)$, there exist a constant $c>0$ and a sequence $y_n\in Y_n$ such that the cardinality of $h_n^{-1}(\{y_n\})$ is at least $c|X_n|$, and its diameter is $\geqslant c\; \diam(X_n)$.
\end{PropIntro}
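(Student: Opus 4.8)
The plan is to exploit the spectral gap of the expander twice, first in the quotient direction and then in the normal direction, each time converting a coarse embedding into a concentration statement through the Poincaré inequality. Throughout, write $D$ for the common degree bound and $\Lambda>0$ for a uniform lower bound on the spectral gaps of the $X_n$ (available since $h(X_n)\geqslant c$ forces a uniform gap by Cheeger), and recall the ensuing Poincaré inequality: for every $f\colon X_n\to\mathcal H$ into a Hilbert space,
\[
\frac{1}{|X_n|^2}\sum_{x,x'\in X_n}\|f(x)-f(x')\|^2 \;\leqslant\; \frac{1}{\Lambda |X_n|}\sum_{x\sim x'}\|f(x)-f(x')\|^2 .
\]
Since discarding nonnegative terms only decreases the left sum, restricting it to $A\times A$ for a subset $A\subseteq X_n$ of density $|A|/|X_n|\geqslant\delta$ yields the \emph{subset} inequality $\tfrac{1}{|A|^2}\sum_{A\times A}\|f(x)-f(x')\|^2\leqslant \tfrac{1}{\delta^2}\,\tfrac{1}{\Lambda|X_n|}\sum_{x\sim x'}\|f(x)-f(x')\|^2$, which lets me work on positive-density subsets that are themselves not expanders.

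First I would localise in $Q_n$. The quotient map $\pi_n\colon G_n\to Q_n$ is $1$-Lipschitz for the chosen generating sets; let $\phi_n\colon Q_n\to\mathcal H_Q$ be the given coarse embedding with controls $\rho_Q,\gamma_Q$. Applying the Poincaré inequality to $\phi_n\circ\pi_n\circ h_n$, adjacent vertices of $X_n$ have $G_n$-images at distance $\leqslant K$, hence $Q_n$-images at distance $\leqslant K$, so each edge contributes at most $\gamma_Q(K)^2$ and the right-hand side is $\leqslant D\gamma_Q(K)^2/\Lambda$, uniformly in $n$. Feeding the lower control $\rho_Q$ into the left-hand side and using Markov's inequality, a definite fraction of all pairs $(x,x')$ satisfy $d_{Q_n}(\pi_n h_n(x),\pi_n h_n(x'))\leqslant R_0$ for a fixed $R_0$ (finite because $\rho_Q$ is proper). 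Averaging over $x$ produces a single vertex whose $Q_n$-image $R_0$-controls a positive-density subset; and since a ball of radius $R_0$ in $Q_n$ contains at most $(1+2k)^{R_0}=:M$ points, a further pigeonhole over these boundedly many points isolates one coset $g_0N_n=\pi_n^{-1}(q^\ast)$ and a subset $A\subseteq X_n$ with $|A|\geqslant \tfrac{1}{2M}|X_n|$ and $h_n(A)\subseteq g_0N_n$.

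Next I would localise in $N_n$. On the coset, left translation by $g_0^{-1}$ is a $G_n$-isometry carrying $g_0N_n$ onto $N_n$ with its induced metric, so $\psi_n:=g_0^{-1}h_n\colon A\to N_n$ satisfies $d_{N_n}(\psi_n(x),\psi_n(x'))=d_{G_n}(h_n(x),h_n(x'))$. Composing with the embedding of $(N_n)$—which I may assume Lipschitz with a uniform constant $L$ and lower control $\rho_N$, since a coarse embedding of a bounded-geometry sequence can be upgraded to a Lipschitz one—gives an $L$-Lipschitz map on the coset, bounded below by $\rho_N$. \emph{Here lies the main obstacle}: to run the Poincaré inequality on the full expander I need this map defined and edge-controlled on all of $X_n$, yet $G_n$ need not embed, and the naive extension by a coset section is wildly non-Lipschitz across coset boundaries. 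I would resolve this by invoking the Lipschitz extension property of Hilbert space (Kirszbraun, Ball--Lee--Naor): the $L$-Lipschitz map on $g_0N_n\subseteq G_n$ extends to a $C_0L$-Lipschitz map $\Phi_n\colon G_n\to\mathcal H_N$ with $C_0$ universal. Then $\Phi_n\circ h_n$ is $C_0LK$-edge-Lipschitz, and the subset inequality on $A$ bounds $\tfrac{1}{|A|^2}\sum_{A\times A}\rho_N(d_{G_n}(h_n(x),h_n(x')))^2$ by a uniform constant. Repeating the Markov-plus-pigeonhole step—now using that a ball of radius $R_1$ in $G_n$ has at most $(1+2k)^{R_1}$ points—produces a single $y_n\in g_0N_n$ whose $h_n$-preimage has density at least a uniform $c>0$.

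Finally, the diameter bound is automatic once the fibre has density $\geqslant c$: a set contained in a ball of radius $r$ in a degree-$D$ graph has at most $(1+D)^{r}$ points, so $h_n^{-1}(\{y_n\})$ has diameter $\geqslant \log_D(c|X_n|)-1$; since expanders have logarithmic diameter, $\diam(X_n)\leqslant C_1\log|X_n|$, and comparing the two gives $\diam(h_n^{-1}(\{y_n\}))\geqslant c'\,\diam(X_n)$ for a uniform $c'>0$. Shrinking $c$ to $\min(c,c')$ delivers both conclusions at once. The one genuinely delicate point is the Hilbert-space Lipschitz extension in the second phase, which is precisely what makes the argument go through even though coarse embeddability need not pass to the extension $G_n$.
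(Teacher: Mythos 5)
Your first localization step (concentrating $\pi_n\circ h_n$ onto a single coset $g_0N_n$ via the Poincar\'e inequality, Markov, and a pigeonhole over a ball of bounded cardinality in $Q_n$) and your closing diameter argument coincide with the paper's proof, so I focus on the second localization step, where there is a genuine gap. To run the expander inequality a second time you need a Hilbert-valued map that is edge-controlled on \emph{all} of $X_n$, while your embedding is only defined on $g_0N_n$; you propose to fix this by a Lipschitz extension theorem asserting that any $L$-Lipschitz map from the subset $g_0N_n$ of the metric space $(G_n,d_{S_n})$ into a Hilbert space extends to a $C_0L$-Lipschitz map on $G_n$ with $C_0$ \emph{universal}. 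No such theorem exists. Kirszbraun's theorem requires the source to be a subset of a Hilbert space; Ball's extension theorem requires the source to have Markov type $2$ with a constant independent of $n$, which the word metrics $(G_n,d_{S_n})$ have no reason to satisfy; and for subsets of arbitrary metric spaces the optimal extension constant into Hilbert space grows with the cardinality $m$ of the subset (it is of order $\sqrt{\log m}$ by Johnson--Lindenstrauss--Schechtman, and lower bounds show it cannot be bounded). Since $|N_n|\to\infty$, any correct constant $C_0(n)$ tends to infinity, and this cannot be absorbed: the concentration radius produced by the Poincar\'e inequality becomes $\rho_N^{-1}\bigl(O(C_0(n))\bigr)\to\infty$, the pigeonhole ball has cardinality growing without bound, and the density of the fibre you extract degenerates to $0$, destroying the uniform constant $c$.

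The paper closes exactly this gap with Lemma~\ref{lem:subsexpander} (a special case of \cite{BLMN}*{Lemma 5.4}), whose point is that the extension is performed inside the expander $X_n$, not inside $G_n$, and is \emph{not} Lipschitz: one extends $f'_n=\phi_n\circ h_n$ from the positive-density subset $A_n'$ to all of $X_n$ by nearest-point projection, $f(x):=f(a_x)$ with $a_x\in A_n'$ nearest to $x$. An edge at distance $i$ from $A_n'$ may then contribute as much as $(2i+4)^2$ to the Dirichlet energy, but since $A_n'$ has density $\geqslant\alpha$ and $h(X_n)$ is bounded below, the complement of the $i$-neighborhood satisfies
$$\bigl|[A_n']_i^c\bigr|\leqslant \frac{|X_n|}{(1+h_{\alpha}/d)^i},$$
so the total edge energy is $O(|X_n|)$ with a constant depending only on $\beta,\alpha,d$. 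Plugging this into the expander inequality and restricting the left-hand side to $A_n'\times A_n'$ yields the uniform bound $\frac{1}{|A_n'|^2}\sum_{x,y\in A_n'}\|f'_n(x)-f'_n(y)\|^2\leqslant\beta'$ (this is Corollary~\ref{cor:proportion}), after which your Markov-plus-pigeonhole finish works verbatim. A minor secondary point: your justification for replacing the coarse embedding of $(N_n)$ by a uniformly Lipschitz one (``bounded geometry'') is not the right reason, since $N_n$ carries the induced, non-geodesic metric; the paper performs this upgrade via Bernstein functions and Schoenberg's theorem, following \cite{CTV}.
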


\subsection{Relative Kazhdan's Property T and non-embeddability into $\ell^2$}
Let $G$ be a discrete countable group, and let $(\HH,\pi)$ be an orthogonal representation of $G$ in a Hilbert space $\HH$. A sequence $(v_n)_{n\in \mathbb N}$ of unit vectors in $\HH$ is called \emph{almost invariant}, if for all $g\in G$, $\|\pi(g)v_n-v_n\|\to 0$ as $n\to\infty$.

Let $Y\subseteq G$ be an infinite subset.
The pair $(G,Y)$ has \emph{relative Property~T}~\cite{C} if for  every almost invariant sequence  $(v_n)_{n\in \mathbb N}$, the convergence is uniform on $Y$, that is, $\sup_{y\in Y}\|\pi(y)v_n-v_n\|\to 0$  as $n\to\infty$.  

Equivalently~\cites{AW,C}, the pair $(G,Y)$ has relative Property T if for every affine isometric action of $G$ on a Hilbert space $\HH$, the orbits of $Y$ are bounded.
Every affine isometric action $\sigma$ of a group $G$ on a Hilbert space $\HH$ decomposes as $\sigma(g)v=\pi(g)v+b(g)$, where $\pi$ is a norm-preserving representation, and $b$ is a $1$-cocycle. An affine action is therefore characterized by the data $(\HH,\pi,b)$.

Somewhat opposite to relative Property T is the Haagerup Property: a discrete countable group has the \emph{Haagerup property} if it admits an affine isometric action on a Hilbert space $(\HH,\pi,b)$ such that $b$ is a coarse embedding (such an action is called \emph{metrically proper}).

The box space of a group without the Haagerup property does not coarsely embed into a Hilbert space by an observation of John Roe~\cite{R}. 
The following proposition strengthens this statement (perhaps, it is known among experts).
\begin{PropIntro}\label{PropIntro:RelativePoincare}
Let $G$ be a finitely generated group, let $S$ be a finite generating subset.
\begin{itemize}

\item[(i)] Assume $G$ has a an infinite subset $Y$ such that $(G,Y)$ has relative Property~T.  Then there exists $C>0$ such that for all finite quotients $Q$ of $G$, every function $f$ from $Q$ to a Hilbert space satisfies the following ``relative Poincar\'e" inequality: for every $y\in Y,$ 
\begin{equation}\label{eq:relativeT}
\sum_{g\in Q} \|f(g\bar{y})-f(g)\|^2 \leqslant C \sum_{g\in Q, s\in S} \|f(g\bar{s})-f(g)\|^2,
\end{equation}
where $\bar{y}$ and $\bar{s}$ denote the projection of $y$ and $s$ in $Q$. 
\item[(ii)] If one only assumes that $G$ does not have the Haagerup property, then there exist $C>0$ and  a sequence of probability measures $\mu_n$ whose support $W_n$ 
is finite and  $d(e,W_n)\to \infty$, such that 
for all $n\in \mathbb{N}$, every function $f$ from $Q$ to a Hilbert space satisfies:
\begin{equation}
\sum_{g\in Q}\left(\sum_{y\in W_n}\|f(g\bar{y})-f(g)\|^2\mu_n(y) \right) \leqslant C\sum_{g\in Q,s\in S} \|f(g\bar{s})-f(g)\|^2. 
\end{equation}
\end{itemize}
\end{PropIntro}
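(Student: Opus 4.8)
The plan is to translate the representation-theoretic definition of relative Property T (and the negation of the Haagerup property) into a spectral-gap statement for functions on finite quotients, via an averaging argument over the regular representation of the quotient.

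For part (i), I would first recall the equivalent formulation of relative Property T in terms of a uniform spectral gap: since $(G,Y)$ has relative Property T, there is a constant $C>0$ such that for every orthogonal representation $(\HH,\pi)$ of $G$ and every $v\in\HH$, and every $y\in Y$, one has the inequality $\|\pi(y)v-v\|^2\leqslant C\sum_{s\in S}\|\pi(s)v-v\|^2$. This is the standard ``Kazhdan inequality'' form, and it holds uniformly over $y\in Y$ precisely because relative Property T forces almost-invariant vectors to be almost fixed \emph{uniformly} on $Y$; the passage from the convergence statement to the quantitative inequality is a routine compactness/ultralimit argument that I would carry out once. The second step is to specialize this inequality to a concrete representation built from the finite quotient $Q$. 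Given a finite quotient $Q=G/N$ and a function $f\colon Q\to\HH_0$ into an auxiliary Hilbert space $\HH_0$, I would consider the Hilbert space $\HH=\ell^2(Q)\otimes\HH_0\cong\ell^2(Q,\HH_0)$ and let $G$ act by the (left) regular representation of $Q$ pulled back along $G\to Q$, that is $(\pi(g)f)(x)=f(\bar g^{-1}x)$. Then $\|\pi(s)f-f\|^2=\sum_{g\in Q}\|f(\bar s^{-1}g)-f(g)\|^2$, which after reindexing equals $\sum_{g\in Q}\|f(g\bar s)-f(g)\|^2$ up to replacing $S$ by $S^{-1}$ (harmless, since a generating set and its inverse give the same word metric and one can symmetrize). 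Applying the Kazhdan inequality to this $\pi$ and to the vector $f$, summing or directly reading off the $y$-term, yields exactly inequality~(\ref{eq:relativeT}) with the same constant $C$. The key point is that $C$ depends only on $(G,Y,S)$ and not on the quotient $Q$, which is automatic because the Kazhdan inequality is a statement about \emph{all} representations of $G$ uniformly.

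For part (ii), the structure is the same but relative Property T is replaced by the mere failure of the Haagerup property. Here I would use the characterization that $G$ lacks the Haagerup property if and only if $G$ has relative Property T with respect to a sequence of measures rather than a fixed subset: concretely, there exist a constant $C>0$ and finitely supported probability measures $\mu_n$ with $d(e,\supp\mu_n)\to\infty$ such that for every representation $(\HH,\pi)$ and every $v\in\HH$, $\sum_{y}\|\pi(y)v-v\|^2\mu_n(y)\leqslant C\sum_{s\in S}\|\pi(s)v-v\|^2$. This is essentially the dual/measured reformulation of the statement that $G$ admits no proper conditionally negative definite function: the negation of Haagerup gives, by a standard argument, a sequence of normalized functions of positive type that stay bounded away from the behaviour a proper embedding would force, and these produce the measures $\mu_n$ with the displayed spectral inequality. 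Feeding the same quotient representation $\pi$ on $\ell^2(Q,\HH_0)$ into this inequality yields the stated Poincar\'e inequality of part (ii).

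I expect the main obstacle to be the clean extraction of the measures $\mu_n$ and the uniform constant $C$ in part (ii) from the failure of the Haagerup property; this is where the ``perhaps known among experts'' remark is doing real work, and I would want to either cite the precise equivalence (relating failure of Haagerup to a measured relative Kazhdan inequality, in the spirit of the $(G,Y)$ formulation of~\cite{C} and the cocycle reformulation recorded above) or spell out the compactness argument that converts the non-existence of a proper cocycle into the spectral inequality. By contrast, part (i) and the common ``averaging over the regular representation of $Q$'' step are routine once the uniform Kazhdan inequality is in hand; the only care needed is the bookkeeping with left versus right translations and with $S$ versus $S^{-1}$, and the verification that the constant genuinely does not depend on $Q$.
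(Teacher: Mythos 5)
Your overall architecture --- first a uniform inequality valid for all representations of $G$, then specialization to $\ell^2(Q,\HH_0)$ with the translation action --- is the same as the paper's, and your specialization step is correct (the left/right and $S$ versus $S^{-1}$ bookkeeping is indeed harmless, since sums like $\sum_{g\in Q}\|f(g\bar y^{\pm1})-f(g)\|^2$ are equal after reindexing). The gap is in your first step for part (i). The inequality $\|\pi(y)v-v\|^2\leqslant C\sum_{s\in S}\|\pi(s)v-v\|^2$ for \emph{all} vectors $v$ does \emph{not} follow from the uniform-convergence definition of relative Property T by a ``routine compactness/ultralimit argument''. Run your contradiction argument: you get representations $\pi_n$, unit vectors $v_n$, and $y_n\in Y$ with $\|\pi_n(y_n)v_n-v_n\|^2> n\sum_{s}\|\pi_n(s)v_n-v_n\|^2$. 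Since the left-hand side is at most $4$, the right-hand side tends to $0$, so $(v_n)$ is an almost invariant sequence for $\bigoplus_n\pi_n$, and relative Property T only yields $\sup_{y\in Y}\|\pi(y)v_n-v_n\|\to 0$. That conclusion is perfectly compatible with your failing sequence (say $\|\pi_n(y_n)v_n-v_n\|^2\sim 1/n$ while $\sum_s\|\pi_n(s)v_n-v_n\|^2\sim 1/n^{3}$): no contradiction arises. What compactness gives you is only an $\varepsilon$--$\delta$ statement (small $\max_s\|\pi(s)v-v\|/\|v\|$ forces small $\sup_y\|\pi(y)v-v\|/\|v\|$), and the linear ratio bound you claim is strictly stronger than that.

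The inequality you want is nevertheless true, but the route to it is the one the paper takes: pass to affine isometric actions. By \cite{AW} and \cite{C}, relative Property T is equivalent to every affine isometric action of $G$ having bounded $Y$-orbits, and the paper's Lemma~\ref{lem:relativeT} deduces the cocycle inequality $\|b(y)\|^2\leqslant C\sum_{s\in S}\|b(s)\|^2$ by a direct-sum trick: from a failing sequence of actions, normalized so that $\sum_s\|b_n(s)\|^2\leqslant 1$ and $\|b_n(y_n)\|^2\geqslant 2^n$, one forms $b:=\sum_n b_n/n$, a well-defined cocycle for $\bigoplus_n\pi_n$ that is unbounded on $Y$. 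In your setting the $b_n$ would be rescaled coboundaries $(\pi_n(\cdot)v_n-v_n)/\delta_n$, and the limiting object $b$ is a genuine cocycle which is no longer a coboundary: this Delorme--Guichardet-type passage is exactly what your ``routine compactness'' hides, and it is where the affine characterization is indispensable. (If $Y$ were a \emph{normal subgroup} you could argue purely linearly, splitting off the invariant subspace of $Y$-fixed vectors and using a uniform spectral gap on its complement; but part (i) concerns an arbitrary infinite subset $Y$, and the paper's wreath-product examples use a $Y$ that is not a subgroup, so this escape route is unavailable in the stated generality.) Your unitary inequality is then just the coboundary case of Lemma~\ref{lem:relativeT}, and the rest of your part (i) goes through. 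The same remark governs part (ii): the statement you need is precisely \cite{T}*{Theorem 7}, an assertion about all affine isometric actions; citing it and applying it to the coboundary $b(x)=f-\pi(x)f$, as the paper does, is correct, but your sketched ``standard argument'' for reproving it in a unitary/positive-type framework is again not routine and would need the cocycle formulation.
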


As an immediate consequence, we deduce 

\begin{cor}
Let $G$ be a finitely generated group and let $S$ be a finite generating subset. Assume that $G$ does not satisfy the Haagerup property. Then for any sequence of normal finite index subgroups 
$N_n\trianglelefteqslant G$ such that $\bigcap_{n=0}^{\infty}N_n=\{e\}$, the box space associated to $S$ and to $(N_n)_{n\in\mathbb{N}}$ does not coarsely embed into a Hilbert space. Moreover, if $G$ has relative Property T with respect to an infinite subset, then the box space does not coarsely embed into a Hilbert space. 
\end{cor}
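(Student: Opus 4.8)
The plan is to assume, for contradiction, that the box space $(Q_m)_{m\in\N}$, with $Q_m=G/N_m$ and the Cayley metric $d_m$, coarsely embeds into a Hilbert space $\HH$ via maps $\phi_m\colon Q_m\to\HH$; thus there are proper functions $\rho,\gamma$ (which we may take nondecreasing) with $\rho(d_m(x,y))\leqslant\|\phi_m(x)-\phi_m(y)\|\leqslant\gamma(d_m(x,y))$ for all $m$ and all $x,y\in Q_m$. The two ingredients I would feed against each other are, on one side, the (relative) Poincar\'e inequalities of Proposition~\ref{PropIntro:RelativePoincare} coming from the failure of the Haagerup property, and on the other side the defining feature of a box space: the projection $\pi_m\colon G\to Q_m$ restricts to a distance-preserving map on the ball $B_G(e,r_m)$, where the injectivity radius $r_m\to\infty$ as $m\to\infty$ (here one uses that the $N_m$ form a nested sequence with trivial intersection, so that each nontrivial $g\in G$ eventually leaves $N_m$ and never returns).

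For the first statement I would apply part (ii). Fix $m$ and apply the inequality to $f=\phi_m$ on $Q=Q_m$, for an index $n=n(m)$ to be chosen. The right-hand side is controlled from above: since $d_m(g\bar s,g)\leqslant 1$, each edge contributes $\|\phi_m(g\bar s)-\phi_m(g)\|^2\leqslant\gamma(1)^2$, so the right-hand side is at most $C\,|Q_m|\,|S|\,\gamma(1)^2$. For the left-hand side, left-invariance of $d_m$ gives $\|\phi_m(g\bar y)-\phi_m(g)\|\geqslant\rho(d_m(\bar y,e))$, and the normalization $\sum_{y}\mu_{n(m)}(y)=1$ then produces a lower bound of the form $|Q_m|\,\bigl(\min_{y\in W_{n(m)}}\rho(d_m(\bar y,e))\bigr)^2$.

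The heart of the argument, and the step I expect to be the main obstacle, is to choose $n(m)$ so that the two limiting phenomena cooperate rather than conflict. I want $W_{n(m)}\subseteq B_G(e,r_m)$, so that $d_m(\bar y,e)=d(e,y)$ for all $y\in W_{n(m)}$ and the projection does not collapse distances; simultaneously I want $d(e,W_{n(m)})\to\infty$, so that the lower control blows up. Writing $D_n:=\max_{y\in W_n}d(e,y)$, the first requirement is $D_{n(m)}\leqslant r_m$. Since $r_m\to\infty$, for every fixed $k$ one has $D_0,\dots,D_k\leqslant r_m$ for all large $m$; hence I can select $n(m)\to\infty$ with $D_{n(m)}\leqslant r_m$, and then $R_m:=d(e,W_{n(m)})\to\infty$ because $n(m)\to\infty$ and $d(e,W_n)\to\infty$. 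With this choice the inequality reads $|Q_m|\,\rho(R_m)^2\leqslant C\,|Q_m|\,|S|\,\gamma(1)^2$, i.e. $\rho(R_m)^2\leqslant C\,|S|\,\gamma(1)^2$; as $R_m\to\infty$ and $\rho$ is proper, the left-hand side tends to infinity, which is the desired contradiction.

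For the ``moreover'' statement I would run the same argument using part (i), which is simpler as there is no measure to normalize. Since $Y$ is infinite and balls in $G$ are finite, $Y$ contains elements of arbitrarily large word length, so for each $m$ I can pick $y_m\in Y\cap B_G(e,r_m)$ with $d(e,y_m)\to\infty$. Applying the relative Poincar\'e inequality \eqref{eq:relativeT} to $f=\phi_m$ and $y=y_m$, bounding the right-hand side by $C\,|Q_m|\,|S|\,\gamma(1)^2$ and the left-hand side below by $|Q_m|\,\rho(d(e,y_m))^2$ (again using that distances are preserved inside $B_G(e,r_m)$), yields exactly the same contradiction.
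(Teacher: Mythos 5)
Your proposal is correct and follows exactly the route the paper intends: the paper states this corollary as an ``immediate consequence'' of Proposition~\ref{PropIntro:RelativePoincare}, and your argument is precisely that deduction, with the standard box-space details (injectivity radius $r_m\to\infty$ from nestedness and trivial intersection, the diagonal choice of $n(m)$ so that $W_{n(m)}$ sits inside the isometrically embedded ball, and the $\rho$-versus-$\gamma$ comparison) spelled out. The step you single out as ``the heart of the argument'' is indeed the only non-trivial point the paper leaves implicit, and you handle it correctly.
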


\subsection{Non-embeddability  into $L^p$-spaces}
Observe that the results of the previous section do not provide any obstruction to embeddings into an $L^p$-space for $p> 2$ (note that coarse embeddability into $L^p$ are equivalent for all 
$1\leqslant p\leqslant 2$, see \cite{W}*{III.A.6} and \cite{S}). 
Using a different approach based on complex interpolation method due to V. Lafforgue, see \cite{P}*{\S 3}, we can nevertheless obtain -- under a certain condition -- a Poincar\'e inequality which is valid for all $L^p$-spaces, for $1\leqslant p<\infty$, and more generally for all uniformly curved Banach space, see \S \ref{sec:proofPropext} for the definition.

Let $G$ be a discrete countable group and let $A$ be a finite subset of $G$, denote by $M_A$ the averaging operator 
$$M_Af(g):=\frac{1}{|A|}\sum_{a\in A}f(ga),$$
for all $f\in \ell^2(G)$, and all $g\in G$. If $G$ is finitely generated,  and $S$ is a finite symmetric generating subset, then $M_S$ is the Markov operator associated to the simple random walk on the Cayley graph  $(G,S)$. On the other hand, if $H$ is a subgroup, then $M_H$ is the orthogonal projection on the subspace of $H$-right-invariant functions in $\ell^2(G)$.

\begin{PropIntro}\label{prop:poincareLp} Let $G$ be a finitely generated group, let $S$ be a finite symmetric generating subset.
Assume $G$ has a normal subgroup $H$ such that $(G,H)$ has relative Property~T.  Then, for every $1\leqslant  p<\infty$, there exist $C>0$ and $n_0\geqslant  1$, such that for every finite quotient $Q$ of $G$, every function $f$ from $Q$ to an $L^p$-space satisfies the following inequality:  
\begin{equation*}
\sum_{g\in Q} \|M_{q(H)}f(g)-f(g)\|^p \leqslant C\sum_{g\in Q} \|M_S^{n_0}f(g)-f(g)\|^p, 
\end{equation*}
where $q\colon G\twoheadrightarrow Q$ is the canonical projection. In particular, one deduces the following relative Poincar\'e inequality in $L^p$-spaces: for all $h\in H$,
\begin{equation*}
\sum_{g\in Q} \|f(g\bar{h})-f(g)\|^p \leqslant (2n_0)^pC\sum_{g\in Q,s\in S} \|f(g\bar{s})-f(g)\|^p, 
\end{equation*}
where $\bar{h}:=q(h)$ and $\bar{s}:=q(s)$.
\end{PropIntro}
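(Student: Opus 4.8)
The plan is to reduce the inequality to a single operator identity and then control operator norms by interpolation. Every operator in sight acts only on the $Q$-variable, so for an $L^p$-space $E$ it has the form $(\,\cdot\,)\otimes\Id_E$ on $\ell^p(Q;E)=\ell^p(Q)\otimes E$; it therefore suffices to produce, uniformly over the finite quotient $Q$, an operator $T$ on $\ell^2(Q)$ with
$$T(\Id-M_S^{n_0})=\Id-M_{q(H)}\qquad\text{and}\qquad \|T\otimes\Id_E\|_{\ell^p(Q;E)\to\ell^p(Q;E)}\le C,$$
where $C$ does not depend on $Q$. Since $H$ is normal, $q(H)\trianglelefteqslant Q$, so the subspaces $V_0:=\operatorname{range}M_{q(H)}$ (the $q(H)$-invariant functions) and $V_1:=\ker M_{q(H)}$ are preserved by every right translation, hence by $M_S$; moreover $M_S$ and $M_{q(H)}$ are self-adjoint and commute on $\ell^2(Q)$.

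First I would settle the Hilbert case, $E$ a Hilbert space and $p=2$. Relative Property~T of $(G,H)$, in the quantitative form underlying Proposition~\ref{PropIntro:RelativePoincare}(i), is precisely a spectral gap at $1$ on $V_1$: there is $c>0$, independent of $Q$, with $\langle(\Id-M_S)v,v\rangle\ge c\|v\|^2$ for all $v\in V_1$. After replacing $M_S$ by the lazy walk $\tfrac12(\Id+M_S)$ if the walk is periodic — a change that propagates harmlessly through the telescoping reduction to single $S$-steps performed below — I may assume $\|M_S^{n_0}|_{V_1}\|_{2\to2}\le\rho<1$. I then set $T:=\sum_{k\ge0}M_S^{n_0k}(\Id-M_{q(H)})$, which converges in operator norm on $\ell^2(Q)$ because $\Id-M_{q(H)}$ is the orthogonal projection onto $V_1$. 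Since all factors commute and $M_S^{n_0k}\to0$ on $V_1$, a telescoping computation gives $T(\Id-M_S^{n_0})=\Id-M_{q(H)}$, and $\|T\|_{2\to2}\le(1-\rho)^{-1}$ uniformly in $Q$.

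The crux is to bound $T\otimes\Id_E$ on $\ell^p(Q;E)$ for $E$ an arbitrary $L^p$-space, and indeed any uniformly curved Banach space, term by term in the above series. Each $M_S^{n_0k}(\Id-M_{q(H)})$ is built from normalized averaging operators, so up to a factor $2$ it is a contraction on $\ell^1(Q)$ and on $\ell^\infty(Q)$, uniformly in $k$ and $Q$; on $\ell^2(Q)$ its norm is at most $\rho^{n_0k}$ by the gap. This is exactly the input for Lafforgue's complex interpolation, in the packaging of uniform curvature recalled in \S\ref{sec:proofPropext}: there is a modulus $\alpha_E$ with $\alpha_E(t)\to0$ as $t\to0$ such that any operator on $\ell^2(Q)$ which is an $\ell^1$- and $\ell^\infty$-contraction and has $\ell^2$-norm $\le t$ satisfies $\|(\,\cdot\,)\otimes\Id_E\|_{\ell^p(Q;E)}\le\alpha_E(t)$. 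Applied to $\tfrac12 M_S^{n_0k}(\Id-M_{q(H)})$, this bounds the $k$-th term by $2\,\alpha_E\!\big(\tfrac12\rho^{n_0k}\big)$. The point that makes the series summable, rather than merely termwise small, is Pisier's self-improvement of the curvature modulus to a power law $\alpha_E(t)\le C't^{\beta}$ for some $\beta>0$; with it, $\sum_k\alpha_E(\tfrac12\rho^{n_0k})<\infty$, and $T\otimes\Id_E$ is bounded on $\ell^p(Q;E)$ by a constant independent of $Q$. This interpolation-plus-summability step is where I expect the real work to lie, since the Hilbertian gap alone gives no quantitative control on a general uniformly curved target.

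With $\|T\otimes\Id_E\|\le C^{1/p}$ in hand, the operator identity yields $\|(\Id-M_{q(H)})f\|_{\ell^p(Q;E)}\le C^{1/p}\,\|(M_S^{n_0}-\Id)f\|_{\ell^p(Q;E)}$, which is the displayed inequality after raising to the $p$-th power. For the ``in particular'' relative Poincar\'e inequality I would use two elementary reductions. Since $M_{q(H)}f$ is $q(H)$-invariant, for $h\in H$ one has $f(g\bar h)-f(g)=[f(g\bar h)-M_{q(H)}f(g\bar h)]-[f(g)-M_{q(H)}f(g)]$; summing over $g$, which is a bijection under $g\mapsto g\bar h$, bounds $\sum_g\|f(g\bar h)-f(g)\|^p$ by $2^{p}\sum_g\|(M_{q(H)}-\Id)f(g)\|^p$. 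On the other side, writing $M_S^{n_0}f(g)-f(g)$ as an average over paths of telescoped products of $n_0$ single $S$-steps and applying Jensen's inequality twice bounds $\sum_g\|M_S^{n_0}f(g)-f(g)\|^p$ by $n_0^{\,p}\sum_{g,s}\|f(g\bar s)-f(g)\|^p$. Chaining the three estimates produces the constant $(2n_0)^pC$.
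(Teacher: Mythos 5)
Your argument follows the same skeleton as the paper's proof: the uniform spectral gap on $V_1=\ker M_{q(H)}$ extracted from relative Property~T via (\ref{eq:relativeT}) with $Y=H$, commutation of $M_S$ with $\Id-M_{q(H)}$ because $H$ is normal, interpolation between $\ell^1/\ell^\infty$ and $\ell^2$, inversion of $\Id-M_S^{n_0}$ on the range of $\Id-M_{q(H)}$, and the final telescoping (which the paper leaves implicit and you carry out correctly, with the stated constant $(2n_0)^pC$). On one real point you are more careful than the paper: inequality (\ref{eq:relativeT}) only gives $\langle M_Sv,v\rangle\leqslant(1-c)\|v\|^2$ on $V_1$, i.e.\ it pushes the spectrum of $M_S|_{V_1}$ away from $+1$ but not from $-1$, so the paper's assertion that $M_S|_{V_1}$ has \emph{norm} $<1$ does not follow as stated (a finite quotient carrying a character equal to $-1$ on all of $\bar S$ and nontrivial on $q(H)$ puts $-1$ in the spectrum of $M_S|_{V_1}$, and such pairs $(G,H)$ with relative Property~T exist). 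Your lazy-walk replacement $\frac12(\Id+M_S)$ is the standard repair; the price is that the first displayed inequality is then proved with the lazy walk in place of $M_S$, which is harmless for the ``in particular'' inequality and for every use made of the proposition.

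The genuine gap is in your summation step. You bound each term $M_S^{n_0k}(\Id-M_{q(H)})$ of the Neumann series separately by interpolation and then need $\sum_k\alpha_E\bigl(\tfrac12\rho^{n_0k}\bigr)<\infty$, which you justify by ``Pisier's self-improvement of the curvature modulus to a power law.'' No such theorem exists: Pisier's result \cite{P} is that the spaces admitting a power-law modulus $\Delta_X(\eps)=\eps^{\delta}$ are exactly the subquotients of $\theta$-Hilbertian spaces, and whether every uniformly curved space admits such a modulus is open --- the paper itself calls the power-law class ``a priori much smaller.'' For the statement actually under review the damage is limited, because for $E=L^p$ with $1<p<\infty$ plain Riesz--Thorin gives the power law $\|A\otimes\Id_E\|_{\ell^p}\leqslant\|A\|_{\ell^1}^{1-\theta}\|A\|_{\ell^2}^{\theta}$ (or its $\ell^\infty$ analogue for $p>2$) directly, so your series does converge. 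But you missed that your terms are powers of a single operator: by commutation and idempotence, $M_S^{n_0k}(\Id-M_{q(H)})=\bigl(M_S^{n_0}(\Id-M_{q(H)})\bigr)^k$. The paper's route exploits exactly this: interpolate \emph{once} to make $A:=M_S^{n_0}(\Id-M_{q(H)})$ have norm $\leqslant 1-c_p<1$ on the target space --- which for a uniformly curved target needs only $\Delta_X(\eps)\to0$, no rate --- and then sum the Neumann series by submultiplicativity, $\|A^k\|\leqslant\|A\|^k$. So the step you single out as ``where the real work lies'' is precisely where your argument needs an unavailable ingredient while the paper's needs none; this is also what makes the extension to uniformly curved spaces (Proposition \ref{prop:unifcurved}) go through. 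Incidentally, both your argument and the paper's degenerate at $p=1$, where the interpolation exponent vanishes.
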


\begin{PropIntro}\label{prop:unifcurved}
We keep the notation of Proposition \ref{prop:poincareLp}.
For every uniformly curved Banach space  $X$, there exist $C>0$ and $n_0\geqslant  1$, such that for every finite quotient $Q$ of $G$, every function $f$ from $Q$ to $X$ satisfies the following  inequality:  
\begin{equation*}
\sum_{g\in Q} \|M_{q(H)}f(g)-f(g)\|^2 \leqslant C\sum_{g\in Q} \|M_S^{n_0}f(g)-f(g)\|^2, 
\end{equation*}
In particular, for all $h\in H$ and $f$ from $Q$ to an $X$,
\begin{equation*}
\sum_{g\in Q} \|f(g\bar{h})-f(g)\|^2 \leqslant (2n_0)^2C\sum_{g\in Q,s\in S} \|f(g\bar{s})-f(g)\|^2. 
\end{equation*}
\end{PropIntro}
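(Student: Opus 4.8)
The plan is to follow the proof of Proposition \ref{prop:poincareLp} verbatim, replacing the interpolation estimate valid in $L^p$-spaces by the defining property of a uniformly curved Banach space, with the exponent $2$ playing the role of $p$ throughout. First I would recast the first displayed inequality as an operator inequality on the vector-valued space $\ell^2(Q;X)$. Write $M:=M_S$ and $P:=M_{q(H)}$, viewed as acting coordinatewise (i.e.\ as $T\otimes\mathrm{Id}_X$) on $\ell^2(Q;X)$. Since $H$ is normal, $\mathbf 1_{q(H)}$ and the uniform measure on $S$ commute under convolution, so $M$ and $P$ commute; both are self-adjoint contractions of $\ell^2(Q)$, and $P$ is the orthogonal projection onto the $q(H)$-invariant functions. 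The target is $\|(I-P)f\|_{\ell^2(Q;X)}\le C^{1/2}\|(I-M^{n_0})f\|_{\ell^2(Q;X)}$. The first input is the $L^2$ spectral gap given by relative Property~T: applying Proposition \ref{PropIntro:RelativePoincare}(i) to the unitary representation of $G$ on $\ell^2(Q)$ produces, uniformly over all finite quotients $Q$, a constant $\delta\in(0,1)$ with $\mathrm{spec}\big(M|_{\mathrm{ran}(I-P)}\big)\subseteq[-1,1-\delta]$. Choosing $n_0$ odd keeps the spectrum bounded away from $+1$ after raising to the power $n_0$, so that on $\mathrm{ran}(I-P)$ the operator $I-M^{n_0}$ is boundedly invertible in $L^2$, with inverse of norm at most $\delta^{-1}$.

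The main step is to pass from this $L^2$ statement to the $X$-valued one. Define $B:=(I-M^{n_0})^{-1}(I-P)$ on $\ell^2(Q)$ (the inverse taken on $\mathrm{ran}(I-P)$, and $B:=0$ on $\mathrm{ran}\,P$); then $B(I-M^{n_0})=I-P$, so $(I-P)\otimes\mathrm{Id}_X=(B\otimes\mathrm{Id}_X)\big((I-M^{n_0})\otimes\mathrm{Id}_X\big)$, and it suffices to bound $\|B\otimes\mathrm{Id}_X\|$ uniformly in $Q$, with $C^{1/2}:=\|B\otimes\mathrm{Id}_X\|$. By the spectral theorem $B=g(M)$ on $\mathrm{ran}(I-P)$ with $g(t)=(1-t^{n_0})^{-1}$, and expanding $g$ in an Abel-summable series in the powers $M^{n_0k}$ realises $B$ as a controlled combination of the Markov operators $M^{j}$, each of which is a regular contraction (contractive on $\ell^1(Q)$ and on $\ell^\infty(Q)$ simultaneously). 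This is precisely the setting of V.~Lafforgue's complex interpolation method presented in \cite{P}*{\S 3}: interpolating between the trivial $\ell^1$/$\ell^\infty$ bound and the $L^2$ decay coming from the spectral gap, and invoking the definition of a uniformly curved space — for every $\varepsilon>0$ there is $\eta>0$ so that any regular contraction $T$ with $\|T\|_{\ell^2\to\ell^2}\le\eta$ satisfies $\|T\otimes\mathrm{Id}_X\|_{\ell^2(\cdot;X)}\le\varepsilon$ — one bounds $\|B\otimes\mathrm{Id}_X\|$ by a constant depending only on $X$, $\delta$ and $n_0$; taking $n_0$ large forces the relevant $L^2$-norms below $\eta$. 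This interpolation step, and in particular the bookkeeping of the conditionally convergent functional calculus near the spectral value $-1$, is the technical heart of the argument and the one place where uniform curvature (rather than mere reflexivity or an $L^p$ structure) is what makes the estimate pass to the vector-valued setting.

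Finally I would deduce the relative Poincar\'e inequality. For $h\in H$ the function $M_{q(H)}f$ is $q(H)$-invariant, so $M_{q(H)}f(g\bar h)=M_{q(H)}f(g)$, whence $f(g\bar h)-f(g)=(I-P)f(g)-\big[(I-P)f\big](g\bar h)$; since right translation by $\bar h$ is an isometry of $\ell^2(Q;X)$ this gives $\|(\rho(\bar h)-I)f\|_{\ell^2(Q;X)}\le 2\|(I-P)f\|_{\ell^2(Q;X)}$. On the other hand, telescoping $I-M^{n_0}$ along $n_0$-step paths and again using that each partial right translation is an isometry yields $\|(I-M^{n_0})f\|_{\ell^2(Q;X)}\le n_0\max_{s\in S}\|(\rho(\bar s)-I)f\|_{\ell^2(Q;X)}\le n_0\big(\sum_{g\in Q,\,s\in S}\|f(g\bar s)-f(g)\|^2\big)^{1/2}$. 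Combining these two bounds with the operator inequality $\|(I-P)f\|_{\ell^2(Q;X)}\le C^{1/2}\|(I-M^{n_0})f\|_{\ell^2(Q;X)}$ produces the factor $(2n_0)^2C$ and completes the proof.
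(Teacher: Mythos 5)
Your setup is sound and agrees with the paper: the reduction to an operator inequality on $\ell^2(Q;X)$ (keeping your notation $M:=M_S$, $P:=M_{q(H)}$), the one-sided spectral bound $\mathrm{spec}(M|_{\ker P})\subseteq[-1,1-\delta]$ coming from relative Property~T, and the closing telescoping argument are all correct; moreover your worry about the spectral value $-1$ is a legitimate subtlety that the paper passes over when it asserts that $M_S$ has norm $<1$ on the orthogonal complement of the $q(H)$-invariant vectors. But the step you yourself call the technical heart has a genuine gap. Uniform curvature is a hypothesis about a \emph{single} operator that is simultaneously a contraction on $\ell^1$ and on $\ell^\infty$ and has small $\ell^2$ norm. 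Your $B:=(I-M^{n_0})^{-1}(I-P)$ meets neither requirement in a usable way: it has the $\ell^2$ bound $\|B\|\le 2/\delta$, but no uniform $\ell^1$/$\ell^\infty$ control (it is produced by functional calculus, not by an averaging kernel), so the definition cannot be applied to $B$ itself; and if instead you apply it termwise to the series $\sum_k M^{n_0k}(I-P)$, the terms are regular (norm $\le 2$ on $\ell^1$ and $\ell^\infty$) but their $\ell^2$ norms need not decay at all: precisely because the gap is one-sided, $\|M^{n_0k}(I-P)\|_{\ell^2}=1$ for every $k$ whenever $-1\in\mathrm{spec}(M|_{\ker P})$, and this is unaffected by taking $n_0$ odd or large --- oddness makes $I-M^{n_0}$ invertible on $\ell^2$, but invertibility was never the obstruction, and Abel means $\sum_k r^k M^{n_0k}(I-P)$ carry only the trivial $X$-valued bound of order $(1-r)^{-1}$, the compensating cancellation being spectral and hence invisible in $\ell^2(Q;X)$. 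Worse, even granting a two-sided gap $\theta<1$, your sum of termwise bounds is $\sum_k 2\Delta_X\bigl(\tfrac12\theta^{n_0k}\bigr)$, and the definition of a uniformly curved space gives \emph{no rate} for $\Delta_X(\eps)\to0$ (power-type $\Delta_X$ characterizes subquotients of $\theta$-Hilbertian spaces, by Pisier's theorem, and cannot be assumed), so this series may diverge. In short, you invert in $\ell^2$ first and try to transfer to $X$ second; that order cannot be carried out from the stated hypotheses.

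The paper's argument does the transfer first and never needs a series, and this is the essential difference. Exactly as in the proof of Proposition~\ref{prop:poincareLp} run with $p=2$: the single operator $\tfrac12 M_S^{n_0}(I-P)=\tfrac12\bigl(M_S(I-P)\bigr)^{n_0}$ is a contraction on $\ell^1$ and on $\ell^\infty$ and, granting the paper's claim $\theta:=\|M_S(I-P)\|_{\ell^2}<1$, has $\ell^2$ norm $\le\tfrac12\theta^{n_0}$; \emph{one} application of the definition of uniform curvature gives $\|M_S^{n_0}(I-P)\|_{\ell^2(Q;X)}\le 2\Delta_X\bigl(\tfrac12\theta^{n_0}\bigr)\le 1-c$ for a single suitable $n_0$, and the proposition follows from the elementary estimate $\|h\|\le c^{-1}\|h-M_S^{n_0}(I-P)h\|$ applied to $h=(I-P)f$, using $(I-P)^2=I-P$, commutation, and $\|(I-P)\otimes\mathrm{Id}_X\|\le 2$. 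No summation occurs, hence no rate for $\Delta_X$ is needed; if one later wants an actual inverse, one may now sum $\sum_k\bigl(M_S^{n_0}(I-P)\bigr)^k\otimes\mathrm{Id}_X$, because after the transfer the decay is geometric in the $X$-valued norm. Finally, note that the $-1$ issue you spotted also affects this argument, through the claim $\theta<1$; the standard repair is not parity of $n_0$ but laziness: run the same one-power argument with $M_S$ replaced by $\tfrac12(I+M_S)$, whose spectrum on $\ker P$ lies in $[0,1-\delta/2]$. This yields the relative Poincar\'e inequality (the ``in particular'' statement, which is all that is used later in the paper) with adjusted constants, while staying inside the single-operator regime where uniform curvature is actually applicable.
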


As a consequence we get the following non-embeddability result extending its well-known version for expanders.

\begin{cor}
Assume $G$ has an infinite normal subgroup $H$ such that $(G,H)$ has relative Property~T. Then for any sequence of normal finite index subgroups $N_n\trianglelefteqslant G$ such that $\bigcap_{n=0}^{\infty}N_n=\{e\}$, the box space associated to $S$ and to $(N_n)_{n\in\mathbb{N}}$  does not coarsely embed into any uniformly curved Banach space. 
\end{cor}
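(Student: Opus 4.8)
The plan is to derive the corollary directly from the relative Poincar\'e inequality of Proposition \ref{prop:unifcurved}, arguing by contradiction; the $L^p$ case is subsumed since $L^p$-spaces are uniformly curved. Fix a uniformly curved Banach space $X$ and suppose the box space $(Y_n)_{n\in\N}$, with $Y_n=(G/N_n,S_n)$, coarsely embeds into $X$ via maps $\phi_n\colon Y_n\to X$ with proper distortion functions $\rho,\gamma$, so that $\rho(d_n(x,y))\leqslant \|\phi_n(x)-\phi_n(y)\|\leqslant \gamma(d_n(x,y))$ for all $n$ and all $x,y\in Y_n$.

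First I would bound the right-hand side of the Poincar\'e inequality uniformly. Since $g\bar{s}$ and $g$ are adjacent in $Y_n$, the upper distortion bound gives $\|\phi_n(g\bar{s})-\phi_n(g)\|\leqslant \gamma(1)$ for every $g\in Q_n:=G/N_n$ and every $s\in S$, whence $\sum_{g\in Q_n,\,s\in S}\|\phi_n(g\bar{s})-\phi_n(g)\|^2\leqslant |S|\,\gamma(1)^2\,|Q_n|$.

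The key step is to produce, for arbitrarily large $R$, an element of $H$ whose projection to a suitable quotient has word length exactly $R$. Since $H$ is infinite, I choose $h^{(k)}\in H$ with $|h^{(k)}|_S\to\infty$. Because $\bigcap_n N_n=\{e\}$, for each $k$ there is an index $n(k)$ such that the projection $G\to Q_{n(k)}$ is injective on the ball of radius $|h^{(k)}|_S$ about $e$; consequently $d_{n(k)}(\overline{h^{(k)}},e)=|h^{(k)}|_S$. By left-invariance of the Cayley metric, $d_{n(k)}(g\,\overline{h^{(k)}},g)=|h^{(k)}|_S$ for every $g$, so the lower distortion bound yields $\sum_{g\in Q_{n(k)}}\|\phi_{n(k)}(g\,\overline{h^{(k)}})-\phi_{n(k)}(g)\|^2\geqslant \rho(|h^{(k)}|_S)^2\,|Q_{n(k)}|$.

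Combining these estimates with the inequality of Proposition \ref{prop:unifcurved} applied with $Q=Q_{n(k)}$, $f=\phi_{n(k)}$ and $h=h^{(k)}$, then dividing by $|Q_{n(k)}|$, gives $\rho(|h^{(k)}|_S)^2\leqslant (2n_0)^2 C\,|S|\,\gamma(1)^2$, a bound independent of $k$. Since $\rho$ is proper and $|h^{(k)}|_S\to\infty$, the left-hand side tends to infinity, a contradiction. I expect the only delicate point to be the distance-preservation step: ensuring one may select $n(k)$ large enough that the word length of $h^{(k)}$ is computed faithfully in the finite quotient. This rests on residual finiteness, encoded by $\bigcap_n N_n=\{e\}$, together with the infiniteness of $H$, which guarantees an unbounded supply of elements to feed into the argument.
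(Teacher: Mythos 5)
Your argument is correct and is essentially the paper's own (implicit) proof: the corollary is presented as an immediate consequence of Proposition \ref{prop:unifcurved}, and the intended deduction is exactly your contradiction argument --- the Lipschitz upper control bounds the right-hand side of the relative Poincar\'e inequality by $|S|\gamma(1)^2|Q_n|$, while nestedness plus $\bigcap_n N_n=\{e\}$ guarantees that projections of elements of the infinite subgroup $H$ realize arbitrarily large distances in the quotients, so properness of $\rho$ is violated. The only (inessential) inaccuracy is your opening remark that the $L^p$ case is ``subsumed'': this is unproblematic for $1<p<\infty$, but it is not known that $L^1$ is uniformly curved, which is precisely why the paper handles $L^p$-spaces separately via Proposition \ref{prop:poincareLp}; this does not affect the corollary you were asked to prove.
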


\subsection{Relative expanders and Poincar\'e inequalities}
 Given a sequence of groups $G_n$ equipped with left-invariant metrics, we say that a sequence of subsets $Y_n\subseteq G_n$ is {\it bounded} if there exists $R>0$ such that $Y_n$ lies in the ball of radius $R$ about the neutral element in $G_n$.

It is convenient to introduce the following terminology.
 Given a sequence of Cayley graphs $(Q_n,S_n)_{n\in\mathbb N}$ with $|S_n|\leqslant  k$, for some $k\in \N$, and a sequence of unbounded subsets $Y_n\subseteq Q_n$,  $(Q_n,S_n)_{n\in\mathbb N}$ is an \emph{expander relative to $Y_n$} if it satisfies (\ref{eq:relativeT}) with a constant $C>0$ independent of $n$. Clearly, the sequence $(G_n,S_n)_{n\in\mathbb N}$ is an expander if it is an expander relative to itself.  Unboundedness of $Y_n$ ensures that relative expanders do not coarsely embed into a Hilbert space. On the other hand, it is currently unclear whether this also prevents them from coarsely embedding into $L^p$-spaces for all $2\leqslant p<\infty$.

The phenomenon of relative expansion plays a crucial role\footnote{In \cite{ALW}, they use it to show that for certain sequences of finite Cayley graphs, expansion depends on the set of generators.} in \cite{ALW}, where it is used in order to show that certain finite semi-direct products $A_n\rtimes H_n$ form an expander. Indeed, in order to prove that their sequence is an expander, they need expansion to occurs both in $H_n$, and relative to $A_n$. 

Observe that this terminology is essentially group theoretic in nature. However,  the expansion relative to a sequence of {\it subgroups} has a nice graph-theoretic counterpart, namely expansion relative to a partition: given a sequence of finite connected graphs $(Y_n)_{n\in\mathbb N}$ with uniformly bounded degree,  and for every $n$ a partition $\PP_n$ of the vertex set of $Y_n$ into subsets such that $\max_{P\in \PP_n}|P|\to \infty$, say that the sequence $(Y_n)_{n\in\mathbb N}$ is an \emph{expander relative to $\PP_n$} if for every map $f\colon Y_n\to \HH$, 
  $$\sum_{P\in \PP_n}\sum_{x\in P} \|f(x)-M_P(f)\|^2 \leqslant C \sum_{x\sim y} \|f(x)-f(y)\|^2,$$
where $M_P(f):=\frac{1}{\vert P\vert}\sum_{x\in P}f(x)$, and where $x\sim y$ means they form an edge. In the case of a Cayley graph $(G_n,S_n)$, that is, of an expander relative to unbounded subgroups $H_n\leqslant G_n$, the elements of the partition $\PP_n$ are simply the right-cosets of $H_n$.

However, in the context of finite Cayley graphs (even of box spaces), we show that expansion relative to subgroups does not cover all possibilities:  in \S \ref{sec:wreath}, we construct a box space that does not coarsely embed into a Hilbert space and yet is not an expander relative to any sequence of subgroups. 
For those box spaces, it does not seem that one can obtain anything better than inequalities as  (\ref{eq:relativeT}). These inequalities can be interpreted as special cases of  Poincar\'e inequalities relative to a sequence of probability measures:  given a sequence of finite connected graphs $(Y_n)_{n\in\mathbb N}$ with uniformly bounded degree, a sequence $r_n\to \infty$ and for every $n$ a probability measure $\mu_n$ supported on $\Delta_{r_n}(Y_n)=\{(x,y)\in Y_n\times Y_n,\; d(x,y)\geqslant r_n\}$, say that the sequence $(Y_n)_{n\in\mathbb N}$ is an \emph{expander relative to $\mu_n$} if for every map $f\colon Y_n\to \HH$, $$\sum_{(x,y)\in \Delta_{r_n}(Y_n)} \|f(x)-f(y)\|^2 \mu_n(x,y)\leqslant \frac{C}{\vert Y_n\vert} \sum_{x\sim y} \|f(x)-f(y)\|^2.$$ 
For the sake of comparison, observe that an expander is an expander relative to the (renormalized) counting measure on $Y_n\times Y_n$. 
At the other extreme, recall \cites{T,O1}  that a metric space $Y$ does not coarsely embed into a Hilbert space if and only if there exist $r_n\to \infty$,  a sequence of probability measures $\mu_n$ supported on $\Delta_{r_n}(Y)$, and a constant $C<\infty$,  such that  for every $1$-Lipschitz map $f\colon Y\to \HH$, 
$$\sum_{(x,y)\in \Delta_{r_n}(Y)} \|f(x)-f(y)\|^2 \mu_n(x,y)\leqslant C.$$
Here is a slight improvement of this result that holds for sequences of finite Cayley graphs.

\begin{thm}
Let $(G_n,S_n)_{n\in\mathbb N}$ be a sequence of finite Cayley graphs with $|S_n|<\infty$, and let $1\leqslant p<\infty$. The following conditions are equivalent
\begin{itemize}
\item[(1)] The sequence  $(G_n,S_n)_{n\in\mathbb N}$ does not coarsely embed into $L^p$.

\item[(2)] Up to taking a subsequence,  there exist $r_n\to \infty$,  a sequence of probability measures $\mu_n$ supported on $\Delta_{r_n}(G_n)$, and a constant $C<\infty$,  such that  for every map $f\colon Y\to L^p$, 
\begin{equation}\label{eq:fortgeneralizedexp}
\sum_{g,g'\in G_n}\|f(g)-f(g')\|^p\mu_n(g,g')\leqslant \frac{C}{|G_n|}\sum_{g\sim g'} \|f(g)-f(g')\|^p.
\end{equation}
Moreover, $\mu_n$ is a  left-invariant measure on $G_n$:  $\mu_n(gg_1,gg_2)=\mu_n(g_1,g_2)$ for all $g,g_1,g_2\in G_n$.
\end{itemize}  
\end{thm}

\begin{proof}
Clearly (2) implies (1), so let us prove the converse implication. By the main result of \cite{T}, up to taking a subsequence,  there exist $r_n\to \infty$,  a sequence of probability measures $\nu_n$ supported on $\Delta_{r_n}(G_n)$, and a constant $C<\infty$,  such that  for every map $f\colon Y\to L^p$, 
\begin{equation}\label{eq:weakGeneralizedExp}
\sum_{g,g'\in G_n}\|f(g)-f(g')\|^p\nu_n(g,g')\leqslant C\Lip(f)^p,
\end{equation}
where $\Lip(f)$ is the Lipschitz norm of $f$.
We define $\mu_n$ as follows: $$\mu_n(g,g')=\frac{1}{|G_n|}\sum_{x\in G_n}\nu_n(xg,xg').$$
Then, given a function $f\colon G_n\to L^p$, we consider $\tilde{f}\colon G_n\to \ell^p(G_n,L^p)$ defined as $\tilde{f}(g)(h)=f(h g)-f(h)$ for all $g,h\in G_n$. Note that $$\Lip(\tilde{f})=\max_{s\in S_n}\left(\sum_{g\in G_n}\|f(g)-f(gs)\|^{2}\right)^{1/2}$$ Applying  (\ref {eq:weakGeneralizedExp}) to $\tilde{f}$ immediately yields (\ref {eq:fortgeneralizedexp}), ending the proof of the theorem.
\end{proof}

It is natural to ask whether our box space examples satisfy more ``expander like" Poincar\'e inequalities, i.e.\ where the sequence of measures $\mu_n$ can be replaced by the counting measure on some unbounded sequence of subsets of the form $A_n\times A_n\subseteq Y_n\times Y_n$. This is answered negatively in the Appendix, showing that the previous result is in some sense optimal. 

\subsection{Applications in K-theory}
The use of expanders in K-theory and operator algebra, especially, in the quest for counterexamples in higher index theory,
has been prevalent up to now. Certain classes of Margulis-type expanders (= the box spaces of Property T or Property~$\tau$ groups)  
are known to be counterexamples to (strong variants of) the Baum-Connes conjecture, 
although they do satisfy the coarse analogue of the Novikov conjecture and for some of them, even the maximal coarse Baum-Connes conjecture~\cites{HLS, WYu1, WYu2, OOY}. 

In \cite{CWYu}, the authors introduced the notion of \emph{fibered coarse embedding} into a Hilbert space~\cite{CWYu}*{Example 2.4}, which
is  a far generalization of a coarse embedding, sufficient for the maximal coarse Baum-Connes conjecture to hold. Instead of giving a precise definition here, let us only recall that a box space of a group $G$ admits a fibered coarse embedding if and only if $G$ has the Haagerup property~\cites{CWW,Finn}. 

 It is worthwhile to notice that by~\cite{Tu} (see also \cite{OOY}*{Corollary 4.18}) and \cite{MaNe}*{Section 10.5},  the maximal coarse Baum-Connes conjecture does hold for all the box spaces 
 that are constructed in \S \ref{sec:construc}. It follows, in particular, that the box spaces of Theorem~\ref{thm:SL2}  and Theorem~\ref{thm:wreath} provide interesting examples of metric spaces with bounded geometry satisfying the maximal coarse Baum-Connes conjecture and yet not admitting a fibered coarse embedding into a Hilbert space.
 Such examples can be obtained simply by taking any box space of $\Z^2\rtimes \SL(2,\Z)$, but once again, the originality of our examples comes from the fact that  non-\{fibered coarse embeddability\} of our box spaces is linked to relative expansion as opposed to expansion.

For the sake of completeness, in \S \ref{sec:fibered}, we slightly modify the construction of Theorem~\ref{thm:wreath}  in order to provide an example of a box space which does admit a fibered coarse embedding (and still does not coarsely embed into a Hilbert space, and does not weakly contain any expander).

\subsection{Organization}
In \S \ref{sec:prelim}, we recall some basic facts about expanders, coarse embeddings of abelian groups, and about the box space constructed in \cite{AGS}.
In \S \ref{sec:proofpropRP}, we prove Proposition \ref{PropIntro:RelativePoincare}, and in \S \ref{sec:proofPropext}, we prove Proposition \ref{sec:proofPropext}. In  \S \ref{sec:construc}, we 
prove our main result, Theorem \ref{ThmIntro:MainExample}. We also provide alternative constructions of box spaces which do not coarsely embed into a Hilbert space, and yet are not expanders relative to any sequence of subgroups, see Theorem \ref{thm:wreath}. Finally, in the last section we raise some open questions.

\subsection*{Acknowledgment} We are grateful to Masato Mimura for interesting suggestions. We thank Ana Khukhro, Mikhail Ostrovskii, Yves Stalder, and the anonymous referee for their remarks and corrections.

\section{Preliminaries}\label{sec:prelim}

\subsection{Expanders and Lipschitz embeddings}
Expanders have been pointed out by Gromov  as an obstruction for a metric space to coarsely embed into a Hilbert space~\cites{M,Gr_sp,Gr_rw}. 
This appeared in the context of his approach to the Novikov conjecture on the homotopy invariance of higher signatures.
We refer the reader to wonderful a monograph~\cite{Lub} and a survey~\cite{HLW} for an extensive information on expanders and their ubiquitous applications,
see also~\cite{NYu} for a recent account on their use in the context of the celebrated Baum-Connes and Novikov conjectures.

Let $1\leqslant p<\infty$.
Given a finite connected graph $X$, let $\beta_p(X)$ be the minimal $\beta>0$ such that
\begin{equation}\label{trueexpander}
    \frac{1}{|X|^2}\sum_{x,y\in X}\|f(x)-f(y)\|^p\leqslant \frac{\beta}{|X|}\sum_{x\sim y}\|f(x)-f(y)\|^p,
\end{equation}
for all maps  $f$ from  $X$ to an $L^p$-space.
It is easy to see (applying this inequality to the indicatrice function of a subset) that a sequence of finite connected graphs $(X_n)_{n\in \N}$ with uniformly bounded degree, 
$\vert X_n\vert\to\infty$ as $n\to \infty$, and such that $\sup_{n\in \N}\beta_p(X_n)>0$ is an expander. 
The converse is also true: for $p=2$, this is due to Alon \cite{A}, and the general case is due to Matou\v{s}ek~\cite{M}.

For convenience, we call an individual graph satisfying (\ref{trueexpander}) a  \emph{$(p,\beta)$-expander} graph.

\noindent{\bf Notation:} Given a set $X$ and a subset $A\subseteq X$, the complement of $A$ in $X$ is  denoted by $A^c$.  If $X$ is a metric space and $r\geqslant 0$, 
the \emph{$r$-neighborhood} of $A$ is denoted by $$[A]_r=\{x\in X, \; d(x,A)\leqslant r\}.$$

The following known fact is a special case of~\cite{BLMN}*{Lemma 5.4}. For convenience, we provide a short proof.

\begin{lem}\label{lem:subsexpander}
Let $1\leqslant p<\infty$, $\beta>0$ and $\alpha\in (0,1]$. Let $X$ be a $(p,\beta)$-expander graph of degree $\leqslant d$, and let $A$ be a subset of $X$ of  cardinality $\geqslant \alpha |X|$. 
Then there exist $\beta', c'>0$, only depending on $p,\beta,\alpha$ and $d$, such that $$\diam(A)\geqslant c'\diam(X),$$ and for all $1$-Lipschitz maps $f$ from  $A$ (equipped with the induced metric) to an $L^p$-space, we have 
\begin{equation}\label{subexpander}
    \frac{1}{|A|^2}\sum_{x,y\in A}\|f(x)-f(y)\|^p\leqslant \beta'.
\end{equation}
\end{lem}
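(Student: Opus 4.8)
The plan is to extract from the $(p,\beta)$-expander hypothesis a lower bound on the Cheeger constant, and then feed the resulting ball-growth and far-set-decay estimates into both assertions. First I would apply (\ref{trueexpander}) to the scalar indicator $f=\mathbf{1}_B$ of a subset $B$ with $|B|\leqslant |X|/2$. Since $\sum_{x,y}|\mathbf{1}_B(x)-\mathbf{1}_B(y)|^p=2|B||B^c|$ and $\sum_{x\sim y}|\mathbf{1}_B(x)-\mathbf{1}_B(y)|^p=|\partial B|$, this yields $|\partial B|\geqslant |B|/\beta$, i.e. $h(X)\geqslant 1/\beta=:c$. Converting edge expansion to vertex expansion through the degree bound gives $|[B]_1|\geqslant (1+c/d)|B|$ whenever $|B|\leqslant |X|/2$; iterating, balls grow by the factor $(1+c/d)$ until they fill half of $X$.

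For the inequality (\ref{subexpander}), I would extend a given $1$-Lipschitz map $f\colon A\to L^p$ to $\tilde f\colon X\to L^p$ by a nearest-point rule, $\tilde f(x)=f(\pi(x))$ with $\pi(x)\in A$ a closest point to $x$. Applying (\ref{trueexpander}) to $\tilde f$, discarding all but the pairs inside $A$ (on which $\tilde f=f$), and using $|A|\geqslant \alpha|X|$, gives
$$\frac{1}{|A|^2}\sum_{x,y\in A}\|f(x)-f(y)\|^p \leqslant \frac{1}{\alpha^2}\cdot\frac{\beta}{|X|}\sum_{x\sim y}\|\tilde f(x)-\tilde f(y)\|^p.$$
Thus everything reduces to bounding the edge-energy of $\tilde f$ by $O(|X|)$. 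Since $f$ is $1$-Lipschitz for the induced (subspace) metric, for an edge $\{x,y\}$ one has $\|\tilde f(x)-\tilde f(y)\|\leqslant d(\pi(x),\pi(y))\leqslant d(x,A)+d(y,A)+1$, so after a convexity estimate it suffices to prove $\sum_{x\in X}d(x,A)^p=O_{p,c,d,\alpha}(|X|)$.

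This last bound is exactly where expansion re-enters. Writing $G_r=\{x:d(x,A)\geqslant r\}$, the vertices of $G_r$ that send an edge outside $G_r$ lie at distance exactly $r$ from $A$, so the edge-expansion bound $|\partial G_r|\geqslant c|G_r|$ (valid once $|G_r|\leqslant |X|/2$) forces $|G_{r+1}|\leqslant (1-c/d)|G_r|$; meanwhile the ball-growth estimate applied to $[A]_{r-1}$ (starting from $|A|\geqslant \alpha|X|$) shows $|G_r|\leqslant |X|/2$ for all $r\geqslant r_0$, with $r_0$ depending only on $c,d,\alpha$. Hence $|G_r|$ decays geometrically and $\sum_x d(x,A)^p=\sum_{r\geqslant 1}(r^p-(r-1)^p)|G_r|\leqslant C_0|X|$, completing the inequality with $\beta'=\beta C/\alpha^2$. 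The diameter statement follows from the same two estimates in opposite directions: the volume bound $\alpha|X|\leqslant |A|\leqslant |B(a_0,\diam(A))|\leqslant d^{\diam(A)+1}$ gives $\diam(A)\gtrsim \log|X|$, while ball-growth gives $\diam(X)\lesssim \log|X|$; comparing the two yields $\diam(A)\geqslant c'\diam(X)$ once $|X|$ exceeds a threshold depending only on the parameters (the finitely many smaller graphs being handled directly).

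The main obstacle I anticipate is the extension step: a genuine Lipschitz extension of an $L^p$-valued map off $A$ need not exist with a dimension-free constant, and the nearest-point extension is only $O(\diam(X))$-Lipschitz. The point to get right is that one does \emph{not} need a globally Lipschitz extension, but only one of total edge-energy $O(|X|)$, and that this is guaranteed precisely by the geometric decay of the far-sets $G_r$ --- that is, by expansion. The remaining work is bookkeeping: tracking the dependence of $c$, $r_0$, $C_0$ and the diameter threshold on $(p,\beta,\alpha,d)$ alone.
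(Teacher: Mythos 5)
Your proposal is correct and follows essentially the same route as the paper's proof: both extend $f$ to all of $X$ by a nearest-point rule, derive geometric decay of the far-sets $[A]_r^c$ from the Cheeger-type bound implied by (\ref{trueexpander}), bound the edge-energy of the extension by $O(|X|)$ via a convergent series, and conclude by restricting the expander inequality to $A$ using $|A|\geqslant \alpha|X|$. The only cosmetic differences are that the paper uses a relative Cheeger constant $h_\alpha$ (over sets of size at most $(1-\alpha)|X|$) where you run a two-phase growth/decay argument pivoting at $|X|/2$, and it sums the shell contributions $(2i+4)^p$ directly rather than reducing to $\sum_x d(x,A)^p$ by convexity.
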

\begin{proof}

The first statement is an easy exercise using that $h(X)$ is bounded below by a function of $\beta$. Let us show the second statement. 
Denote by $$h_{\alpha}=h_{\alpha}(X):=\min_{1\leqslant |\Omega|\leqslant (1-\alpha)\vert X\vert}\frac{|\partial \Omega|}{|\Omega|}.$$ 
One easily sees that $h_{\alpha}(X)$ is bounded below by some function of $h(X)$ and $\alpha$.
For all $i\in \N$, let $U_i:=[A]_i^c$. Observe that $U_i$ is decreasing and $\vert U_i\setminus U_{i+1}\vert\geqslant  \vert \partial U_i\vert/d \geqslant  (h_{\alpha}/d)\vert U_i\vert$, so that $$|U_i|\geqslant (1+h_{\alpha}/d)|U_{i+1}|.$$
This implies that $$|U_i|\leqslant \frac{|U_0|}{(1+h_\alpha/d)^i}\leqslant \frac{|X|}{(1+h_\alpha/d)^i}.$$
Let us denote $V_i:=U_i\setminus U_{i+1}$.

Now let us extend  $f$ to all of $X$ in the following way: for every point $x$ in $A^c$, choose a point $a_x$ in $A$ at minimal distance from $x$ and let $f(x):=f(a_x)$. Observe that if $y$ is a neighbour of $x$, then the distance from $a_x$ to $a_y$ is at most $d(x,A)+d(y,A)+1$. Now if $x$ belongs to $V_i$, then 
$d(a_x,a_y)\leqslant 2i+4.$  Since $f$ is $1$-Lipschitz, it follows that for all pairs of neighbours  $x$ and $y$ such that $x$ is in $V_i$, one has $\|f(x)-f(y)\|^p\leqslant 2^p$.

Moreover, for $x\in A$, one has $\|f(x)-f(y)\|^p\leqslant 1$. Therefore, 
\begin{eqnarray*}
\sum_{x\sim y}\|f(x)-f(y)\|^p & \leqslant & d|A|+ \sum_{i}\left(\sum_{x\in V_i, y\sim x} \|f(x)-f(y)\|^p\right)\\
                                     & \leqslant & d|X| +|X|\theta(h_{\alpha},d,p),
\end{eqnarray*}
where $\theta(h_{\alpha},d,p)=\sum_{i=0}^{\infty} (2i+4)^p/(1+h_{\alpha}/d)^i$, which is obviously a converging series.
We now apply (\ref{trueexpander}) to $f$:
\begin{eqnarray*}
 \frac{1}{|X|^2}\sum_{x,y\in X}\|f(x)-f(y)\|^p & \leqslant & \frac{\beta}{|X|}\sum_{x\sim y}\|f(x)-f(y)\|^p\\
                                                         &  \leqslant & \beta(d+\theta(h_{\alpha},d,p)).
\end{eqnarray*}
Since
 $$\frac{1}{|A|^2}\sum_{x,y\in A}\|f(x)-f(y)\|^p \leqslant  \frac{1}{\alpha^2|X|^2}\sum_{x,y\in X}\|f(x)-f(y)\|^p,$$
we deduce the lemma with $\beta':=\beta(d+\theta(h_{\alpha},d,p))/\alpha^2$.
\end{proof}
 
An immediate consequence of (\ref{subexpander}) is 
\begin{cor}\label{cor:proportion}
Under the assumptions of Lemma \ref{lem:subsexpander}, there is $x_0\in X$  such that at least $|A|/2$ points of $A$ are mapped  at distance at most $2\sqrt{\beta'}$ from $x_0$. Moreover, if $A=X$, then we can take $\beta'_p:=\beta d$.
\end{cor}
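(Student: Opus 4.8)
The plan is to read off the concentration statement from the averaged bound (\ref{subexpander}) by a standard second-moment argument, carried out in the Hilbert-space ($p=2$) setting where the stated constant $2\sqrt{\beta'}$ appears. Write $v_x:=f(x)$ for $x\in A$ and let $\bar v:=\frac{1}{|A|}\sum_{x\in A}v_x$ be the barycenter of the image. The first step is the elementary Hilbert-space identity
$$\frac{1}{|A|^2}\sum_{x,y\in A}\|v_x-v_y\|^2=\frac{2}{|A|}\sum_{x\in A}\|v_x-\bar v\|^2,$$
obtained by expanding the squared norms. Combined with (\ref{subexpander}) it gives $\frac{1}{|A|}\sum_{x\in A}\|v_x-\bar v\|^2\leqslant \beta'/2$.

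Next I would apply Chebyshev's inequality: as the mean of $\|v_x-\bar v\|^2$ over $x\in A$ is at most $\beta'/2$, at most $|A|/2$ of the points satisfy $\|v_x-\bar v\|^2\geqslant \beta'$. Hence at least $|A|/2$ points $x\in A$ obey $\|f(x)-\bar v\|<\sqrt{\beta'}$. Since $\bar v$ need not lie in the image of $f$, I would then choose $x_0$ to be one of these good points; for any other good point $x$ the triangle inequality yields $\|f(x)-f(x_0)\|\leqslant \|f(x)-\bar v\|+\|\bar v-f(x_0)\|<2\sqrt{\beta'}$, which is exactly where the factor $2$ is produced. This gives the required center $x_0\in A\subseteq X$ together with $\geqslant|A|/2$ points mapped within $2\sqrt{\beta'}$ of $f(x_0)$. (A direct Markov estimate on the $|A|^2$ ordered pairs followed by pigeonhole gives the same statement for every $p$, with the constant $(2\beta')^{1/p}$.)

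For the last assertion, when $A=X$ I would skip the extension construction of Lemma \ref{lem:subsexpander} and instead feed the $1$-Lipschitz map $f$ directly into the defining inequality (\ref{trueexpander}). Since $\|f(x)-f(y)\|\leqslant 1$ whenever $x\sim y$ and there are at most $d|X|$ such ordered pairs, the right-hand side of (\ref{trueexpander}) is at most $\beta d$, so $\frac{1}{|X|^2}\sum_{x,y\in X}\|f(x)-f(y)\|^2\leqslant \beta d$; one then repeats the preceding argument with $\beta':=\beta d$. The whole statement is routine; the only point requiring a little care is passing from the abstract barycenter $\bar v$ to an actual vertex $x_0$ of $X$, and this is precisely what accounts for the harmless factor of $2$.
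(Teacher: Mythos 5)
Your proposal is correct, and it fills in exactly what the paper leaves implicit: the corollary is stated as an ``immediate consequence'' of (\ref{subexpander}) with no written proof, and your second-moment derivation (barycenter plus Chebyshev plus triangle inequality for $p=2$, with the Markov-on-pairs/pigeonhole variant you note parenthetically covering general $p$) is precisely that intended argument, producing the constant $2\sqrt{\beta'}$. Your treatment of the case $A=X$ also matches the paper's: plugging the $1$-Lipschitz map directly into (\ref{trueexpander}) recovers $\beta'=\beta d$, which is what the proof of Lemma \ref{lem:subsexpander} gives when $\alpha=1$ and the extension step is vacuous.
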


\subsection{A box space of the free group that coarsely embeds into a Hilbert space}

Our main result, Theorem \ref{ThmIntro:MainExample}, relies on a recent construction by Arzhantseva--Guentner--\v{S}pakula of a box space of the free group of rank $m\geqslant 2$, equipped with its standard generating subset, which coarsely embed into a Hilbert space~\cite{AGS}.

Let us start with some notation. Let $G$ be a group. For every $n\geqslant 0$, define inductively the characteristic subgroup  $\Gamma_n(G)$ of $G$ as $\Gamma_0(G):=G$, and $\Gamma_{n+1}(G)$ is the subgroup of $\Gamma_n(G)$ generated by squares of elements of $\Gamma_n(G)$. Observe that for all $l,k\geqslant 0$, $$\Gamma_{k+l}(G):=\Gamma_k(\Gamma_l(G)).$$
If $G$ is the free group $\F_m$ on $m\geqslant 2$ generators, then we denote by $H_n:= \F_m/ \Gamma_n(\F_m)$ the corresponding quotient. Denote by $T_n=\{t_1^{(n)},\ldots,t_m^{(n)}\}$ the image of the standard generating set of $\F_m$ in $H_n$.  
 
Our central tool is the following observation.
\begin{lem}\label{lem:factorization}
Let $H$ be a finite group of order $2^n$. Then $\Gamma_n(H)=\{e\}.$ In particular, if $H$ admits a set of generators $S=\{s_1,\ldots,s_m\}$, then there is an 
epimorphism $H_n\twoheadrightarrow H$ mapping each $t_i^{(n)}$ to  $s_i$.
\end{lem}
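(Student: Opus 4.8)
The plan is to establish the order statement $\Gamma_n(H)=\{e\}$ by an induction showing that the order of $\Gamma_k(H)$ drops by at least a factor of $2$ at each step, and then to deduce the factorization from the universal property of $\F_m$ together with the naturality of the operators $\Gamma_k$. First I would record the elementary observation that for any group $K$ the quotient $K/\Gamma_1(K)$ is an elementary abelian $2$-group: by definition $\Gamma_1(K)=\langle k^2 : k\in K\rangle$ contains every square, so every element of the quotient is an involution, and a group all of whose elements are involutions is abelian. The crux is then to upgrade this to a genuine decrease in order: if $K$ is a \emph{nontrivial} finite $2$-group, then $\Gamma_1(K)$ is a \emph{proper} subgroup, so $|\Gamma_1(K)|\leqslant |K|/2$. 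Indeed, a nontrivial finite $2$-group is nilpotent, hence has nontrivial abelianization $K^{\mathrm{ab}}$, and a nontrivial finite abelian $2$-group surjects onto $\Z/2\Z$; the resulting surjection $K\twoheadrightarrow \Z/2\Z$ kills all squares, hence kills $\Gamma_1(K)$, so $\Gamma_1(K)\neq K$. (Equivalently, $\Gamma_1(K)$ is the Frattini subgroup of the $2$-group $K$, which is proper by the Burnside basis theorem.)

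With this in hand I would iterate using $\Gamma_{k+1}(H)=\Gamma_1(\Gamma_k(H))$. Each $\Gamma_k(H)$ is a subgroup of the $2$-group $H$ and hence is itself a finite $2$-group, so as long as it is nontrivial its order is at least halved at the next step. A short induction then yields $|\Gamma_k(H)|\leqslant 2^{\,n-k}$ for $0\leqslant k\leqslant n$ (once a trivial term is reached it stays trivial), and taking $k=n$ gives $\Gamma_n(H)=\{e\}$.

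For the factorization the essential input is the naturality of $\Gamma_k$: for any homomorphism $\psi\colon G\to G'$ one has $\psi(\Gamma_k(G))\subseteq\Gamma_k(G')$, which follows by induction on $k$ from $\psi(g^2)=\psi(g)^2$. By the universal property of the free group there is an epimorphism $\phi\colon \F_m\twoheadrightarrow H$ sending the standard generators $x_1,\dots,x_m$ to $s_1,\dots,s_m$. Applying naturality with $k=n$ together with the first part gives $\phi(\Gamma_n(\F_m))\subseteq \Gamma_n(H)=\{e\}$, so $\Gamma_n(\F_m)\subseteq\ker\phi$ and $\phi$ factors through $H_n=\F_m/\Gamma_n(\F_m)$, yielding the desired epimorphism $H_n\twoheadrightarrow H$ sending $t_i^{(n)}$ to $s_i$. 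The only step carrying real content is the strict drop in order for $\Gamma_1$ of a nontrivial finite $2$-group; the elementary-abelian observation, the order induction, and the naturality of $\Gamma_k$ are all formal.
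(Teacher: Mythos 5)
Your proof is correct and takes essentially the same route as the paper's: both hinge on the observation that a nontrivial finite $2$-group, being nilpotent, has nontrivial abelianization, whence $\Gamma_1$ is a proper subgroup, and then iterate (the paper organizes this as an induction on $n$ applied to the subgroup $\Gamma_1(H)$, you as the halving bound $|\Gamma_k(H)|\leqslant 2^{\,n-k}$ --- the same argument). Your explicit derivation of the factorization through $H_n$ via naturality of $\Gamma_k$ merely spells out what the paper leaves implicit.
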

\begin{proof}
We prove this statement by induction on $n$. The case $n=0$ being trivial, we assume $n\geqslant 1$.
Being nilpotent, $H$ has a non-trivial abelianization. In an abelian $2$-group the set of squares is obviously a proper subgroup. It follows that $\Gamma_1(H)$ is a proper subgroup of $H$ and therefore has order $2^j$, with $j< n$. By induction hypothesis, we have $\Gamma_{j+1}(H)=\Gamma_j(\Gamma_1(H))=\{e\}$, and in particular $\Gamma_n(H)=\{e\}.$ 
\end{proof}

\begin{thm}\cite{AGS}\label{thm:ags}
The box space  of $\F_m$ associated to its standard set of generators and to the sequence $\Gamma_n(\F_m)$ coarsely embeds into a Hilbert space.
\end{thm}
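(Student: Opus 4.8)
The plan is to produce, for each $n$, a coarse embedding $\phi_n\colon (H_n,d_{T_n})\to\HH$ with distortion functions $\rho,\gamma$ \emph{independent of $n$}, which is precisely what a coarse embedding of the sequence $(H_n)_{n\in\N}$ demands. Since each $H_n$ is a group and the metric is left-invariant, I would look for these embeddings in equivariant form. Concretely, it suffices to construct symmetric, conditionally negative definite functions $\psi_n\colon H_n\to[0,\infty)$ vanishing only at $e$ and satisfying
\[
\rho\bigl(|h|_{T_n}\bigr)^2\;\leqslant\;\psi_n(h)\;\leqslant\;\gamma\bigl(|h|_{T_n}\bigr)^2
\]
for proper nondecreasing $\rho,\gamma$ independent of $n$; the associated orbit maps then obey $\|\phi_n(x)-\phi_n(y)\|^2=\psi_n(x^{-1}y)$ and assemble into the desired coarse embedding. (Equivalently, one may work with the bounded-geometry characterization via positive-type kernels of controlled variation.) The model to descend is the tree: the Cayley graph of $\F_m$ is the $2m$-regular tree, on which word length $g\mapsto|g|$ is itself conditionally negative definite, being the squared norm of the edge-path cocycle $g\mapsto\mathbf{1}_{[e,g]}$ into $\ell^2$ of the edge set. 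The entire difficulty is to push this kernel down to the finite quotients $H_n$ without losing uniform control.

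To build $\psi_n$ I would exploit the filtration. By construction each successive quotient $\Gamma_k(\F_m)/\Gamma_{k+1}(\F_m)$ is elementary abelian, so $H_n$ is an iterated extension
\[
1\to \Gamma_k/\Gamma_{k+1}\to H_{k+1}\to H_k\to 1,\qquad 0\leqslant k<n,
\]
of $\mathbb{F}_2$-vector spaces, with $H_1=(\Z/2)^m$. On an elementary abelian $2$-group a Hamming-type length is conditionally negative definite and embeds into $\HH$ with modulus $\sqrt{\,\cdot\,}$ independent of the rank. I would therefore assemble $\psi_n$ as a weighted sum of kernels pulled back from the successive layers through the projections $p_k\colon H_n\to H_k$, with weights decaying geometrically in $k$. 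Conditional negative definiteness is stable under pullback and under nonnegative linear combination, and the geometric weights together with the Lipschitz property of each $p_k$ deliver the upper bound $\psi_n(h)\leqslant\gamma(|h|_{T_n})^2$ essentially for free.

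The main obstacle is the lower bound, that is, \emph{uniform properness}: I must rule out that an element $h$ which is far from $e$ in $(H_n,d_{T_n})$ is almost annihilated by every layer kernel as $n\to\infty$. This is exactly where the arithmetic of the $\Gamma_n$-filtration enters, in the spirit of Lemma~\ref{lem:factorization}: because $H_n$ is a $2$-group whose defining relations appear only deep in the filtration, an element of large word length ought to be ``detected'' at some bounded-depth layer $k$ with a contribution bounded below, forcing $\psi_n(h)$ to stay above a proper function of $|h|_{T_n}$ uniformly in $n$. Making this quantitative — pinning down, for each length $\ell$, a depth $k(\ell)$ at which every element of word length $\ell$ has nontrivial, weight-bounded image, and controlling how the fiberwise Hamming kernels compare to the genuine word metric across the extension — is the crux, and the step I expect to be genuinely hard. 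It is precisely this feature of the $\Gamma_n$-filtration that makes the box space embed, in contrast with filtrations yielding expanders, where no such uniformly proper kernel can exist.
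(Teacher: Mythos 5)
First, a point of reference: the paper offers no proof of this statement at all. It is imported verbatim from \cite{AGS}, the only added remark being that the argument there, written for $m=2$, goes through unchanged for all $m\geqslant 2$. So your proposal must be measured against the argument of \cite{AGS} itself, and measured that way it has a genuine gap --- one you flag yourself: the uniform lower bound (properness) for $\psi_n$ is never established, and this is not a deferrable technicality, it \emph{is} the theorem. Worse, the specific architecture you propose (summable, geometrically decaying weights $w_k$ on kernels pulled back through $p_k\colon H_n\to H_k$) makes that bound structurally unobtainable. Take $h$ a nontrivial element of the deepest layer $\Gamma_{n-1}(\F_m)/\Gamma_n(\F_m)\leqslant H_n$. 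Then $p_k(h)=e$ for every $k\leqslant n-1$, so every shallow term vanishes and $\psi_n(h)=w_n\psi^{(n)}(h)$. Since summability of the weights (needed for your upper bound) forces $w_n\to 0$, uniform properness would require the single kernel $\psi^{(n)}$ to satisfy $\psi^{(n)}(h)\geqslant w_n^{-1}\rho(|h|_{T_n})^2$ on that layer while keeping $\psi^{(n)}(s)\leqslant w_n^{-1}\gamma(1)^2$ on generators; after rescaling by $w_n$, this is precisely a conditionally negative definite kernel on $H_n$ that is uniformly Lipschitz and uniformly proper --- the equivariant form of the very statement to be proved, now demanded on the dominant part of $H_n$ (the deepest layer has index only $|H_{n-1}|$ in $H_n$, while its order is $2^{1+|H_{n-1}|(m-1)}$). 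The tower decomposition thus relocates the problem rather than reducing it. A second defect: the metric induced on a layer $\Gamma_k(\F_m)/\Gamma_{k+1}(\F_m)\leqslant H_{k+1}$ by the word metric of $H_{k+1}$ is a badly distorted metric, not the Hamming metric of its natural basis, and the ``layer component'' of a general element of $H_n$ is a coset datum rather than the image under a homomorphism, so the stability of conditional negative definiteness under pullback, which you invoke, does not apply to it.

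For comparison, the mechanism in \cite{AGS} is different and circumvents both issues. Since in any group the subgroup generated by squares contains the commutator subgroup (note $[a,b]=a^2(a^{-1}b)^2b^{-2}$), one has $\Gamma_k/\Gamma_{k+1}\cong H_1(\Gamma_k;\Z/2\Z)$, so each covering of Cayley graphs $(H_{k+1},T_{k+1})\to (H_k,T_k)$ is the $\Z/2\Z$-homology cover. In such a cover, every cycle upstairs projects to a cycle that is null-homologous mod $2$, hence crosses the preimage of each downstairs edge an even number of times; these edge-preimages therefore form an invariant wall structure, and wall structures pull back along the successive coverings, so $H_n$ carries walls coming from every level of the tower. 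Using that the girth of $(H_n,T_n)$ tends to infinity (the $\Gamma_n(\F_m)$ are nested with trivial intersection), one shows that the wall pseudometric is bounded below by a proper function of the graph metric, uniformly in $n$; since wall metrics embed into $L^1$, their square roots are conditionally negative definite, which yields exactly the uniform equivariant kernels you were after. So your general frame --- uniform, equivariant, conditionally negative definite kernels, with the tree and its edge-path cocycle as the model --- is consistent with \cite{AGS}, but the detection of long elements must come from this wall-plus-girth mechanism; a weighted sum of kernels over the layers of the filtration cannot supply it.
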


In ~\cite{AGS}, the result is stated for $m=2$, however, all the details and proofs hold without modifications
for all $m\geqslant 2$.

\subsection{Coarse embeddings of abelian groups}

\begin{lem}\label{lem:abelian}
Let $\theta\colon[0,\infty)\to [0,\infty)$ be a proper function. 
Let $(A_n,d_n)_{n\in \mathbb N}$ be a sequence of abelian groups equipped with  invariant metrics $d_n$ such that for all $r$, balls of radius $r$  have cardinality at most $\theta(r)$. Then $(A_n,d_n)$'s are \emph{uniformly amenable}: for all $\eps>0$, and all $r>0$, there exist $D>0$ and a subset  $F_n\subseteq A_n$ of diameter $\leqslant D$ such that for all $g\in A_n$ such that $d_n(e,g)\leqslant r$, $$|gF_n\vartriangle F_n|\leqslant \eps |F_n|.$$  
\end{lem}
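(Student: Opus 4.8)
The plan is to build each Følner set from the ``combinatorial balls'' $S_n:=\{g\in A_n:\ d_n(e,g)\le r\}$ and to take $F_n$ of the form $S_n^{k}$ (the $k$-fold product set) for a suitable $k$ that is bounded independently of $n$. Two facts here hold in an arbitrary group and I would record them first. Since $d_n$ is invariant, $S_n$ is symmetric and contains $e$, and the triangle inequality gives $S_n^{k}\subseteq\{g:\ d_n(e,g)\le kr\}$, so that $\diam(S_n^{k})\le 2kr$. Moreover, for every $g\in S_n$ one has $gS_n^{k}\subseteq S_n^{k+1}$, and applying the same to $g^{-1}$ (a bijection of $A_n$) a short computation gives
\begin{equation*}
|gS_n^{k}\vartriangle S_n^{k}|\le 2\bigl(|S_n^{k+1}|-|S_n^{k}|\bigr).
\end{equation*}
Thus it suffices to find $k$, bounded in terms of $\eps$ and $r$ only, with $|S_n^{k+1}|\le(1+\eps/2)|S_n^{k}|$; then $F_n:=S_n^{k}$ does the job with $D:=2kr$.

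The crucial point -- and the only place where commutativity is used -- is a growth estimate on $|S_n^{k}|$ that is uniform in $n$. Writing $s:=|S_n|\le\theta(r)$ and using that $A_n$ is abelian, every element of $S_n^{k}$ can be written as a product $\prod_{x\in S_n}x^{m_x}$ with $m_x\ge 0$ and $\sum_x m_x=k$; counting such multisets yields the uniform polynomial bound
\begin{equation*}
|S_n^{k}|\le\binom{k+s-1}{s-1}\le(k+s)^{\,s-1}.
\end{equation*}
In a non-abelian group only the exponential bound $|S_n^{k}|\le s^{k}$ is available; indeed, free groups satisfy every hypothesis except commutativity and are non-amenable, so this is genuinely where the abelian assumption enters.

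With this in hand the last step is a telescoping pigeonhole. Since $\prod_{k=1}^{K}\frac{|S_n^{k+1}|}{|S_n^{k}|}=\frac{|S_n^{K+1}|}{|S_n|}\le(K+s)^{\,s-1}$, the smallest of these $K$ ratios is at most $(K+s)^{(s-1)/K}=\exp\!\bigl((s-1)\log(K+s)/K\bigr)$, which tends to $1$ as $K\to\infty$ with $s$ (hence $r$) fixed. Choosing $K=K(\eps,r)$ so that this quantity is $\le 1+\eps/2$ produces, for each $n$, an index $k=k(n)\le K$ with $|S_n^{k+1}|\le(1+\eps/2)|S_n^{k}|$, and the first paragraph then gives the claim with $D=2Kr$. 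The main obstacle is precisely extracting the Følner radius \emph{uniformly} in $n$: everything hinges on the uniform polynomial growth bound, which trades the a priori group-dependent rate of amenability for a rate depending only on the cardinality $s\le\theta(r)$ of the ball $S_n$.
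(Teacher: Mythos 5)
Your proof is correct, and it takes a genuinely different route from the paper's. The paper passes to the subgroup $A_n(r)$ generated by a ball around the identity, picks a surjection $\pi_n\colon\Z^d\to A_n(r)$ (with $d$ the smallest integer exceeding $\theta(r)$) sending the standard basis onto that ball, fixes a single radius $R=R(\eps,d)$ such that $|B(v_i,R)\vartriangle B(e,R)|\leqslant \eps|B(e,R)|$ in $\Z^d$, and takes $F_n:=\pi_n(B(e,R))$; the diameter bound then comes, as in your argument, from the fact that $\pi_n$ maps word balls into metric balls of $A_n$. You instead work intrinsically in $A_n$: your candidate sets $S_n^k$ are the balls of the word metric of $S_n$, commutativity enters only through the multiset count $|S_n^k|\leqslant\binom{k+s-1}{s-1}$ with $s\leqslant\theta(r)$, and a pigeonhole on the telescoping product of growth ratios produces a good radius $k(n)\leqslant K(\eps,r)$. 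The paper's route is shorter, granting the standard fact that balls are F\o lner sets in $\Z^d$; your route is self-contained and, more importantly, it sidesteps a point that the paper's ``it follows'' elides: for a quotient map one only gets $|\pi_n(B(v_i,R))\vartriangle \pi_n(B(e,R))|\leqslant |B(v_i,R)\vartriangle B(e,R)|\leqslant \eps|B(e,R)|$, which is weaker than the required bound $\eps|\pi_n(B(e,R))|$, because $|\pi_n(B(e,R))|$ may be much smaller than $|B(e,R)|$ when the kernel of $\pi_n$ is large. Making that step rigorous requires either a level-set argument on the fiber-counting function $q\mapsto |B(e,R)\cap\pi_n^{-1}(q)|$, or precisely your observation that images of balls are balls of the quotient word metric combined with your growth-pigeonhole, in which case the chosen radius depends on $n$ but stays below a uniform $K$ --- which is all the lemma needs, since only $D$ must be independent of $n$. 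So your argument is not merely an alternative; it is in effect a completed version of the paper's.
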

\begin{proof}
Let $A_n(r)$ be the subgroup of $A_n$ generated by the ball of radius $r$ with respect to $d_n$, that we shall denote by $B_{d_n}(e,r)$. Observe that if $g$ is at distance $\leqslant r$ from $e$, then multiplication by $g$ preserves $A_n(r)$. 

Let $d$ be the smallest integer larger than $\theta(r)$: the cardinality of the ball of radius $r$ in $A_n$ being $\leqslant d$, there exists a surjective homomorphism $\pi_n\colon\;\Z^d\twoheadrightarrow A_n(r)$, mapping the standard basis $\{v_1,\ldots,v_d\}$ surjectively to the ball of radius $r$ in $A_n$.  
In $\Z^d$, the volume of balls satisfies $C_d^{-1}R^d\leqslant B_{\Z^d}(e,R)\leqslant C_dR^d$ for some constant $C_d\geqslant 1$. It follows that there is a constant $K_d\in \N$ such that for all $R$, the ball of radius $2R$ in $\Z^d$ can be covered by at most $K_d$ balls of radius $R$. 

We pick some $R\in \N$ larger than $2K_d/\eps$. For every $k\in \N$, denote  $W_n(k):=\pi_n(B_{\Z^d}(e,k))$. We deduce from what precedes that $W_n(2R)$ can be covered by at most $K_d$ translates of $W_n(R)$. On the other hand, for every $g\in B_{d_n}(e,r)$, and every $k\geqslant 1$,  $$gW_n(k)\vartriangle W_n(k)\subseteq W_n(k+1)\setminus W_n(k-1).$$
Moreover, we have
$$\sum_{k=R}^{2R-1} |W_n(k+1)\setminus W_n(k-1)|\leqslant 2|W_n(2R)|.$$
By the pigeonhole argument, we deduce that there exists $R\leqslant k\leqslant 2R$, such that for all $g\in B_{d_n}(e,r)$,
$$|gW_n(k)\vartriangle W_n(k)|\leqslant  2|W_n(2R)|/R\leqslant  2K_d|W_n(k)|/R\leqslant \eps |W_n(k)|.$$
 It follows that $F_n:= W_n(k)$ satisfies $|gF_n\vartriangle F_n|\leqslant \eps |F_n|.$ 
On the other hand, the diameter of $F_n$, with respect to $d_n$, is less than $D:=2Rr$, so we are done.
\end{proof}

A coarse embedding of a sequence $(X_n, d_n)_{n\in\mathbb N}$ is often called a \emph{uniform coarse embedding} in contrast to  
a \emph{coarse embedding} $\phi\colon X\to E$ of an individual space $(X,d)$, viewed as a trivial sequence consisting of a single
space. An equivariant coarse embedding of a sequence then leads to a uniform variant of the Haagerup property.

A sequence of groups $(G_n,d_n)_{n\in \mathbb N}$ equipped with left-invariant distances satisfies the \emph{Haagerup Property uniformly}, if the $G_n$'s admit affine isometric actions $(\HH,\pi_n,b_n)$ such that $(b_n)_{n\in \mathbb N}$ is a uniform sequence of coarse embeddings, that is, with proper functions $\rho$ and $\gamma$ independent of $n$.
\begin{cor}\label{cor:abelian}
With the notation of Lemma \ref{lem:abelian}, the sequence of metric spaces $(A_n,d_n)$ satisfies the Haagerup property uniformly. In particular, such a sequence (uniformly) coarsely embeds into a Hilbert space. 
\end{cor}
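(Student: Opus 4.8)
The plan is to upgrade the uniform amenability of Lemma~\ref{lem:abelian} to a uniform family of metrically proper affine isometric actions. Concretely, I would construct for each $n$ a conditionally negative definite function $\psi_n\colon A_n\to[0,\infty)$ with $\psi_n(e)=0$ such that $\tilde\rho(d_n(e,g))\leqslant \psi_n(g)\leqslant \tilde\gamma(d_n(e,g))$ for some proper functions $\tilde\rho,\tilde\gamma$ independent of $n$. The GNS construction attached to such a $\psi_n$ produces $(\HH,\pi_n,b_n)$ with $\|b_n(g)-b_n(h)\|^2=\psi_n(g^{-1}h)$, so that the single sequence $(b_n)$ simultaneously witnesses the uniform Haagerup property and provides a uniform coarse embedding into Hilbert space (with $\rho=\sqrt{\tilde\rho}$, $\gamma=\sqrt{\tilde\gamma}$). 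Thus both assertions of the corollary reduce to building $\psi_n$ with uniform two-sided control.

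To build $\psi_n$, I would fix a sequence of scales $R_k\to\infty$ and, for each $k$, apply Lemma~\ref{lem:abelian} to obtain Følner sets $F_{n,k}\subseteq A_n$; here I would use that in the proof of that lemma $F_{n,k}$ is the image $\pi_n(B(e,R_k))$ of an $\ell^1$-ball in some $\Z^{d}$ whose dimension $d\leqslant\theta(R_k+1)$ is bounded independently of $n$. Setting $\xi_{n,k}:=\mathbf{1}_{F_{n,k}}/\sqrt{|F_{n,k}|}\in\ell^2(A_n)$ and $\psi_{n,k}(g):=\|\lambda(g)\xi_{n,k}-\xi_{n,k}\|^2=|gF_{n,k}\vartriangle F_{n,k}|/|F_{n,k}|$ (with $\lambda$ the left regular representation), each $\psi_{n,k}$ is conditionally negative definite, is bounded by $2$, and, by the almost-invariance in Lemma~\ref{lem:abelian}, satisfies $\psi_{n,k}(g)\leqslant \eps_k$ whenever $d_n(e,g)\leqslant R_k$. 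I would then define $\psi_n:=\sum_k c_k\psi_{n,k}$ with weights $c_k>0$ and tolerances $\eps_k$ chosen so that $\sum_k c_k\eps_k<\infty$; this already forces the finite, $n$-independent upper bound $\psi_n(g)\leqslant 2\sum_{R_k<d_n(e,g)}c_k+\sum_k c_k\eps_k=:\tilde\gamma(d_n(e,g))$, which is proper as soon as $\sum_k c_k=\infty$ (e.g.\ $R_k=2^k$, $c_k\equiv1$, $\eps_k=2^{-k}$, giving $\tilde\gamma\asymp\log$).

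The delicate point, and the main obstacle, is the \emph{lower} bound, i.e.\ properness uniform in $n$: the coboundary functions $\psi_{n,k}$ are never bounded below by almost-invariance alone, so properness must come from the \emph{shape} of the Følner sets, and this is exactly where the uniform bounded geometry is indispensable. Since $F_{n,k}$ is the image of an $\ell^1$-ball of radius $R_k$ in a $\Z^{d}$ of bounded rank, translating it by an element of word length $N=d_n(e,g)$ should displace it by a definite proportion for \emph{every} scale $R_k\lesssim N$ (when $R_k\ll N$ the translate becomes essentially disjoint and $\psi_{n,k}(g)\asymp 2$; when $R_k\asymp N$ it is still bounded below), provided $R_k$ stays below the injectivity scale of $\pi_n$ in the relevant directions. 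Summing the $\asymp\log N$ good scales then yields a proper lower bound $\psi_n(g)\geqslant\tilde\rho(d_n(e,g))$, uniform in $n$. The genuine work is to establish this displacement estimate with constants depending only on $\theta$: one must control the ``wrap-around'' of the ball image through $\ker\pi_n$, separating the free and the small-torsion directions (a torsion direction of small order merely saturates $F_{n,k}$ and contributes at small distances, while properness is supplied by the large-diameter directions as in the model case of $\Z$).

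Finally I would assemble $b_n:=\bigoplus_k\sqrt{c_k}\,b_{n,k}$ with $b_{n,k}(g)=\lambda(g)\xi_{n,k}-\xi_{n,k}$, which is the desired uniform sequence of proper cocycles and proves both statements at once. As a shortcut for the non-equivariant conclusion, one may instead observe that the Følner sets of Lemma~\ref{lem:abelian} directly furnish a uniform Property~A structure for $(A_n,d_n)$, from which a uniform coarse embedding into Hilbert space follows by the standard Property~A $\Rightarrow$ embeddability argument applied to the coarse disjoint union; the explicit cocycle above is what additionally yields the equivariant ``uniform Haagerup'' statement.
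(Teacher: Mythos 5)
Your overall skeleton --- weighted sums of coboundary cocycles $b_{n,k}(g)=\lambda(g)\xi_{n,k}-\xi_{n,k}$ built from the F\o lner sets of Lemma \ref{lem:abelian}, assembled into a conditionally negative definite function via Schoenberg/GNS --- is exactly the standard proof that an amenable group has the Haagerup property, which is all the paper invokes (its proof is one line: Lemma \ref{lem:abelian} plus the argument of \cite{AW} and \cite{BCV}, the uniformity in $n$ coming for free from the uniformity of the lemma). Your treatment of the upper bound is correct.

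The gap is in the lower bound, which you yourself flag as ``the genuine work'' and never establish; worse, the route you sketch for it --- a displacement-proportion estimate for translates of the image of an $\ell^1$-ball in $\Z^d$, with control of ``wrap-around'' through $\ker\pi_n$ and a separation of torsion and free directions --- is both unproven and unnecessary. What you are missing is that the conclusion of Lemma \ref{lem:abelian} already contains the uniform statement you need: the F\o lner set $F_{n,k}$ has diameter at most $D_k$ in $(A_n,d_n)$, with $D_k$ depending only on $(\eps_k,R_k,\theta)$ and not on $n$. Since $d_n$ is an invariant metric, if $gF_{n,k}\cap F_{n,k}\neq\emptyset$, say $gf_1=f_2$ with $f_1,f_2\in F_{n,k}$, then $d_n(e,g)=d_n(f_1,f_2)\leqslant D_k$. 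Hence for every $g$ with $d_n(e,g)>D_k$ the translate $gF_{n,k}$ is \emph{exactly} disjoint from $F_{n,k}$, so $\psi_{n,k}(g)=\|\lambda(g)\xi_{n,k}-\xi_{n,k}\|^2=2$ on the nose: no ``essentially disjoint'', no injectivity-scale proviso, no analysis of the shape of the F\o lner sets. Consequently $\psi_n(g)\geqslant 2\,\#\{k:\,D_k<d_n(e,g)\}$, a proper lower bound independent of $n$ (properness requires no rate, only that each $D_k$ be finite). With this one-line observation your construction closes up and becomes precisely the argument the paper cites; without it, the proposal is incomplete at its central step, and the wrap-around analysis is a red herring, because disjointness is a statement about subsets of $(A_n,d_n)$ itself, not about the covering copy of $\Z^d$.
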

\begin{proof}
Thanks to Lemma \ref{lem:abelian}, this simply follows from the standard proof that an amenable group satisfies the Haagerup property~\cites{AW,BCV}.
\end{proof}

\section{Proof of Proposition \ref{PropIntro:exactsequence}}

In this section we adopt the notation of Proposition \ref{PropIntro:exactsequence}.

Let $\phi_n\colon(N_n,d_{S_n})\to \HH$  and  $\psi_n\colon (Q_n,T_n)\to \HH$  be two coarse embeddings, i.e.\ such that there exist two proper functions $\rho, \gamma\colon [0,\infty)\to [0,\infty)$ such that for all $n\in \N$ and all $x,y\in N_n$,
\begin{equation}\label{eq:compressionphi}
\rho(d_{S_n}(x,y))\leqslant \|\phi_n(x)-\phi_n(y)\|\leqslant \gamma(d_{S_n}(x,y)),
\end{equation}
and similarly, for all $x,y\in Q_n$,
\begin{equation}\label{eq:compressionpsi}
\rho(d_{T_n}(x,y))\leqslant \|\psi_n(x)-\psi_n(y)\|\leqslant \gamma(d_{T_n}(x,y)).
\end{equation}
Using the argument of \cite{CTV}*{Section 3.4}, one can assume in addition that $\gamma(t)=t/K$ . Let us briefly recall the argument: by \cite{CTV}*{Lemma 3.11},  for every proper function $u\colon[1,\infty)\to [1/K^2,\infty)$, there exists a proper Bernstein function $F$ such that $F\leqslant u$. Pick one such $F$ such that $F(\gamma^2(t))\leqslant (t/K)^2$ for all $t\geqslant 1$. By Schoenberg's theorem \cite{S}, $(x,y)\to F(\|\phi_n(x)-\phi_n(y)\|^2)$ is a conditionally negative definite kernel on $N_n$, hence equal to $\|\phi'(x)-\phi'(y)\|^2$ for some map $\phi'\colon N_n\to \HH'$.  It follows by construction that $\phi'$ satisfies (\ref {eq:compressionphi}) with $\gamma(t)=t/K$.

Note that for $Q_n$, which is equipped with a word metric, this adjustment simply follows by rescaling the Hilbert space norm.

Let $\beta>0$, $d$ be such that for all $n$, $X_n$ is an $(2,\beta)$-expander of degree $\leqslant d$.

Now, let $h_n\colon X_n\to G_n$ be $K$-Lipschitz maps and consider the composition $f_n:=\psi_n\circ \pi_n\circ h_n$, where $\pi_n$ is the projection from $G_n$ to $Q_n$. By construction, $f_n$ is a sequence of $1$-Lipschitz maps from $X_n$ to $\HH$. We apply Corollary~\ref{cor:proportion}, with $A:=X_n$ and deduce that there exist $x_n\in X_n$ and a subset $A_n\subseteq X_n$ of cardinality at least $|X_n|/2$ such that $f_n(A_n)$  is mapped inside the ball of radius $2\sqrt{\beta d}$ around $f_n(x_n)$. By properness of $\rho$, (\ref{eq:compressionpsi}) implies that $\pi_n\circ h_n(A_n)$ is mapped inside the ball $B(\pi_n\circ h_n(x_n), r)$, where $r:=\rho^{-1}(2\sqrt{\beta d})$. As this ball has cardinality at most $k^r$,  there exists a subset $A_n'$ of $A_n$ of size at least $ \vert A_n\vert/k^r\geqslant \vert X_n\vert/(2k^r)$ which is mapped by $\pi_n\circ h_n$ to a single point in $Q_n$. Therefore, $h_n(A_n')$ is mapped inside a coset of $N_n$. Up to composing $h_n$ by a left translation in $G_n$, we can assume that $h_n(A_n')$ is mapped into $N_n$. 

We now apply Corollary~\ref{cor:proportion} to the map $f'_n:=\phi_n\circ h_n\colon\;A_n'\to \HH$, this yields $\beta'>0$, and sequences $y_n\in A_n'$ and $A_n''\subseteq A_n'$ satisfying $|A_n''|\geqslant |A_n'|/2$, such that $f'_n(A_n'')\subseteq B(f'_n(y_n),2\sqrt{\beta'})$. We deduce that $h_n(A_n'')\subseteq B(y_n,r')$, where $r':=\rho^{-1}(2\sqrt{\beta'})$. Hence, there exists a subset of $A_n''$ of size at least $|A_n''|/k^{r'}\geqslant |X_n|/(4k^{r+r'})$ which is mapped by $h_n$ to a single point. This ends the proof of 
Proposition \ref{PropIntro:exactsequence}.

\section{Proof of Proposition \ref{PropIntro:RelativePoincare}}\label{sec:proofpropRP}

Let us start with an easy lemma.

\begin{lem}\label{lem:relativeT}
If the pair $(G,Y)$ has relative Property $T$, then for every finite generating subset $S$ of $G$, there exists a constant $C>0$ such that for every affine isometric action $(\HH,\pi,b)$ of $G$ one has
$\|b(y)\|^2\leqslant C\sum_{s\in S}\|b(s)\|^2$ for all $y\in Y$. 
\end{lem}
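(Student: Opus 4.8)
The plan is to argue by contradiction and to manufacture, out of a hypothetical sequence of ``bad'' actions, a single affine isometric action of $G$ whose $Y$-orbit is unbounded, contradicting the bounded-orbit characterization of relative Property $T$ recalled above (for every affine isometric action the $Y$-orbits are bounded).

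First I would suppose the conclusion fails: then for every $n$ there is an affine isometric action $(\HH_n,\pi_n,b_n)$ and an element $y_n\in Y$ with
\[
\|b_n(y_n)\|^2 > n\sum_{s\in S}\|b_n(s)\|^2.
\]
Set $E_n:=\sum_{s\in S}\|b_n(s)\|^2$. The inequality forces $b_n(y_n)\neq 0$, hence $b_n\not\equiv 0$; since $S$ generates $G$ and $b_n$ is a $1$-cocycle, this gives $E_n>0$. Because $(\HH,\pi,\lambda b)$ is again an affine isometric action for every $\lambda>0$, I may rescale and assume $E_n=1$, so that $a_n:=\|b_n(y_n)\|\to\infty$. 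The cocycle relation $b_n(gh)=b_n(g)+\pi_n(g)b_n(h)$, together with $\max_{s}\|b_n(s)\|\leqslant E_n^{1/2}=1$ (and $\|b_n(s^{-1})\|=\|b_n(s)\|$), yields the crude but crucial bound $\|b_n(g)\|\leqslant |g|_S$ for every $g\in G$, uniformly in $n$, where $|g|_S$ is the word length.

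Next I would assemble the actions. After passing to a subsequence I may assume $a_n\geqslant 2^n$; put $c_n:=a_n^{-1/2}$, so that $\sum_n c_n^2<\infty$ while $c_n a_n=a_n^{1/2}\to\infty$. On $\HH:=\bigoplus_n\HH_n$ with the orthogonal representation $\pi:=\bigoplus_n\pi_n$, define $b(g):=(c_n b_n(g))_n$. The uniform bound $\|b_n(g)\|\leqslant|g|_S$ gives $\|b(g)\|^2=\sum_n c_n^2\|b_n(g)\|^2\leqslant |g|_S^2\sum_n c_n^2<\infty$, so $b(g)\in\HH$, and the cocycle identity passes coordinatewise to $b$; hence $(\HH,\pi,b)$ is a genuine affine isometric action of $G$. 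But inspecting the $n$-th coordinate alone, $\|b(y_n)\|\geqslant c_n\|b_n(y_n)\|=a_n^{1/2}\to\infty$, and each $y_n$ lies in $Y$, so the $Y$-orbit of the origin is unbounded, contradicting relative Property $T$.

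The main obstacle, and the reason a direct ultraproduct limit does not work, is that the witnessing elements $y_n$ vary with $n$: relative Property $T$ bounds the orbit of the fixed set $Y$ in each single action, but extracting a uniform constant $C$ valid across all actions requires combining infinitely many actions into one. The direct-sum construction above does exactly this, and the weights $c_n$ must be chosen simultaneously square-summable (so that $b$ lands in $\HH$) and large enough that $c_n a_n\to\infty$ (so that the orbit genuinely blows up); the uniform word-length bound on $\|b_n(g)\|$ is precisely what makes these two requirements compatible.
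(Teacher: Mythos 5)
Your proof is correct and follows essentially the same route as the paper's: argue by contradiction, normalize the cocycles so that $\sum_{s\in S}\|b_n(s)\|^2\leqslant 1$, pass to a subsequence where $\|b_n(y_n)\|$ grows rapidly, and form a weighted direct sum $\bigoplus_n(\HH_n,\pi_n)$ with cocycle $b=\sum_n c_n b_n$ whose $Y$-orbit is unbounded, contradicting the bounded-orbit characterization of relative Property $T$. The only differences are cosmetic (the paper takes weights $c_n=1/n$ with $\|b_n(y_n)\|^2\geqslant 2^n$, you take $c_n=a_n^{-1/2}$), and your explicit word-length bound $\|b_n(g)\|\leqslant |g|_S$ justifying that $b(g)$ lies in the direct sum is a point the paper leaves implicit.
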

\begin{proof}
Assume on the contrary that there exists a sequence $(\HH_n,\pi_n,b_n)_{n\in\mathbb N}$ of isometric actions such that $\sum_{s\in S}\|b_n(s)\|^2\leqslant 1$ for all $s\in S$, and a sequence $y_n\in Y$ such that $\|b_n(y_n)\|\to \infty$ as $n\to\infty$. Up to passing to a subsequence, we can assume that $\|b_n(y_n)\|^2\geqslant 2^n$. Then consider the norm-preserving representation of $G$ obtained by taking the direct sum of $\bigoplus_n(\pi_n,\HH_n)$ and the cocycle defined by $b:=\sum_nb_n/n$. As $\|b(s)\|^2\leqslant \sum_n1/n^2$, $b$ is well defined. On the other hand, $\|b(y_n)\|^2\geqslant 2^n/n^2$, hence $b$ is not bounded on $Y$, a contradiction. 
\end{proof}

In this section, we adopt the notation of Proposition \ref{PropIntro:RelativePoincare}. We start with a map 
$f\colon Q\to \HH$. Consider the Hilbert space $\HH':=\ell^2(Q,\HH)$, which is a convenient way of denoting the orthogonal direct sum of copies of $\HH$ indexed by $Q$. Denote by $|\cdot|$ the norm on $\HH$ and by $\|\cdot\|$ the norm on $\HH'$. Let $\pi$ be the (norm-preserving) action of $G$ by right-translation and consider the affine action $\sigma$ of $G$ on $\HH'$ associated to the $1$-cocycle 
$$b(x):=f-\pi(x)f.$$ We have
$$\|b(x)\|^2=\sum_{g\in Q}|f(g)-f(gx)|^2.$$
Now using Lemma~\ref{lem:relativeT}, we deduce that $\|b(y)\|^2\leqslant C\sum_{s\in S}\|b(s)\|^2$ for some constant $C>0$ only depending on $G$ and $S$. We  precisely recover (\ref{eq:relativeT}). 

The proof of the second statement relies on the same argument, but using the following result instead of Lemma \ref{lem:relativeT}
\begin{thm}\cite{T}*{Theorem 7}
A  group  $G$ generated by a finite subset $S$, does not have the Haagerup Property if and
only if  there exist a number $C>0$ and a sequence of (symmetric) probability measures $\mu_n$ with finite support $W_n$ satisfying $d(e,W_n)\to \infty$,  such that every affine isometric action $(\HH,\pi,b)$ of $G$ satisfies
$\sum_{y\in W_n}\|b(y)\|^2\mu_n(y)\leqslant C\sum_{s\in S}\|b(s)\|^2.$
\end{thm}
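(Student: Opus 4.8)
The plan is to pass from affine isometric actions to conditionally negative definite (cnd) functions. By Schoenberg's theorem, a function $\psi\colon G\to[0,\infty)$ with $\psi(e)=0$ is cnd if and only if $\psi(g)=\|b(g)\|^2$ for some affine isometric action $(\HH,\pi,b)$, and under this correspondence $\sum_{s\in S}\|b(s)\|^2=\sum_{s\in S}\psi(s)$ and $\sum_y\|b(y)\|^2\mu(y)=\sum_y\psi(y)\mu(y)$. Hence the asserted inequality, quantified over all actions, is equivalent to the same inequality quantified over all cnd functions, and $G$ has the Haagerup property precisely when a \emph{proper} cnd function exists. So it suffices to prove: no proper cnd function exists if and only if there are $C>0$ and finitely supported symmetric probability measures $\mu_n$ with $d(e,\supp\mu_n)\to\infty$ satisfying $\sum_g\psi(g)\mu_n(g)\leqslant C\sum_{s\in S}\psi(s)$ for every cnd $\psi$. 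The backward (easy) direction is direct: if such $\mu_n$ exist and, for contradiction, $\psi$ is proper with $\sum_{s\in S}\psi(s)=1$, then properness gives $\psi(g)\geqslant\rho(|g|_S)$ for a proper $\rho$, so $\sum_g\psi(g)\mu_n(g)\geqslant\rho(d(e,\supp\mu_n))\to\infty$, contradicting the bound $C$; thus $G$ fails Haagerup.

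For the forward direction I would set up a minimax. Write $\mathcal E(\psi):=\sum_{s\in S}\psi(s)$. From $\|b(gh)\|\leqslant\|b(g)\|+\|b(h)\|$ one sees $\sqrt{\psi}$ is subadditive along words, whence $\psi(g)\leqslant|g|_S^2\,\mathcal E(\psi)$. Consequently $Y:=\{\psi\ \text{cnd}:\ \mathcal E(\psi)=1\}$ is a closed, convex subset of the compact product $\prod_g[0,|g|_S^2]$ (the cnd inequalities and the affine constraint $\mathcal E=1$ being closed under pointwise limits), hence compact in the topology of pointwise convergence. For each $r$ let $X_r$ be the convex set of finitely supported probability measures on $\{g:|g|_S\geqslant r\}$, and set $f(\mu,\psi):=\sum_g\psi(g)\mu(g)$, which is affine in each variable and, for fixed finitely supported $\mu$, a finite sum of evaluations, hence continuous on $Y$. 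Sion's minimax theorem then gives
\[
V(r):=\inf_{\mu\in X_r}\sup_{\psi\in Y}f(\mu,\psi)=\sup_{\psi\in Y}\inf_{\mu\in X_r}f(\mu,\psi)=\sup_{\psi\in Y}\ \inf_{|g|_S\geqslant r}\psi(g),
\]
the last equality because the infimum of the linear functional $f(\cdot,\psi)$ over probability measures is attained on point masses. Since $\{g:|g|_S\geqslant r\}$ shrinks as $r$ grows, $V(r)$ is non-decreasing, so $L:=\lim_{r\to\infty}V(r)\in[0,\infty]$ exists.

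Now I would run the dichotomy on $L$. If $L=\infty$, choose $r_k\to\infty$ and $\psi_k\in Y$ with $a_k:=\inf_{|g|_S\geqslant r_k}\psi_k(g)\geqslant 4^k$; then $\psi:=\sum_k 2^{-k}\psi_k$ converges pointwise, is a cnd function (finite total energy $\sum_k2^{-k}$), and satisfies $\psi(g)\geqslant 2^{-k}\psi_k(g)\geqslant 2^k$ whenever $|g|_S\geqslant r_k$, so $\psi$ is proper and $G$ has Haagerup. Contrapositively, if no proper cnd function exists then $L=:C<\infty$, and for each $n$ I pick $r_n\to\infty$ and, from the defining infimum of $V(r_n)\leqslant C$, a finitely supported probability measure $\mu_n$ on $\{|g|_S\geqslant r_n\}$ with $\sup_{\psi\in Y}f(\mu_n,\psi)\leqslant C+1$. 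Unnormalizing $Y$ gives $\sum_g\psi(g)\mu_n(g)\leqslant(C+1)\,\mathcal E(\psi)$ for every cnd $\psi$, with $d(e,\supp\mu_n)\geqslant r_n\to\infty$. Finally I symmetrize by replacing $\mu_n$ with $\tfrac12(\mu_n+\check\mu_n)$, where $\check\mu_n(g):=\mu_n(g^{-1})$; as $\psi(g)=\psi(g^{-1})$, both the inequality and the growth of the support are preserved.

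The main obstacle is the minimax step, i.e. justifying the interchange $\inf_\mu\sup_\psi=\sup_\psi\inf_\mu$. The delicate balancing is to choose the topology so that $Y$ is compact — this is exactly where the a priori bound $\psi(g)\leqslant|g|_S^2$ on the energy sphere is used — while simultaneously keeping $f(\mu,\cdot)$ continuous on $Y$, which forces $\mu$ to range over \emph{finitely} supported measures, and then checking the quasiconvexity/quasiconcavity and semicontinuity hypotheses of Sion's theorem (all automatic from bilinearity). An equivalent route, if one prefers to avoid minimax, is a direct Hahn–Banach separation of $0$ from the convex set of ``defects'' $\{\,(\sum_g\psi(g)\mu(g)-C\,\mathcal E(\psi))\,\}$ in a suitable sequence space. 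I should also confirm that the extracted $\mu_n$ may be taken finitely supported, which is automatic since the infimum defining $V(r)$ already ranges over $X_r$.
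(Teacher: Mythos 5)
Your argument is correct, but note that the paper itself contains no proof of this statement: it is imported verbatim from Tessera \cite{T}*{Theorem 7}, so the comparison is with that source rather than with anything in the present text. Your route --- translating actions into conditionally negative definite functions via the Schoenberg/GNS correspondence, compactifying the energy sphere $Y=\{\psi \text{ cnd}: \sum_{s\in S}\psi(s)=1\}$ inside $\prod_g[0,|g|_S^2]$ using the subadditivity bound $\psi(g)\leqslant |g|_S^2\,\mathcal{E}(\psi)$, applying Sion's minimax against finitely supported probability measures on $\{|g|_S\geqslant r\}$, and then running the dichotomy on $L=\lim_r V(r)$ --- is in substance the same convexity/compactness duality that underlies the proof in \cite{T}, where the interchange of quantifiers is achieved by Hahn--Banach separation rather than by minimax; the two tools are interchangeable here since everything is bilinear. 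Your summation trick in the $L=\infty$ branch, $\psi:=\sum_k 2^{-k}\psi_k$ with $\inf_{|g|_S\geqslant r_k}\psi_k\geqslant 4^k$, is exactly the cnd-function avatar of the direct-sum-of-rescaled-cocycles construction that this paper does use, in its proof of Lemma 4.1 (where $b:=\sum_n b_n/n$ is formed from actions with $\|b_n(y_n)\|^2\geqslant 2^n$), so your proof is well aligned with the techniques of both papers. A few points you leave implicit are genuinely routine but worth recording: $Y\neq\emptyset$ (take $b(g)=\delta_e-\lambda(g)\delta_e$ in $\ell^2(G)$, giving $\psi\equiv 2$ off $e$; note $G$ is infinite since finite groups have the Haagerup property, so $X_r\neq\emptyset$ as well); the degenerate case $\mathcal{E}(\psi)=0$ forces $\psi\equiv 0$ by your own bound $\psi(g)\leqslant|g|_S^2\,\mathcal{E}(\psi)$, so unnormalizing is legitimate; and for Sion one may equip the finitely supported measures with the finest locally convex topology, making every linear functional $\mu\mapsto\sum_g\psi(g)\mu(g)$ continuous, while compactness correctly sits on the $\psi$-side where the supremum is taken. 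With these observations your proof is complete and self-contained.
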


\section{Proof of Proposition \ref{prop:poincareLp}}\label{sec:proofPropext}
Let $1\leqslant p\leqslant \infty$.
Let $(\Omega,\mu)$ be a measured space and denote $\ell^p(Q,L^p)$ the vector space of functions from $Q$ to $L^p(\Omega,\mu)$ equipped with the $L^p$-norm 
$$\|f\|_{\ell^p}:=\sum_{g\in Q}\|f(g)\|_p^p,$$
for all $f\colon Q\to L^p$. 

The operators $M_S$ and $M_{q(H)}$ extend naturally to operators on $\ell^p(Q,L^p)$. Moreover,  since $H$ is normal, $M_S$ and $M_{q(H)}$ commute with each other. 

Since the pair $(G,H)$ has relative Property T, we can apply (\ref{eq:relativeT}) for $Y=H$: for all functions $f\in \ell^2(Q,L^2)$, this equation says that 
$$\sum_{g\in Q}\|(f-M_{q(H)}f)(g)\|^2\leqslant C \sum_{g\in Q, s\in S} \|f(g\bar{s})-f(g)\|^2.$$
An easy computation shows that 
$$ \sum_{g\in Q, s\in S} \|f(g\bar{s})-f(g)\|^2=2|S|\sum_{g\in Q}\langle (1-M_S)f(g),f(g)\rangle_{\ell^2}.$$ Therefore, we have
$$\|(1-M_{q(H)})f\|_{\ell^2}^2\leqslant 2C|S|\langle (1-M_S)f,f\rangle_{\ell^2},$$
which implies that, while restricted to the orthogonal subspace to $H$-invariant vectors, the operator $M_S$ has norm $<1$. Since $1-M_{q(H)}$ is the orthogonal projector onto this subspace, and since this projector commutes with $M_S$, this implies that the operator $M_S(1-M_{q(H)})$ has norm $<1$.
On the other hand, observe that $M_S(1-M_{q(H)})$ has norm $\leqslant 2$ for its actions on both $\ell^{\infty}(Q,L^{\infty})$ and  $\ell^{1}(Q,L^{1})$. Hence we obtain by interpolation  (see \cite{BL}) that for $n_0$ large enough (depending on $p$), the operator norm of $(M_S(1-M_{q(H)}))^{n_0}=M_S^{n_0}(1-M_{q(H)})$ for its action on $\ell^p(Q,L^{p})$ is $\leqslant 1-c_p$, for some $c_p>0$. It follows that  
$$\|h\|_{\ell^p}\leqslant  \frac{1}{c_p}\|h-M_S^{n_0}(1-M_{q(H)})h\|_{\ell^p},$$
for all $h\in \ell^p(Q,L^p)$.
Using that $1-M_{q(H)}$ is an involution, and applying the previous inequality to $h:=(1-M_{q(H)})f$, where $f\in \ell^p(Q,L^{p})$, we obtain
\begin{eqnarray*}
\|(1-M_{q(H)})f\|_{\ell^p} & \leqslant &  \frac{1}{c_p} \|(1-M_{q(H)})f-M_S^{n_0}(1-M_{q(H)})f\|_{\ell^p}\\
                              & = &  \frac{1}{c_p} \|(1-M_S^{n_0})(1-M_{q(H)})f\|_{\ell^p}\\
                           &  =  & \frac{1}{c_p}  \|(1-M_{q(H)})(1-M_S^{n_0})f\|_{\ell^p} \\
                           & \leqslant & \frac{2}{c_p}\|(1-M_S^{n_0})f\|_{\ell^p}.
\end{eqnarray*}
Then Proposition \ref{prop:poincareLp} follows, taking $C:=(2/c_p)^p$.

\section{Proof of Proposition \ref{prop:unifcurved}}

Given a measure space $(\Omega,\mu)$ and a Banach space $X$, denote by $L^2(\Omega,\mu,X)$ the (Banach) space of square summable measurable functions from $\Omega$ to $X$, equipped with the obvious norm.  The following class of Banach spaces was pointed out by V. Lafforgue who asked for a more concrete characterization: 
\begin{defn}\cite{P}*{\S 2}
A Banach space $X$ is \emph{uniformly curved} if it satisfies the following property: there is a function $\eps\to\Delta_X(\eps)$ tending to zero with $\eps > 0$ such that for all measured space $(\Omega,\mu)$, every operator $T\colon L^2(\Omega,\mu)\to L^2(\Omega,\mu)$ with $\|T\|\leqslant \eps$ that is simultaneously of norm $\leqslant 1$ on $L^1$ and on $L^{\infty}$ must be of norm 
$\leqslant \Delta(\eps)$ on $L^2(\Omega,\mu,X)$.
\end{defn}\label{def:unifcurved}
Observe that the proof of Proposition \ref{prop:poincareLp} immediately extends (for $p=2$) to Proposition \ref{prop:unifcurved}. For this reason, we shall not repeat the argument. More precisely, Definition \ref{def:unifcurved} replaces the interpolation argument that was used there, and provides in a sense the minimal requirement for this proof to work.

 Let us finally mention a remarkable fact proved by Pisier, showing that uniformly curved spaces are intimately related to complex  interpolation:  the subclass of uniformly curved spaces whose associated $\Delta_X$ can be taken of the form $\Delta_X(\eps)=\eps^{\delta}$ with $\delta>0$ coincides with the  
 a priori much smaller class of subquotients of $\theta$-Hilbert spaces (these are Banach spaces obtained by complex interpolation between a family of Banach spaces and a Hilbert space). He also provides a (more technical) characterization of uniformly curved Banach spaces \cite{P}*{Theorem 9.2}. 
 
\section{Main result: proof of Theorem \ref{ThmIntro:MainExample}, and other constructions}\label{sec:construc}

Let us start with a convenient notation. Let $V$ be a group, let $H$ be a group of automorphisms of $V$, and let $K$ be a group surjecting to $H$. Denote by $V\rtimes_H K$ the semi-direct product, where $K$ acts on $V$ via its quotient $H.$

Let $Q$ be the kernel of the morphism $\SL(2,\Z)\twoheadrightarrow\SL(2,\Z/2\Z)$. This group is   generated by 3 elements\footnote{Generators are
$ \left( \begin{array}{cc}
-1 & 0 \\
0 & -1 \\
\end{array} \right)$,
$ \left( \begin{array}{cc}
1 & 2 \\
0 & 1 \\
\end{array} \right)$,
$ \left( \begin{array}{cc}
1 & 0 \\
2 & 1 \\
\end{array} \right)$.}. 
Let $\pi\colon\F_{3}\twoheadrightarrow Q$ be the morphism mapping the standard generating subset $U:=\{u_1,u_2,u_{3}\}$ of $\F_{3}$ to some generating subset of $Q$.

We shall construct a box space for the following group 
$$G:=\Z^2\rtimes_{Q} \F_3,$$ 
where $Q$ acts in the standard way on $\Z^2$. A generating subset $S$ of cardinality $5$ for $G$ is the union of $U$ with standard basis of $\Z^2$.

Now, for all $n\geqslant 1$, the image $Q_n$ of $Q$ in $\SL(2,\Z/2^n\Z)$ has order $2^{3n-3}.$ Let $U_n:=\{u_1^{(n)}, u_2^{(n)}, u_{3}^{(n)}\}$ be the generating subset of $Q_n$ obtained by projecting $U$. Consider the projection $H_{3n-3 }\twoheadrightarrow Q_n$ of  Lemma \ref{lem:factorization} (with $m=3$) mapping $T_{3n-3}$ to $U_n$, and let 
$$G_n:=(\Z/2^n\Z)^2\rtimes_{Q_n} H_{3n-3}.$$
Let $K_n$ be the kernel of the surjective morphism  $p_n\colon G\twoheadrightarrow G_n$.  Observe that, setting $G_0:=\{e\}$ and $K_0:=G$, we have $\cap_{n=0}^{\infty}K_n=\{e\}$.
Therefore, $\sqcup_{n=0}^{\infty}(G_n,S_n)$, where $S_n$ is the projection of $S$, is a box space of $G$ associated to $S$ and to a nested sequence of finite index normal subgroups
$G=K_0\trianglerighteqslant K_1 \trianglerighteqslant \cdots$ with trivial intersection.

Our main result, Theorem \ref{ThmIntro:MainExample}, follows from the following more precise result.
Let $1\leqslant p<\infty$.
\begin{thm}\label{thm:SL2}
The sequence of Cayley graphs $(G_n,S_n)_{n\in\mathbb{N}}$ satisfies the two following properties
\begin{itemize}
\item There exists $C>0$ such that, for all $n\in \N$,  every function  $f$ from $G_n$ to an $L^p$-space satisfies: for every $h\in (\Z/2^n\Z)^2,$
$$\sum_{g\in G_n} \|f(gh)-f(g)\|^p \leqslant C \sum_{g\in G_n, s\in S_n} \|f(gs)-f(g)\|^p.$$ If $X$ is a uniformly curved Banach space, then this inequality holds for functions with values in $X$, and for some  $1\leqslant p<\infty$.
In particular, the sequence $(G_n,S_n)_{n\in \N}$ does not uniformly coarsely embed into any uniformly curved Banach space. 
\item On the other hand, if $(X_n)_{n\in\mathbb N}$ is a sequence of expander graphs and if $h_n\colon X_n\to (G_n,S_n)$ is a sequence of $1$-Lipschitz maps, there exist a constant $c>0$ and a sequence $y_n\in (G_n, S_n)$ such that the cardinality of $h_n^{-1}(\{y_n\})$ is at least $c|X_n|$ and its diameter is $\geqslant c\; \diam(X_n)$.  
\end{itemize}

\end{thm}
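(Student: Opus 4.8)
The plan is to establish the two bullet points of Theorem~\ref{thm:SL2} by realizing the group $G = \Z^2 \rtimes_Q \F_3$ as an instance where both my earlier propositions apply simultaneously. The first bullet is a non-embeddability statement; the second is the ``no expander'' statement. These pull in opposite directions, so the heart of the argument is that the specific semidirect-product structure of the $G_n$ lets both happen at once.

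\textbf{First bullet (relative Poincar\'e in $L^p$).} The plan is to apply Proposition~\ref{prop:poincareLp} with the normal subgroup $H := \Z^2 \trianglelefteqslant G$. The key input is that the pair $(G, \Z^2)$ has relative Property~T. This is the classical fact that $\Z^2 \rtimes \SL(2,\Z)$, and hence $\Z^2 \rtimes_Q \F_3$ (where $\F_3$ surjects onto the finite-index subgroup $Q \leqslant \SL(2,\Z)$), has relative Property~T with respect to its $\Z^2$ factor; this is the original Kazhdan–Margulis example. Here one must check that relative Property~T passes from $\Z^2 \rtimes Q$ up to $\Z^2 \rtimes_Q \F_3$ along the surjection $\F_3 \twoheadrightarrow Q$, which is immediate since an affine action of the larger group restricts to one of the smaller with the same $\Z^2$-orbits. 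Given this, Proposition~\ref{prop:poincareLp} applied to each finite quotient $G_n$ of $G$ (note $q_n(\Z^2) = (\Z/2^n\Z)^2$) yields exactly the stated relative Poincar\'e inequality with a uniform constant $C$, for every $h \in (\Z/2^n\Z)^2$. The ``in particular'' clause follows from Matou\v{s}ek's characterization recalled after \eqref{trueexpander}: since $(\Z/2^n\Z)^2$ is unbounded in $G_n$, a uniform coarse embedding into $L^p$ would contradict the inequality applied to $h$ with $d_{S_n}(e,h) \to \infty$.

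\textbf{Second bullet (no weakly embedded expander).} The plan is to apply Proposition~\ref{PropIntro:exactsequence} to the exact sequences
$$1 \to (\Z/2^n\Z)^2 \to G_n \to H_{3n-3} \to 1.$$
The two hypotheses of that proposition must be verified: that $N_n = (\Z/2^n\Z)^2$, with the metric induced from $G_n$, and the quotients $Q_n = H_{3n-3}$ (with the projected word metric) each coarsely embed into Hilbert space, uniformly in $n$. For the kernels, $N_n$ is a sequence of finite abelian groups whose balls have cardinality bounded by a function of the radius independent of $n$ (the induced metric is controlled by the ambient $\Z^2$ word metric), so Corollary~\ref{cor:abelian} gives a uniform coarse embedding. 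For the quotients, the crucial point is that $H_{3n-3} = \F_3 / \Gamma_{3n-3}(\F_3)$ with its standard generators, and by Theorem~\ref{thm:ags} the box space of $\F_3$ along $(\Gamma_n(\F_3))$ coarsely embeds into Hilbert space; thus $(H_{3n-3}, T_{3n-3})$ embeds uniformly, and pulling back along $H_{3n-3} \twoheadrightarrow Q_n$ does no harm. With both hypotheses in hand, Proposition~\ref{PropIntro:exactsequence} (after rescaling the given $1$-Lipschitz $h_n$ to $K$-Lipschitz with $K=1$) produces the constant $c>0$, the points $y_n$, a fiber $h_n^{-1}(\{y_n\})$ of size $\geqslant c|X_n|$, and diameter $\geqslant c\,\diam(X_n)$, which is precisely the claim.

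\textbf{The main obstacle} I expect is the uniform coarse embeddability of the quotients $(H_{3n-3}, T_{3n-3})$, which is exactly where Theorem~\ref{thm:ags} is indispensable and where the whole construction hinges on Lemma~\ref{lem:factorization}: the quotient of $G$ was engineered so that the ``$\SL_2$ part'' reads off as a quotient of $H_{3n-3}$, whose box-space embeds by \cite{AGS}. One has to be careful that the metric on $Q_n = H_{3n-3}$ used in Proposition~\ref{PropIntro:exactsequence} is the projection $T_n$ of the generating set, matching the box-space metric of Theorem~\ref{thm:ags}, and that the embedding of the $\F_3$ box space is \emph{uniform} over $n$ (which it is, being a single box space). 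The secondary subtlety is checking relative Property~T survives the passage to the free-group extension $\F_3 \twoheadrightarrow Q$, but this is routine since the relevant orbits live in $\Z^2$ and are unaffected.
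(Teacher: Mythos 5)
Your proposal follows the paper's own route essentially step by step: the first bullet from relative Property T of $(G,\Z^2)$ fed into the relative Poincar\'e machinery (you invoke the $L^p$ version, Proposition \ref{prop:poincareLp}, with the normal subgroup $H=\Z^2$, which is indeed what the $L^p$ statement requires, while the paper's text nominally cites Proposition \ref{PropIntro:RelativePoincare}), and the second bullet from Proposition \ref{PropIntro:exactsequence} applied to the exact sequences $1\to(\Z/2^n\Z)^2\to G_n\to H_{3n-3}\to 1$, with Lemma \ref{lem:abelian} and Corollary \ref{cor:abelian} handling the kernels and Theorem \ref{thm:ags} the quotients. That part of your argument is correct and is exactly the paper's.

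There is, however, a genuine gap in your justification of the key input for the first bullet, namely that $(\Z^2\rtimes_Q\F_3,\Z^2)$ has relative Property T. You claim this ``passes up'' from $(\Z^2\rtimes Q,\Z^2)$ along the surjection $p\colon \Z^2\rtimes_Q\F_3\twoheadrightarrow\Z^2\rtimes Q$, ``since an affine action of the larger group restricts to one of the smaller''. This is backwards: affine isometric actions of the quotient $\Z^2\rtimes Q$ pull back to actions of $G$ (which shows that relative T \emph{descends} to quotients), but an affine action of $G$ need not factor through $\Z^2\rtimes Q$ --- the kernel $\ker(\F_3\to Q)$ can act in an arbitrary way --- so relative Property T does not lift along surjections in general. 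For a concrete failure of the principle you invoke, take $G'=\Z^2\rtimes \SL(2,\Z)$, $G=\Z^2\rtimes(\SL(2,\Z)\times\Z)$ with the $\Z$ factor acting trivially, $p$ the map killing that factor, and $Y=\{(v,(e,n_v)):\,v\in\Z^2\}$ with $n_v$ unbounded: then $p|_Y$ is injective onto $\Z^2$ and $(G',\Z^2)$ has relative T, yet the translation action of $G$ on $\R$ through the homomorphism $G\to\Z$ gives unbounded $Y$-orbits. The reason relative T nonetheless holds for $(G,\Z^2)$ --- and the reason the paper cites the \emph{proof} of \cite{Burger}*{Proposition 1} rather than a formal transfer --- is that for an abelian kernel the relative T criterion depends only on the action of the acting group on the dual $\widehat{\Z^2}=\mathbb{T}^2$ (no almost-invariant probability measures concentrated away from the trivial character, equivalently no invariant mean on the relevant projective space), and this action factors through the non-amenable image $Q$. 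Alternatively, you could lift the free subgroup $\F_2\leqslant Q$ generated by the two unipotent generators to a subgroup $\widetilde{\F}_2\leqslant\F_3$ (possible since $\F_2$ is free), observe that $\Z^2\rtimes\widetilde{\F}_2\leqslant G$ has relative T with respect to $\Z^2$ by Burger's criterion, and use that relative T passes from a subgroup containing $Y$ to the ambient group. Either way, an actual argument about the dual action is needed; the formal lifting you assert is false as stated.
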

\begin{proof}
First, note that the pair $(G,\Z^2)$ has relative Property $T$: indeed, this follows from the proof of \cite{Burger}*{Proposition 1}.
Therefore, the first statement follows from Propositions \ref{PropIntro:RelativePoincare} and \ref {prop:unifcurved}. 

For every $r\in \N$, the ball of radius $r$ in $(G_n,S_n)$ has cardinality $\leqslant |S|^r$. Hence by Lemma \ref{lem:abelian} and Corollary \ref{cor:abelian}, the sequence of subgroups $(\Z/2^n\Z)^2$ of $G_n$, equipped with the induced distance $d_{S_n}$, uniformly coarsely embeds into a Hilbert space $\HH$.  
On the other hand, Theorem~\ref{thm:ags} provides such a coarse embedding into $\HH$ for the sequence $(H_{3n-3}, T_{3n-3})_{n\in\mathbb{N}}$. 
The second statement of the theorem therefore results from Proposition \ref{PropIntro:exactsequence}.
\end{proof}

In our construction, we use the fact that $Q_n$ is a 2-group in order to apply Lemma~\ref{lem:factorization}. 
One can also take $Q_n$ to be an $l$-group for any fixed integer $l>2$ and coarsely embeddable box spaces from~\cite{Kh} which generalize those of \cite{AGS}.
An analogue of Lemma~\ref{lem:factorization} holds in this case as well, and is proved in the same way.

\subsection{A construction based on generalized wreath products}\label{sec:wreath}
Using a result of \cite{CI}, we produce examples with expansion relative to an unbounded sequence of subsets, but not relative to any (unbounded) sequence of subgroups.
Our construction is based on generalized wreath products.

Given two countable discrete groups $A, G$, and a countable set $Z$ equipped with a $G$-action, the generalized wreath product $A\wr_Z G$ is the 
semi-direct product $\bigoplus_{Z}A\rtimes G.$
In \cite{CI}, it is proved that if $Q$ is a quotient of $G$ with Property $T$, then for every $Q$-orbit $W$ in $\bigoplus_{Z}A$, the pair $(A\wr_Z G,W)$ has relative Property T. More generally, if $Q$ does not have Haagerup property, then neither does $A\wr_Q G$.

Let us construct more examples of relative expanders which admit no any weakly embedded expanders.

We consider $Q:=\ker(\SL(3,\Z)\twoheadrightarrow \SL(3,\Z/2\Z))$, equipped with a finite generating subset $U$. 
 Our new example will be a box space of the group $G:=\Z\wr_{Q}\F_U$. For the moment, let us keep an arbitrary finitely generated group $A$ as the ``lamp group". 

Let $V$ be a generating subset of $A$.  
A generating subset of $G$ is then given by $S:=\{v_e, \; v\in V\}\sqcup U$, where the notation $v_g$ for $g\in Q$ means the element of $\bigoplus_{Q} A$ which equals $v$ at $g$ and $0$ everywhere else.
 
We consider the sequence of Cayley graphs $(G_n,S_n):=(A_n\wr_{Q_n}H_{8n-8},S_n),$ where 
\begin{itemize}
\item $Q_n$ is the image  of $Q$ in $\SL(3,\Z/2^n\Z)$, observe that $Q_n$ is a 2-group of order $2^{8n-8}$; 
\item $H_{8n-8}$ is given by Lemma \ref{lem:factorization};
\item $A_n$ is a finite quotient of $A$, and $V_n$ is the corresponding projection of $V$;
\item $S_n:=\{v_e, \; v\in V_n\}\sqcup U_n$, where $U_n$ is the projection of $U$ under the epimorphism $\F_U\twoheadrightarrow H_{8n-8}$.
\end{itemize}

Observe that $(G_n,S_n)$ is a box space of $(G,S)$, provided $(A_n,V_n)$ is a box space of $(A,V)$. For every $n$, consider the subset $$Y_n:=\{v_g, \; g\in Q_n, v\in V_n\}.$$ The sequence $Y_n$ is clearly unbounded, it is not even of bounded size.

Let us start with an easy but useful lemma.

\begin{lem}\label{lem:extension}
Let $H=N\rtimes R$ be a semi-direct product of groups, and let $T$ be a subset of $N$, invariant under conjugation by $R$. Then, the action of $N$ on its Cayley graph $(N,T)$ extends to an action of $H$ (by graph isomorphisms).
\end{lem}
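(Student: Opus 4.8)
The plan is to write down the extended action explicitly and then verify the two requirements—that it is a group action and that it preserves the edge relation—by direct computation. I would write a generic element of $H=N\rtimes R$ as a pair $(n_0,r)$ with $n_0\in N$, $r\in R$, so that multiplication reads $(n_1,r_1)(n_2,r_2)=(n_1\, r_1(n_2),\, r_1r_2)$, where $r(n)$ denotes the image of $n$ under the automorphism of $N$ determined by $r$. The vertex set of the Cayley graph $(N,T)$ is $N$ itself, with $x$ and $y$ adjacent precisely when $x^{-1}y\in T$. I would define the action of $(n_0,r)\in H$ on a vertex $x\in N$ by
$$(n_0,r)\cdot x := n_0\, r(x).$$

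First I would check this is a left action. A short computation gives $(n_1,r_1)\cdot\big((n_2,r_2)\cdot x\big)=n_1\,r_1\!\big(n_2\,r_2(x)\big)=n_1\,r_1(n_2)\,(r_1r_2)(x)$, which is exactly $\big((n_1,r_1)(n_2,r_2)\big)\cdot x$ by the multiplication rule above, while $(e,e)$ clearly acts trivially. Restricting to the subgroup $N=\{(n_0,e)\}$ recovers $x\mapsto n_0 x$, i.e.\ the original left-multiplication action of $N$ on $(N,T)$, so the action genuinely extends the given one.

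Next I would verify that each $(n_0,r)$ acts by a graph isomorphism. Bijectivity is automatic from the fact that this is a group action. For adjacency, suppose $x^{-1}y\in T$; then
$$\big((n_0,r)\cdot x\big)^{-1}\big((n_0,r)\cdot y\big)=r(x)^{-1}n_0^{-1}n_0\,r(y)=r\!\big(x^{-1}y\big),$$
and since $T$ is invariant under conjugation by $R$ we have $r(x^{-1}y)\in T$, so the images are again adjacent.

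There is no real obstacle here: the only place where a hypothesis is used is the last step, and the $R$-invariance of $T$ is precisely what guarantees that the automorphisms $r$ send edges to edges. The left translations by $N$ preserve adjacency for any $T$, so the entire content of the lemma is this compatibility between the conjugation action of $R$ and the generating set $T$.
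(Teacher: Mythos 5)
Your proof is correct and follows essentially the same route as the paper: you define the extended action by $(n_0,r)\cdot x = n_0\,r(x)$, verify the action axioms by the same direct computation, and use the $R$-invariance of $T$ to see that each element of $H$ preserves adjacency. The only cosmetic difference is that you check edge-preservation via the adjacency criterion $x^{-1}y\in T$, while the paper computes the image of an edge $\{n_0,n_0t\}$ directly; these are the same argument.
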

\begin{proof}
Denote the action of $R$ on $N$ by  $r\cdot n$, where $r\in R$ and $n\in N$.  
Therefore the product in the semi-direct product reads $(n_1,r_1)(n_2,r_2)=(n_1(r_1\cdot n_2),r_1r_2)$.
Define the action of $H$ on $N$ by $(n,r)\circ n_0:=n(r\cdot n_0).$ To see that this is a well-defined action, let us compute
\begin{eqnarray*}
((n_1,r_1)(n_2,r_2))\circ n_0 & = & n_1(r_1\cdot n_2)((r_1r_2)\cdot n_0) \\
                                        & = & n_1(r_1\cdot n_2)(r_1\cdot(r_2\cdot n_0)) \\
                                        & = & n_1(r_1\cdot(n_2(r_2\cdot n_0)))\\
                                        & = & n_1(r_1\cdot ((n_2,r_2)\circ n_0))\\
                                        & = & (n_1,r_1)\circ((n_2,r_2)\circ n_0).
\end{eqnarray*}
Moreover, for all $n_0,n,t\in N$ and $r\in R$, we have $$(n,r)\circ (n_0t)=n(r\cdot n_0)(r\cdot t).$$ Therefore, since $T$ is invariant under conjugation by $R$, this action also preserves the graph structure.
\end{proof}

We now provide examples of box spaces that do not coarsely embed into a Hilbert space but yet are not  expanders relative to any unbounded sequence of subgroups.  Note that we do not know whether the following sequence of graphs coarsely embeds 
into an $L^p$-space for large $p$.
\begin{thm}\label{thm:wreath}
The sequence of Cayley graphs $(G_n,S_n)_{n\in\mathbb{N}}$ satisfies the following properties.
\begin{itemize}
\item There exists $C>0$ such that, for all $n\in \N$,  every function  $f$ from $G_n$ to a Hilbert space satisfies: for all $y\in Y_n$,
\begin{equation}\label{eq:PoincareSubset}
\sum_{g\in G_n} \|f(gy)-f(g)\|^2 \leqslant C \sum_{g\in G_n, s\in S_n} \|f(gs)-f(g)\|^2.
\end{equation}
In particular, the sequence $(G_n,S_n)_{n\in \N}$ does not uniformly coarsely embed into a Hilbert space. 
\item $(G_n,S_n)_{n\in\mathbb N}$ does not weakly contain any expander.
\item Here we specify $A:=\Z$ with its standard generator, and $A_n:=\Z/2^n\Z$. Then, there exists a sequence of $1$-Lipschitz maps $f_n$ from $G_n$ to a Hilbert space such that for every sequence of subsets $\Sigma_n\subseteq G_n$ satisfying
\begin{equation}\label{eq:bounded}
\sup_{n\in \N}\sup_{k\in \Sigma_n}\inf_{g\in G_n}\|f_n(gk)-f_n(g)\|<\infty,
\end{equation}
there exist $i_0\in \N$ and $r>0$ such that $\Sigma_n$ lies in the $r$-neighborhood of $Y_n^{i_0}$.

\item The previous sequence $(f_n)_{n\in\mathbb N}$ is such that for every unbounded sequence of subgroups $(K_n)_{n\in\mathbb N}$, there exists a sequence $k_n\in K_n$ such that 
$$\inf_{g\in G_n}\|f_n(gk_n)-f_n(g)\|\to \infty$$ as $n\to\infty.$
\end{itemize}
\end{thm}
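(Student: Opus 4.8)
The plan is to argue by contradiction, feeding the subgroups $K_n$ into the previous item of the theorem and then exploiting the algebraic structure of $G_n=A_n\wr_{Q_n}H_{8n-8}$ together with the hypothesis $A_n=\Z/2^n\Z$. Since each $K_n$ is finite, the desired conclusion is equivalent to $D_n:=\sup_{k\in K_n}\inf_{g\in G_n}\|f_n(gk)-f_n(g)\|\to\infty$ (if $D_n\to\infty$ one picks $k_n$ attaining the supremum, and conversely). Suppose this fails; then after passing to a subsequence there is $M<\infty$ with $D_n\le M$, so $\Sigma_n:=K_n$ satisfies the boundedness condition \eqref{eq:bounded}. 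Applying the third item to $\Sigma_n=K_n$ produces $i_0\in\N$ and $r>0$ with $K_n\subseteq[Y_n^{i_0}]_r$ for all $n$ in the subsequence. I will show that this containment forces $|K_n|$ to be bounded independently of $n$, contradicting the unboundedness of $(K_n)$ in the sense relevant to relative expansion, namely $|K_n|\to\infty$ (the common size of the blocks in the induced partition into cosets).

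Next I would unwind $K_n\subseteq[Y_n^{i_0}]_r$ coordinatewise. Write $k=zw$ with $z\in Y_n^{i_0}$ lying in the base group $\bigoplus_{Q_n}A_n$ and $|w|_{S_n}\le r$, and record $k=(\psi,h)$ in base-times-quotient coordinates. Projecting to $H_{8n-8}$ is $1$-Lipschitz (as $U_n\subseteq S_n$), so the $H_{8n-8}$-component $h$ has $U_n$-length $\le r$; hence the image $\bar K_n\le H_{8n-8}$ is a subgroup contained in the ball $B_{U_n}(e,r)$, whence $|\bar K_n|\le N_0$ for a bound $N_0$ independent of $n$. The same decomposition $z=(\psi_z,e)$, $w=(\chi_w,h)$ gives $\psi=\psi_z+\chi_w$, so the total variation $\mathrm{tv}(\psi):=\sum_{g\in Q_n}\ell_{A_n}(\psi(g))$, where $\ell_{A_n}$ is the distance to $0$ in the standard Cayley graph of $A_n=\Z/2^n\Z$, satisfies $\mathrm{tv}(\psi)\le i_0+r$; in particular the support size and the lamp values of $\psi$ are bounded by $i_0+r$.

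The heart of the matter is to show that the base-intersection $L_n:=K_n\cap\bigoplus_{Q_n}A_n$ is trivial for $n$ large, and this is exactly where the choice $A_n=\Z/2^n\Z$ is used. If $0\neq\psi\in L_n$ had a nonzero coordinate $\psi(g_0)=2^{j}c'$ with $c'$ odd and $0\le j\le n-1$, then for a suitable multiple $m$ the element $m\psi\in L_n$ has $g_0$-coordinate equal to $2^{n-1}$, whose $\ell_{A_n}$-length is $2^{n-1}$; thus $\mathrm{tv}(m\psi)\ge 2^{n-1}>i_0+r$ once $n$ is large, contradicting the bound of the previous paragraph. Hence $L_n=\{e\}$ for large $n$, the projection $K_n\to\bar K_n$ is injective, and $|K_n|=|\bar K_n|\le N_0$, the desired contradiction.

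I expect the main obstacle to be precisely this last step: ruling out that a large subgroup hides inside the ``thin'' set $[Y_n^{i_0}]_r$. A priori $[Y_n^{i_0}]_r$ is itself unbounded, since it contains single lamps planted at arbitrarily distant sites of $Q_n$, so no naive diameter estimate suffices, and one must genuinely combine closure of $K_n$ under the group law with the $2$-power growth of cyclic subgroups of $\Z/2^n\Z$. A secondary subtlety, if one insists on the diameter formulation of unboundedness rather than the cardinality one, is that bounded-order but geometrically large subgroups (for instance an involution $\{e,(\psi,h)\}$ with $\psi$ a dipole supported at a far site) must also be treated; for these the conclusion follows from the explicit coarse-embedding ingredients already used in the third item, the embedding of $(H_{8n-8},U_n)$ from Theorem~\ref{thm:ags} and the abelian embedding of the base subgroup from Corollary~\ref{cor:abelian}, which assign large displacement to any element of large word length whose base part is not cheaply realizable.
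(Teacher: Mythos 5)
You address only the fourth item, deducing it from the third with the first three taken for granted; since the paper itself disposes of the last item in one sentence (``deduce it from the previous one''), that is a reasonable scope, and the core of your deduction is sound and usefully more detailed than the paper's. In particular: the decomposition of an element $k=(\psi,h)\in[Y_n^{i_0}]_r$ with $\mathrm{tv}(\psi)\leqslant i_0+r$ and $|h|_{U_n}\leqslant r$ is correct; the image $\bar K_n$ is then a subgroup of $H_{8n-8}$ inside the ball of radius $r$, hence $|\bar K_n|\leqslant N_0$ with $N_0$ independent of $n$; and your $2$-power trick (multiplying a nonzero $\psi\in K_n\cap\bigoplus_{Q_n}A_n$ so that some coordinate becomes $2^{n-1}$, forcing $\mathrm{tv}\geqslant 2^{n-1}>i_0+r$) correctly shows the base intersection is trivial for large $n$, whence $|K_n|\leqslant N_0$.

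There is, however, a genuine gap, and it sits exactly where you park it as a ``secondary subtlety''. In this paper ``unbounded'' for a sequence of subsets is \emph{metric} (not contained in balls of a fixed radius), not cardinality-theoretic, so bounding $|K_n|$ contradicts nothing; you must exclude subgroups of bounded cardinality that are metrically far. You name the right enemy --- an involution $\{e,(\psi,h)\}$ with $\psi$ a dipole at a far site $g_0\in Q_n$, $h^2=e$, $|h|_{U_n}\leqslant r$ --- but your proposed weapon fails against it: it is \emph{false} that the embeddings constituting $f_n=h_n\oplus j_n$ ``assign large displacement'' to such an element. Its displacement is at most about $\mathrm{tv}(\psi)+\gamma(|h|_{U_n})\leqslant 2+\gamma(r)$ no matter how far $g_0$ is, because $h_n$ only sees the $d_{Y_n}$-metric (this is precisely why the element passes the sieve of the third item); and the fourth item concerns the \emph{fixed} maps $f_n$, so you may not bring in the embedding of Corollary~\ref{cor:abelian}, which lives on the induced metric $d_{S_n}$ and is not part of $f_n$. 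No metric argument can rule these subgroups out --- they genuinely would be counterexamples --- so one must show they do not exist, which is a group-theoretic fact your proposal never invokes: since the subgroups $\Gamma_k(\F_U)$ are nested with trivial intersection and $\F_U$ is torsion-free, every nontrivial $h\in H_{8n-8}$ with $|h|_{U_n}\leqslant r$ has order tending to infinity with $n$ (a lift $w\in\F_U$ of length $\leqslant r$ with $w^d\in\Gamma_{8n-8}$ and $d\leqslant N_0$ forces $w^d=e$, hence $w=e$, once $\Gamma_{8n-8}$ misses the ball of radius $rN_0$). Combined with your two facts --- every nontrivial element of $K_n$ has nontrivial $H$-part (base intersection trivial) and has order $\leqslant|K_n|\leqslant N_0$ --- this yields $K_n=\{e\}$ for all large $n$ in your subsequence, which \emph{does} contradict metric unboundedness (read, as it must be for the ``$\to\infty$'' conclusion to make sense, as: $K_n$ eventually leaves every fixed ball). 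With this lemma inserted, your argument closes; without it, the proof only rules out sequences with $|K_n|\to\infty$, which is a strictly weaker statement than the theorem's.
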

\begin{proof}
Let $Y:=\{v_g, \; g\in Q, v\in V\}.$
The first statement is a consequence of the fact that the pair $(G,Y)$ has relative Property T~\cite{CI}. 

The second statement is proved in the same way as the second statement of Theorem~\ref{thm:SL2}.

Let us turn to the proof of the third statement. For every $n\in \N$, consider the equivariant embedding $\phi_n\colon \Z/2^n\Z\to \C$, where every $k\in \Z/2^n\Z$ is sent to $2^ne^{2ik\pi/2^n}$. In other words, this corresponds to taking the orbit of $2^n\in \C$ under the action of $\Z/2^n\Z$ by rotations. Equip $\Z/2^n\Z$ with the word length associated to $\{\pm1\}$, 
that we denote by $|\cdot|_{\pm 1}$. That is, for $k,l\in \Z/2^n\Z$, we have $|k-l|_{\pm 1}=\min \{|k'-l'|,\; k'=k\!\!\mod 2^n, l'=l\!\!\mod 2^n\}.$
One checks that for all $k,l\in \Z/2^n\Z$,
$$|k-l|_{\pm 1}\leqslant |\phi_n(k)-\phi_n(l)|\leqslant 2\pi |k-l|_{\pm 1}.$$
Taking a cartesian product of these embeddings, we obtain an embedding $h_n:\;\bigoplus_{Q_n}\Z/2^n\Z \to \C^{Q_n}$. Note that for all $g,g'\in \bigoplus_{Q_n}\Z/2^n\Z$, we have
\begin{equation}\label{eq:comparison}
(d_{Y_n}(g,g'))^{1/2}\leqslant  \|h_n(g)-h_n(g')\|\leqslant  2\pi d_{Y_n}(g,g').
\end{equation}
Moreover, since $Y_n$ is invariant under conjugation in $G_n$, the action of $\bigoplus_{Q_n}\Z/2^n\Z$ by translation on its Cayley graph extends to all of $G_n$ by Lemma \ref{lem:extension}. Actually, one easily checks that the action of $G_n$ on $\bigoplus_{Q_n}\Z/2^n\Z$ extends to an action by isometry on $\C^{Q_n}$. To see this, it is enough to look at elements in a generating set of $G_n$. On the one hand, 
for every $q\in Q_n$, the element $\delta_q\in \bigoplus_{Q_n}\Z/2^n\Z$, which equals $1$ on $q$ and $0$ elsewhere, is clearly the restriction of an isometry of $\C^{Q_n}$ which acts by rotation on the $\C$-factor indexed by $q$ and trivially on the other factors. On the other hand, any element of $H_{8n-8}$ acts on the direct sum  $\bigoplus_{Q_n}\Z/2^n\Z$ by shifting the index, so it is also the restriction of an isometry of   $\C^{Q_n}$.
Hence, $h_n$ extends to an embedding of all of $G_n$. More precisely, this embedding is the orbit of the function $v_n\colon\; Q_n\to \C$ which is constant equal to $2^n$. It follows from (\ref{eq:comparison}) that it is $2\pi$-Lipschitz in restriction to $\bigoplus_{Q_n}\Z/2^n\Z$. Since elements of $H_{8n-8}$ act trivially on $v_n$, we deduce that this extension of $h_n$ is $2\pi$-Lipschitz on all of $(G_n,S_n)$. 

On the other hand, we consider the uniform coarse embedding $j_n$ of $H_{8n-8}$ from \cite{AGS} into a Hilbert space $\HH$. We obtain a Lipschitz map $f_n:=h_n\oplus j_n$ from $G_n$ to the orthogonal direct sum of these Hilbert spaces.

Now let $(\Sigma_n)_{n\in \N}$ be a sequence of subsets of $G_n$ satisfying (\ref{eq:bounded}).  Uniform properness of  the $j_n$'s implies that $\Sigma_n$ lies at uniformly bounded Hausdorff distance -- say $\leqslant  r$ -- from  a subset $\Sigma_n'\subseteq \bigoplus_{Q_n}\Z/2^n\Z$. But by (\ref{eq:comparison}), $\Sigma_n'$ must lie in a ball of radius $i_0$ for $d_{Y_n}$, where $i_0>0$ is independent of $n$. 

Finally, it is easy to adapt the previous argument to show the last statement (or directly deduce it from the previous one).
\end{proof}

\subsection{A construction which admits a fibered coarse embedding}\label{sec:fibered}

Observe that all of our examples above are box spaces of groups with relative Property T. It follows that these sequences do not admit any fibered coarse embedding into a Hilbert space. 
We give now an alternative construction of a box space of a group with the Haagerup property, which therefore does admit a fibered coarse embedding.

Let us proceed as in the preceding construction, replacing $\SL(3,\Z)$ by $\SL(2,\Z)$.  The resulting sequence of groups $G_n:=A_n\wr_{Q_n}H_{k_n}$ is now a box space of $G:=\Z\wr_{Q}\F_U$,
which has the Haagerup property by \cite[Theorem 1.5]{CSV}, as $Q$, being a subgroup of $\SL(2,\Z)$, has the Haagerup property. On the other hand, the sequence $(Q_n)_{n\in\mathbb{N}}$  satisfies a uniform spectral gap property by a famous result of Selberg. It follows from the proof of \cite{CI}*{Theorem 3.1} that the sequence $(G_n)_{n\in\mathbb{N}}$ has relative Property $T$ with respect to $Y$, in restriction to unitary representations which are direct sums of representations which factor through some $G_n$.
Observe that this restricted version of relative Property T is exactly what was needed to prove the Poincar\'e inequality (\ref {eq:PoincareSubset}) in Theorem~\ref{thm:wreath}. Therefore, it follows that the sequence $(G_n)_{n\in\mathbb{N}}$ does not coarsely embed into a Hilbert space.

\section{Open problems}\label{sec:questions} 

Our relative expanders, admitting no weakly embedded expanders, provide a new  type of geometric and analytic behavior among spaces of bounded geometry.
Therefore, we expect a further impact on coarse and metric geometry as well as on operator algebra. In particular, the following natural problems remain open.

\begin{itemize}

\item Do the examples constructed in Sections~\ref{sec:wreath} and \ref{sec:fibered} satisfy the (reduced) coarse Baum-Connes conjecture?\smallskip  

\item Construct a sequence of finite graphs with bounded degree, and unbounded girth which does not coarsely embed into a Hilbert space and yet has no weakly embeded expander (in particular: a box space of $\mathbb F_m$).\smallskip

\item Construct a finitely generated group which does not coarsely embed into a Hilbert space and yet has no weakly embedded expander. This is stated as an open problem in \cite{GK}, see also
\cite{NYu}*{Section 5.7}.\smallskip

\item Does there exist a graph with bounded degree which does not coarsely embed into a Hilbert space and yet coarsely embeds 
into an $L^p$-space for $p$ large enough? Is it possible to construct such an example using sequences of finite Cayley graphs? 
Note that it is not shown yet whether our second construction (based on generalized wreath products) provides a space which does not coarsely embed into any $L^p$-space for $p>2$.\smallskip 

\item More generally, it would be interesting to know in which metric spaces (Banach spaces, CAT(0)-spaces, etc.) relative expanders can -- or cannot -- be coarsely embedded.  In particular, the methods of \cite{Mi} and of \cite{MN1} might be used to extend the class of metric spaces (not only Banach spaces) in which relative expanders with respect to unbounded normal subgroups cannot coarsely embed. 
\smallskip 

\item It results from (\ref{trueexpander}), that expanders (or even large subsets of expanders) do not weakly embed into $\ell^p$ for all $1\leqslant p<\infty$. On the other hand all of our examples do weakly embed into $\ell^2$ (just take a coarse embedding of $Q_n$). Therefore, it seems relevant to ask whether a sequence of finite graphs with uniformly bounded degree that does not weakly embed into $\ell^2$ is necessarily an expander. 
\end{itemize}

\appendix
\section{Our relative expanders do not satisfy ``expander like" Poincar\'e inequalities}

The purpose of this appendix is to strengthen the fact that our relative expanders are very far from actual expanders. More precisely, we show that our box spaces do not satisfy Poincar\'e inequalities which are -- in a certain sense -- similar to those satisfied by actual expanders.

The main technical result of this section is the following observation.

\begin{prop}\label{prop:fiberwise}
Let $(G_n, S_n)_{n\in\mathbb N}$ be a sequence of semi-direct products $G_n:=N_n\rtimes Q_n$ together with subsets $S_n\subseteq G_n$ of fixed cardinality such that 
\begin{itemize}
\item $S_n$ generates $G_n$;

\item $N_n$ is finite for all $n$;

\item if $\pi_n\colon G_n\twoheadrightarrow Q_n$ denotes the canonical projection, the groups $(Q_n)_{n\in\mathbb N}$ are quotients of 
some exact\footnote{that is, $C^\ast_{\rm red}(Q)$ is an exact $C^\ast$-algebra or $Q$ has Guoliang Yu's Property A~\cite{NYu}.} group $Q$ such that the sequence of Cayley graphs $(Q_n,\pi_n(S_n))_{n\in\mathbb N}$ converges in the 
space of marked groups to some Cayley graph of $Q$;

\item the sequence $(N_n)_{n\in\mathbb N}$ equipped with the metric induced by $d_{S_n}$ coarsely embeds into a Hilbert space $\HH$.
\end{itemize}
Then there exists a sequence of 1-Lipschitz maps $\phi_n:(G_n,S_n)\to \HH$ whose restriction to $N_n$ is a coarse embedding of $(N_n)_{n\in\mathbb N}$ into $\HH$.
\end{prop}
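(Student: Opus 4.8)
The plan is to build $\phi_n$ fibre-wise over the base $Q_n$, combining the given uniform coarse embedding of the fibres $N_n$ with a Property A ``partition of unity'' on $Q_n$ coming from exactness of $Q$. First I would normalize the embedding of $(N_n)$: by the renormalization argument recalled above (Schoenberg's theorem \cite{S} together with \cite{CTV}*{Lemma 3.11}) one may assume it is $1$-Lipschitz. Crucially, since only the restriction of $\phi_n$ to $N_n$ must be a coarse embedding, the base direction need only contribute a bounded-variation Lipschitz term; this is the hint to work throughout with \emph{unit-vector-valued} maps rather than with the unbounded embedding itself. Concretely I would use the standard reformulation of uniform coarse embeddability: $(N_n)$ embeds uniformly if and only if for every $R,\eps>0$ there exist, uniformly in $n$, maps $u_n\colon N_n\to \mathcal S(\HH)$ into the unit sphere with $\|u_n(x)-u_n(x')\|\leqslant \eps$ whenever $d_{S_n}(x,x')\leqslant R$, and with $\|u_n(x)-u_n(x')\|$ bounded below away from $0$ once $d_{S_n}(x,x')$ exceeds some $S_1(\delta)$.

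Next I would extract the base data from exactness of $Q$. Property A of $Q$ provides, for every $R,\eps>0$, an equivariant family $\xi\colon Q\to \mathcal S(\ell^2(Q))$ with $\operatorname{supp}\xi_q\subseteq B(q,S_2)$ for some $S_2=S_2(R,\eps)$ and $\|\xi_q-\xi_{q'}\|\leqslant\eps$ when $d(q,q')\leqslant R$. The essential point is that these functions have \emph{bounded support}, so only the structure of $Q$ at a bounded scale enters; the convergence of $(Q_n,\pi_n(S_n))$ to a Cayley graph of $Q$ in the space of marked groups then lets me transplant this family to $Q_n$ for $n$ large, yielding $\xi_n\colon Q_n\to\mathcal S(\ell^2(Q_n))$ with the same support radius $S_2$ and the same variation bound, uniformly in $n$. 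Vertex-transitivity of the Cayley graphs reduces the variation estimate to a neighbourhood of the identity, where the local isomorphism supplied by marked convergence applies. I stress that this only gives Property A \emph{at a bounded scale} --- all one can hope for, since the box space $(Q_n)$ need not be uniformly Property A when $Q$ is non-amenable --- and all that is needed here.

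I would then form, for each scale, the unit-vector map
\[
\Phi_n^{(R,\eps)}(g)=\sum_{p\in Q_n}\xi_n(\pi_n(g))(p)\,\delta_p\otimes u_n\big(p^{-1}\cdot n_g\big)\ \in\ \mathcal S\big(\ell^2(Q_n)\otimes\HH\big),
\]
where $g=(n_g,\pi_n(g))\in N_n\rtimes Q_n$ and $p^{-1}\cdot$ is the conjugation action of $Q_n$ on $N_n$. Two estimates drive the argument. Along a base-type generator only the $\xi_n$-factor moves, and since $u_n$ is a \emph{unit} vector the discrepancy is exactly $\|\xi_n(\pi_n(g)\pi_n(s))-\xi_n(\pi_n(g))\|\leqslant\eps$; using unit vectors here is precisely what prevents the unbounded norm of the embedding of $N_n$ from entering. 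Along a fibre-type generator $s=(x_s,e)$ the $\xi_n$-factor is unchanged and the two $u_n$-arguments differ by the \emph{conjugate} $(p^{-1}\pi_n(g))\cdot x_s$; because $p$ lies in the support ball of radius $S_2$, this conjugate has $d_{S_n}$-length at most $2S_2+1$, so choosing $R\geqslant 2S_2+1$ makes the discrepancy $\leqslant\eps$ by slow variation of $u_n$. The same support bound shows that on $N_n$, where $\pi_n(g)=e$, the arguments $p^{-1}\cdot x$ and $p^{-1}\cdot x'$ differ from $x,x'$ by at most $2S_2$ in $d_{S_n}$, so $\Phi_n^{(R,\eps)}$ still separates $d_{S_n}$-far points of $N_n$.

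Finally I would assemble $\phi_n=\bigoplus_k a_k\big(\Phi_n^{(R_k,\eps_k)}-\Phi_n^{(R_k,\eps_k)}(e)\big)$ over a sequence $R_k\to\infty$, $\eps_k\to 0$, with weights $a_k$ chosen so that $\sum_k a_k^2\eps_k^2<\infty$; after rescaling, the two generator bounds add up to give a $1$-Lipschitz map, while for $x,x'\in N_n$ a single scale adapted to $d_{S_n}(x,x')$ supplies a positive lower bound, and a standard choice of weights makes this lower bound proper and uniform in $n$, so that $\phi_n|_{N_n}$ is a coarse embedding. I expect the main obstacle to be the global $1$-Lipschitz estimate along fibre-type generators: the conjugation action of $Q_n$ on $N_n$ distorts the fibre metric without bound, and the whole construction is arranged so that the \emph{bounded support} of the Property A functions confines the relevant conjugates to a bounded $d_{S_n}$-ball while the \emph{unit normalization} keeps the base variation from being amplified by the size of the embedding of $N_n$. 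The remaining delicate point is the uniform transfer of Property A to the finite quotients, which rests squarely on the marked-group convergence hypothesis.
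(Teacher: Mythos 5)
Your strategy --- re-proving the Dadarlat--Guentner extension theorem fibrewise on $G_n$ itself, with Property A functions transplanted to $Q_n$ by marked convergence and tensored against unit-vector versions of the embedding of $N_n$ --- is genuinely different from the paper's proof, which instead invokes \cite{DG}*{Theorem 4.1} as a black box for the infinite groups $N_n\rtimes Q$, identifies the tubes $[N_n]_{r_n}\subseteq G_n$ with the corresponding subsets of $N_n\rtimes Q$ via marked convergence, and then makes the resulting embedding $\psi_n$ globally Lipschitz by multiplying by the radial cutoff $1-d_{S_n}(\cdot,N_n)/r_n$. Your transplantation of Property A to $Q_n$ and the final multi-scale summation are fine. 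The genuine gap lies in your two Lipschitz/separation estimates: your formula feeds the semidirect \emph{coordinate} $n_g$ of $g=(n_g,\pi_n(g))$ into $u_n$, but nothing in the hypotheses makes the coordinate projection $g\mapsto n_g$ compatible with the word metric $d_{S_n}$. Your case analysis tacitly assumes $S_n$ is \emph{adapted} to the splitting, i.e.\ every generator is either $(x_s,e)$ with $|x_s|_{S_n}=1$ or $(e,\sigma_s)$; only then does $w\in Q_n$ with $|w|_{Q_n}\leqslant S_2$ admit the \emph{section} lift $(e,w)$ of $S_n$-length $\leqslant S_2$, which is what your bound $|(p^{-1}\pi_n(g))\cdot x_s|_{S_n}\leqslant 2S_2+1$ actually requires (a short but arbitrary lift $(m,w)$ only gives $w\cdot x=m^{-1}\bigl((m,w)\,x\,(m,w)^{-1}\bigr)m$ with $m\in N_n$ uncontrolled). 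The proposition, however, allows arbitrary $S_n$, and then the argument breaks: take $G_n=\Z/2^n\Z\times\Z/2\Z$ (trivial action) with $S_n=\{(1,\bar 0),\,(2^{n-1},\bar 1)\}$. All hypotheses hold ($Q_n=Q=\Z/2\Z$ is exact, the marked convergence is trivial, and $(N_n,d_{S_n})$ is the sequence of cycles $\Z/2^n\Z$, which embeds uniformly), yet along the generator $t=(2^{n-1},\bar 1)$ one has $n_{gt}=n_g+2^{n-1}$, a point at $d_{S_n}$-distance $2^{n-1}$ from $n_g$ inside $N_n$. Orthogonality in the $\ell^2(Q_n)$ factor gives $\|\Phi_n^{(R,\eps)}(g)-\Phi_n^{(R,\eps)}(gt)\|\geqslant\|u_n(n_g)-u_n(n_g+2^{n-1})\|-\eps$, which is bounded below at every scale whose separation threshold is below $2^{n-1}$; since properness of $\phi_n|_{N_n}$ forces the weighted sum of exactly these lower bounds to diverge, your assembled $\phi_n$ sends edges of $G_n$ to pairs of points at arbitrarily large distance as $n\to\infty$. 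Note that this already happens with abelian $N_n$ and trivial action; for non-abelian $N_n$ the conjugation twist $p^{-1}\cdot n_g$ produces a second, independent instance of the same failure.

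The repair is to abandon coordinates and conjugation in favour of cosets and left translations, which is what Dadarlat--Guentner actually do. For each $p\in Q_n$ let $v_n^{(p)}$ be the unit-vector map on the tube $\pi_n^{-1}(B_{Q_n}(p,S_2+1))$ obtained by composing a closest-point projection onto the coset $\pi_n^{-1}(p)$, then the isometry $\pi_n^{-1}(p)\to N_n$ given by left translation, then $u_n$; set $\Phi_n(g)=\sum_{p}\xi_n(\pi_n(g))(p)\,\delta_p\otimes v_n^{(p)}(g)$, which is well defined because $\xi_n(\pi_n(g))(p)\neq 0$ forces $g$ to lie in the tube over $p$. The fibre estimate now uses only that $d_{S_n}(g,gs)=1$ and that left translations are isometries of $d_{S_n}$, so it holds for an arbitrary generator $s$ with no case split, and separation on $N_n$ survives because the projection moves points by at most $S_2$. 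With this replacement your scheme does prove the proposition as stated. As written, your argument is valid precisely when $S_n$ is adapted to the semidirect decomposition --- which covers the box spaces of Theorems \ref{thm:SL2} and \ref{thm:wreath} to which the paper applies Proposition \ref{prop:fiberwise}, but not the proposition in its stated generality.
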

The condition on $(Q_n)_{n\in\mathbb N}$ is satisfied if the sequence  $(Q_n,\pi_n(S_n))_{n\in\mathbb N}$ has girth tending to infinity as $n\to\infty$. 
Indeed, in this case, $(Q_n,\pi_n(S_n))_{n\in\mathbb N}$ converges in the space of marked groups to a Cayley graph of a free group. 
Thus, this proposition applies to all the box spaces we have constructed in \S \ref{sec:construc}. 

Before we prove the proposition, let us deduce our main motivation for this appendix, namely the following result (which also applies to the sequence of graphs we have constructed in \S \ref{sec:construc}). 

\begin{cor}\label{cor:last}
Let $(G_n, S_n)_{n\in\mathbb N}$ satisfy the hypothesis of Proposition \ref{prop:fiberwise} and assume, in addition, that the sequence $(Q_n)_{n\in\mathbb N}$ coarsely embeds into a Hilbert space $\HH$. Suppose that there exists a sequence of probability measures $\mu_n$ on $G_n$, supported on a sequence of subsets $A_n\subseteq G_n$ such that
\begin{itemize} 
\item $\mu_n$ is comparable to the uniform measure on $A_n$: $$\inf_{n,a\in A_n}|A_n|\mu_n(a)>0;$$

\item $G_n$ satisfies the following Poincar\'e inequality: there exists $C<\infty$ such that for all $n\in \N$ and every $1$-Lipschitz map $f_n\colon G_n\to \HH$,
\begin{equation}\label{eq:weakPoincare}
\sum_{a,b\in A_n}\|f_n(a)-f_n(b)\|^2\mu_n(a)\mu_n(b)\leqslant C.
\end{equation}
 
 \end{itemize}
Then $A_n$ is bounded:  there exists $R>0$ such that for all $n$, $A_n$ is contained in a ball of radius $R$ of $G_n$ with respect to $d_{S_n}$.
\end{cor}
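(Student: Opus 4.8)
The plan is to exploit that the comparability hypothesis forces $A_n$ to have \emph{uniformly bounded cardinality}, and then to bound the base and the fibre contributions to $\diam(A_n)$ separately, using the two coarse embeddings at our disposal. First I would note that $c_0:=\inf_{n,a\in A_n}\mu_n(a)>0$ together with $\sum_{a\in A_n}\mu_n(a)=1$ gives $|A_n|\leqslant 1/c_0$ for all $n$, and that $\mu_n(a)\mu_n(b)\geqslant c_0^2$ whenever $a\neq b$. Hence (\ref{eq:weakPoincare}) upgrades to the cardinality-free inequality
\[
\sum_{a,b\in A_n}\|f(a)-f(b)\|^2\leqslant C':=C/c_0^2
\]
valid for every $1$-Lipschitz $f\colon G_n\to\HH$; in particular $\|f(a)-f(b)\|\leqslant\sqrt{C'}$ for each individual pair. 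It therefore suffices to bound $d_{S_n}(a,b)$ uniformly over $a,b\in A_n$ and over $n$, testing this inequality against well-chosen $1$-Lipschitz maps.

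Second, I would bound the projections. Let $\psi_n\colon Q_n\to\HH$ be the coarse embedding provided by the extra hypothesis, with lower control function $\rho_Q$; after the standard regularisation (as in \cite{CTV}) we may assume $\psi_n$ is $1$-Lipschitz, and since $\pi_n$ is $1$-Lipschitz the composite $\psi_n\circ\pi_n$ is $1$-Lipschitz on $G_n$. Plugging it into the displayed inequality gives $\rho_Q\big(d_{Q_n}(\pi_n(a),\pi_n(b))\big)\leqslant\sqrt{C'}$, so $d_{Q_n}(\pi_n(a),\pi_n(b))\leqslant D:=\rho_Q^{-1}(\sqrt{C'})$ for all $a,b\in A_n$: the projection $\pi_n(A_n)$ has $d_{Q_n}$-diameter at most $D$, uniformly in $n$.

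Third, I would bound the fibres. Fix $a,b\in A_n$ and set $r:=\pi_n(b^{-1}a)$, so $|r|_{\pi_n(S_n)}\leqslant D$; lifting a geodesic word I choose $u\in G_n$ with $\pi_n(u)=r$ and $|u|_{S_n}\leqslant D$, and put $z:=b^{-1}au^{-1}\in N_n$. Then $a=bzu$, so $d_{S_n}(a,b)\leqslant|z|_{S_n}+D$ and it remains to bound $|z|_{S_n}=d_{S_n}|_{N_n}(e,z)$. Testing the displayed inequality against the $1$-Lipschitz map $\phi_n$ of Proposition \ref{prop:fiberwise} and using that $\phi_n$ is $1$-Lipschitz to pass from $a$ to $au^{-1}=bz$, I obtain
\[
\|\phi_n(bz)-\phi_n(b)\|\leqslant\|\phi_n(a)-\phi_n(b)\|+|u|_{S_n}\leqslant\sqrt{C'}+D.
\]
If the restriction of $\phi_n$ to the coset $bN_n$, i.e.\ the map $z\mapsto\phi_n(bz)$, is a coarse embedding of $(N_n,d_{S_n}|_{N_n})$ with a lower control function $\rho_N$ independent of $n$ and of $b$, then $|z|_{S_n}\leqslant\rho_N^{-1}(\sqrt{C'}+D)$, whence $d_{S_n}(a,b)\leqslant R:=\rho_N^{-1}(\sqrt{C'}+D)+D$; fixing any basepoint of $A_n$ then places $A_n$ inside a ball of radius $R$, as desired.

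The main obstacle is precisely this last \emph{coset-uniformity}: Proposition \ref{prop:fiberwise} only asserts that $\phi_n$ restricts to a coarse embedding of $N_n$ itself (the coset of $e$), not of an arbitrary coset $bN_n$. I expect to remove it by observing that $\phi_n$ is constructed $N_n$-equivariantly: its fibre component arises, via Lemma \ref{lem:extension}, from the genuinely equivariant embedding $h_n$ of $N_n$, with the $Q_n$-action on the target realised by isometries permuting coordinates (cf.\ (\ref{eq:comparison})). Consequently $\|\phi_n(bz)-\phi_n(b)\|$ depends on $z$ only through an isometric image of $h_n(z)$, so the same lower bound $\rho_N=\rho$ works on every coset. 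Making this equivariance explicit in the general setting of Proposition \ref{prop:fiberwise} — rather than only for the wreath-product examples where $h_n$ is literally a homomorphism into $\R^{Q_n}$ — is the step that will require the most care.
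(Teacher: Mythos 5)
Your opening observation is correct, but it should have been a red flag rather than a first step: if the printed hypothesis $\inf_{n,a\in A_n}\mu_n(a)=c_0>0$ is taken literally, then $|A_n|\leqslant 1/c_0$ and every pair of points of $A_n$ carries weight at least $c_0^2$, so the corollary is finished in two lines by testing (\ref{eq:weakPoincare}) against the single $1$-Lipschitz function $g\mapsto d_{S_n}(a_0,g)$ for a fixed $a_0\in A_n$ (with $\R$ viewed inside $\HH$): it gives $d_{S_n}(a_0,a)\leqslant \sqrt{C}/c_0$ for all $a\in A_n$, and neither coarse embedding is needed. The statement the paper actually proves --- and what ``comparable to the uniform measure'' is meant to say, a factor $|A_n|$ having been dropped from the displayed condition --- is comparability with the \emph{normalized counting} measure, $\mu_n(a)\asymp 1/|A_n|$, under which $|A_n|$ may be unbounded. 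Under that reading your first step fails: $\mu_n(a)\mu_n(b)$ is of order $1/|A_n|^2$, there is no pairwise bound $\|f(a)-f(b)\|\leqslant\sqrt{C'}$, and the entire pair-by-pair scheme collapses. The paper argues instead by proportions, exactly as in the proof of Proposition \ref{PropIntro:exactsequence}: when $|A_n|$ is unbounded, testing (\ref{eq:weakPoincare}) against $\psi_n\circ\pi_n$ shows that at least $c|A_n|$ points of $A_n$ lie in a single coset of $N_n$ (after a left translation, in $N_n$ itself), and then testing against the $\phi_n$ of Proposition \ref{prop:fiberwise} forces a definite proportion of pairs of these points to lie at uniformly bounded $d_{S_n}$-distance, which contradicts bounded geometry; the bounded-$|A_n|$ case is disposed of by the distance-function test above.

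Separately, the coset-uniformity problem you flag in your third step is a genuine gap in your argument, and the repair you propose is not available: the $\phi_n$ of Proposition \ref{prop:fiberwise} is not the map $h_n\oplus j_n$ of Theorem \ref{thm:wreath}; it is the Dadarlat--Guentner embedding $\psi_n$ of $N_n\rtimes Q$ from \cite{DG}, cut off outside $[N_n]_{r_n}$, and nothing in that construction is $N_n$-equivariant (the cut-off alone already destroys equivariance). The correct fix requires no equivariance at all: the word metric $d_{S_n}$ is left-invariant, so for each fixed $b$ the map $g\mapsto\phi_n(b^{-1}g)$ is still $1$-Lipschitz and can itself be tested in (\ref{eq:weakPoincare}); this replaces the pair $(a,b)$ by $(b^{-1}a,e)$ and the coset $bN_n$ by $N_n$ itself, where Proposition \ref{prop:fiberwise} does provide a uniform lower control. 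This left-translation trick is precisely what licenses the paper's phrase ``say $N_n$ itself'', and it is the idea your proposal is missing.
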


\begin{proof}[Proof of Corollary~\ref{cor:last}]
The arguments are very similar to those employed in the proof of Proposition \ref{PropIntro:exactsequence}, therefore we only sketch them.
First assume that the cardinality of $A_n$ is bounded uniformly over $n\in \mathbb N$, then we see that $A_n$ must be bounded by applying  
(\ref{eq:weakPoincare}) to the $1$-Lipschitz function on $G_n$ defined by $g\mapsto f_{n,a}(g)=d_{S_n}(a,g)$
for some $a$ in $A_n$.
We assume now that the cardinality of $A_n$ is unbounded. Pick a $1$-Lipschitz coarse embedding $\psi_n$ of  $Q_n$ into $\HH$. Applying the Poincar\'e inequality to $\psi_n\circ \pi_n$ yields as in the proof of Proposition \ref{PropIntro:exactsequence} that there exists $c>0$ such that at least $c|A_n|$ points of $A_n$ are sent to a single point in $Q_n$.  In other words, a positive proportion of $A_n$ belongs to a single coset of $N_n$, say $N_n$ itself. But then applying   (\ref{eq:weakPoincare}) to the sequence $(\phi_n)_{n\in \mathbb N}$ of Proposition \ref{prop:fiberwise} yields the required contradiction.
\end{proof}

\begin{proof}[Proof of Proposition \ref{prop:fiberwise}]
Since $Q$ is exact, it follows from the proof of \cite{DG}*{Theorem 4.1} that the sequence of semi-direct products $N_n\rtimes Q$ admits a coarse embedding 
$\psi_n\colon N_n\rtimes Q\to \HH$. Now since 
$(Q_n)_{n\in \mathbb N}$ converges to $Q$ there exists a sequence $r_n\to\infty$ as $n\to\infty$, such that balls of radius $r_n$ in $Q_n$ are isometric to the ball of radius $r_n$ of $Q$. 
It follows that the subsets $[N_n]_{r_n}:=\{x\in G_n, \;d(x,N_n)\leqslant r_n\}$ of $G_n$ are isometric to the corresponding subsets of $N_n\rtimes Q$. In particular, $\psi_n$
can be seen as a coarse embedding of $[N_n]_{r_n}$ into $\HH$. 

First, up to ``slowing down" $\psi_n$ as explained at the beginning of the proof of Proposition \ref{PropIntro:exactsequence}, one can assume that $\psi_n$ is $1$-Lipschitz and satisfies that $\psi_n(N_n)$ is contained in the ball of radius $r_n$ centered at the origin of $\HH$.
 
Now we define $\phi_n\colon G_n\to\HH$ by $$\phi_n(g):=\psi_n(g)\left(1-\frac{d_{S_n}(g, N_n)}{r_n}\right)$$ if $g\in [N_n]_{r_n}$, and $0$ otherwise. One easily checks that in restriction to $[N_n]_{r_n}$, $\phi_n$ is $2$-Lipschitz: this is because $\psi_n$ is $1$-Lipschitz and satisfies $\|\psi_n(g)\|\leqslant r_n$, while $1-d_{S_n}(g, N_n)/r_n$ is $1/r_n$-Lipschitz and has absolute value at most $1$. Moreover, for $g$ at distance $r_n$ from $N_n$, $\phi_n(g)=0$. Therefore, we deduce from the triangle inequality, using that the distance in $G_n$ is geodesic, that $\frac12\phi_n$ is $1$-Lipschitz everywhere. 
\end{proof}

\begin{bibdiv}
\begin{biblist}

\bib{AW}{article}{
   author={Akemann, C. A.},
   author={Walter, M. E.}, 
   title={Unbounded negative definite functions},
   journal={ Canad. J. Math.},
   volume={33},
   date={1981},
   number={4},
   pages={862--871},
}

\bib{A}{article}{
   author={Alon, N.},
   title={Eigenvalues and expanders, Theory of computing (Singer Island, Fla., 1984)},
   journal={Combinatorica},
   volume={6},
   date={1986},
   number={2},
   pages={83--96},
}

\bib{ALW}{article}{
author={Alon, N.},
   author={Lubotzky, A.},
   author={Wigderson, A.}, 
   title={Semi-direct product in groups and Zig-zag product in graphs: Connections and applications, 42nd IEEE Symposium on Foundations of Computer Science (Las Vegas, NV, 2001)},
   journal={IEEE Computer Soc., Los Alamitos, CA,},
   date={2001},
   pages={630--637},
}

\bib{AGS}{article}{
   author={Arzhantseva, G.},
   author={Guentner, E.},
   author={{\v{S}}pakula, J.},
   title={Coarse non-amenability and coarse embeddings},
   journal={Geom. Funct. Anal.},
   volume={22},
   date={2012},
   number={1},
   pages={22--36},
}

\bib{BLMN}{article}{
   author={Bartal, Y.},
   author={Linial, N.},
   author={Mendel, M.},
   author={Naor, A.},
   title={On metric Ramsey-type phenomena},
   journal={Ann. of Math.},
   volume={162},
   date={2005},
   number={2},
   pages={643--709},
   issn={0003-486X},
}

\bib{BCV}{article}{
   author={Bekka, M. E. B.},
   author={Cherix, P.-A.},
   author={Valette, A.},
   title={Proper affine isometric actions of amenable groups},
   conference={
      title={},
      address={Oberwolfach},
      date={1993},
   },
   book={
      series={London Math. Soc. Lecture Note Ser.},
      volume={227},
      publisher={Cambridge Univ. Press},
      place={Cambridge},
   },
   date={1995},
   pages={1--4},
}

\bib{BL}{book}{
    author= {Bergh, J.},
     author= {L\"ofstr\"om, J.},
    title = {Interpolation spaces. An introduction. Grundlehren der Mathematischen Wissenschaften},
     volume={223},
    year = {1976},
    publisher = {Springer-Verlag},
    place= {Berlin-New York},
    pages= {x+207},

}

\bib{Burger}{article}{
   author={Burger, M.},
   title={Kazhdan constants for $\SL(3,\Z)$},
   journal={J. Reine Angew. Math.},
   volume={413},
   date={1991},
   pages={36--67},


}

\bib{CWW}{article}{
   author={Chen, X.},
   author={Wang, Q.},
   author={Wang, X.},
   title={Characterization of the Haagerup property by fibred coarse
   embedding into Hilbert space},
   journal={Bull. Lond. Math. Soc.},
   volume={45},
   date={2013},
   number={5},
   pages={1091--1099},
}

\bib{CWYu}{article}{
   author={Chen, X.},
   author={Wang, Q.},
   author={Yu, G.},
   title={The maximal coarse Baum--Connes conjecture for spaces which admit
   a fibred coarse embedding into Hilbert space},
   journal={Adv. Math.},
   volume={249},
   date={2013},
   pages={88--130},
}

\bib{CI}{article}{
   author={Chifan, I.},
   author={Ioana, A.},
   title={On Relative property (T) and Haagerup's property},
   journal={Transactions of the AMS},
   volume={363},
   date={2011},
   pages={6407--6420},


}

\bib{C}{article}{
   author={Cornulier, Y.}, 
   title={Relative Kazhdan Property},
   journal={Ann. Sci. Ecole Norm. Sup.},
   volume={39},
   date={2006},
   number={2},
   pages={301--333},

}

\bib{CSV}{article}{
   author={de Cornulier, Y.},
   author={Stalder, Y.},
   author={Valette, A.},
   title={Proper actions of wreath products and generalizations},
   journal={Trans. Amer. Math. Soc.},
   volume={364},
   date={2012},
   pages={3159--3184},
}

\bib{CTV}{article}{
   author={Cornulier, Y.},
   author={Tessera, R.},
   author={Valette, A.},
   title={Isometric group actions on Hilbert spaces: growth of cocycles},
   journal={Geom. Funct. Anal.},
   volume={17},
   date={2007},
   number={3},
   pages={770--792},
}

\bib{DG}{article}{
   author={Dadarlat, M.},
   author={Guentner, E.},
   title={Constructions preserving Hilbert space uniform embeddability of
   discrete groups},
   journal={Trans. Amer. Math. Soc.},
   volume={355},
   date={2003},
   number={8},
   pages={3253--3275},
}

\bib{Finn}{article}{
   author = {{Finn-Sell}, M.},
    title = {Fibred coarse embeddings, a-T-menability and the coarse analogue of the Novikov conjecture},
  journal = {arXiv:1304.3348},
}

\bib{Gr_sp}{article}{
   author={Gromov, M.},
   title={Spaces and questions},
   note={GAFA 2000 (Tel Aviv, 1999)},
   journal={Geom. Funct. Anal.},
   date={2000},
   pages={118--161},
}

\bib{Gr_rw}{article}{
   author={Gromov, M.},
   title={Random walk in random groups},
   journal={Geom. Funct. Anal.},
   volume={13},
   date={2003},
   number={1},
   pages={73--146},
}

\bib{GK}{article}{
   author={Guentner, E.},
   author={Kaminker, J.}, 
   title={Geometric and analytic properties of
groups},
   journal={in: Noncommutative geometry, Ed. by S. Doplicher and R. Longo, Lecture Notes Math.},
   volume={1831},
   date={2004},
   pages={253--262},
}

\bib{HLS}{article}{
   author={Higson, N.},
   author={Lafforgue, V.},
   author={Skandalis, G.},
   title={Counterexamples to the Baum-Connes conjecture},
   journal={Geom. Funct. Anal.},
   volume={12},
   date={2002},
   number={2},
   pages={330--354},
}

\bib{HLW}{article}{
   author={Hoory, Sh.},
   author={Linial, N.},
   author={Wigderson, A.},
   title={Expander graphs and their applications},
   journal={Bull. Amer. Math. Soc. (N.S.)},
   volume={43},
   date={2006},
   number={4},
   pages={439--561},
}

\bib{JR}{article}{
   author={Johnson, W. B.},
   author={Randrianarivony, N. L.},
   title={$l_p\ (p>2)$ does not coarsely embed into a Hilbert space},
   journal={Proc. Amer. Math. Soc.},
   volume={134},
   date={2006},
   number={4},
   pages={1045--1050},
}

\bib{Kh}{article}{
   author={Khukhro, A.}, 
   title={Embeddable box spaces of free groups},
   journal={arXiv:1304.4784.},

}


\bib{Lub}{book}{
   author={Lubotzky, A.},
   title={Discrete groups, expanding graphs and invariant measures},
   series={Progress in Mathematics},
   volume={125},
   note={With an appendix by Jonathan D. Rogawski},
   publisher={Birkh\"auser Verlag},
   place={Basel},
   date={1994},
   pages={xii+195},
}

\bib{M}{article}{
   author={Matous\v{e}k, J.}, 
   title={On embedding expanders
into $\ell\sb p$ spaces},
   journal={Israel J. Math.},
   volume={102},
   date={1997},
   pages={189--197},

}

\bib{MaNe}{article}{
   author={Mayer, R.},
   author={Nest, R.}, 
   title={The Baum-Connes conjecture via localisation of categories},
   journal={Topology},
   volume={45},
   number={2},
   date={2006},
   pages={209--259},

}

\bib{Mi}{article}{
   author={Mimura, M.},
   title={Sphere equivalence, Banach expanders, and extrapolation},
   journal={arXiv:1310.4737},
}

\bib{MN}{article}{
   author={Mendel, M.},
   author={Naor, A.},
   title={Metric cotype},
   journal={Ann. Math.},
   volume={168},
   date={2008},
   number={1},
   pages={247--298},

}

\bib{MN1}{article}{
   author={Mendel, M.},
   author={Naor, A.},
   title={Nonlinear spectral calculus and super-expanders},
   journal={To appear in Inst. Hautes Etudes Sci. Publ. Math.},
    year = {2012},
}

\bib{NYu}{book}{
    author= {Nowak, P.},
     author= {Yu, G.},
    title = {Large scale geometry},
    pages = {xiv + 189},
    year = {2012},
    publisher = {Z\"urich: European Mathematical Society (EMS)},
    }

\bib{O1}{article}{
    author= {Ostrovskii, M.},
    title = {Coarse embeddability into Banach spaces},
    year={2009},
journal={Topology Proceedings},
volume={33},
pages={163-183},

}

\bib{O}{book}{
    author= {Ostrovskii, M.},
    title = {Metric Embeddings.
Bilipschitz and Coarse Embeddings into Banach Spaces},
    year = {2013},
    publisher = {De Gruyter Studies in Mathematics},
    volume={49},
    place={Berlin/Boston},
    }
 
  \bib{OOY}{article}{
   author={Oyono-Oyono, H.},
   author={Yu, G.},
   title={$K$-theory for the maximal Roe algebra of certain expanders},
   journal={J. Funct. Anal.},
   volume={257},
   date={2009},
   number={10},
   pages={3239--3292},
}

\bib{P}{article}{
   author={Pisier, G.},
   title={Complex interpolation between Hilbert, Banach and Operator Spaces},
   journal={Mem. Amer. Math. Soc.},
   volume={208},
   number={978},
   pages={vi+78},
   date={2010},
}

\bib{Ran}{article}{
   author={Randrianarivony, N.L.}, 
   title={Characterization of quasi-Banach spaces which coarsely embed into a Hilbert space},
   journal={Proc. Amer. Math. Soc.},
   volume={134},
   date={2006},
   pages={1315--1317},

}

\bib{R}{book}{
   author={Roe, J.},
   title={Lectures on coarse geometry},
   series={AMS University Lecture Series},
   volume={31},
   date={2003},
}

\bib{S}{article}{
  author={Schoenberg, I.J.},
  title={Metric spaces and positive definite functions},
  journal={Trans. Amer. Math. Soc.},
   volume={44},
  date={1938},
  pages={522--553},

}

\bib{T}{article}{
   author={Tessera, R.},
   title={Coarse embeddings into a Hilbert space, Haagerup property and
   Poincar\'e inequalities},
   journal={J. Topol. Anal.},
   volume={1},
   date={2009},
   number={1},
   pages={87--100},
}

\bib{Tu}{article}{
   author={Tu, J. L.},
   title={La conjecture de Baum-Connes pour les feuilletages moyennables.},
   journal={ K-Theory},
   volume={17},
   date={1999},
   number={3},
   pages={215--264},
}

\bib{WYu1}{article}{
   author={Willett, R.},
   author={Yu, G.},
   title={Higher index theory for certain expanders and Gromov monster
   groups, I},
   journal={Adv. Math.},
   volume={229},
   date={2012},
   number={3},
   pages={1380--1416},
}

\bib{WYu2}{article}{
   author={Willett, R.},
   author={Yu, G.},
   title={Higher index theory for certain expanders and Gromov monster
   groups, II},
   journal={Adv. Math.},
   volume={229},
   date={2012},
   number={3},
   pages={1762--1803},
}

\bib{W}{book}{
   author={Wojtaszczyk, P.},
   title={Banach spaces for analysts},
   series={Cambridge Studies in Advanced Mathematics},
   volume={25},
   publisher={Cambridge University Press},
   place={Cambridge},
   date={1991},
   pages={xiv+382},
}

\end{biblist}
\end{bibdiv}

\bigskip
\footnotesize

\end{document}